\newtheorem{theorem}{Theorem}
\newtheorem*{theorem*}{Theorem}
\newtheorem*{proposition*}{Proposition}
\newtheorem{lemma}{Lemma}
\newtheorem{corollary}{Corollary}
\newtheorem{definition}{Definition}
\newtheorem{proposition}{Proposition}
\theoremstyle{remark}
\newtheorem{remark}{Remark}
\newcommand{\e}{\mathrm{e}}
\renewcommand{\d}{\mathrm{d}}
\newcommand{\R}{\mathbb{R}}
\newcommand{\E}{\mathcal{E}}
\newcommand{\eps}{\varepsilon}
\newcommand{\N}{\mathbb{N}}
\newcommand{\f}[1]{\footnote{\tbl{#1}}}
\newcommand{\s}{\hspace{0.5pt}}
\newcommand{\supp}{\mathop{\rm supp}}
\newcommand{\tildeR}{\widetilde{\mathcal{R}}}
\newcommand{\dualH}{\widetilde{H}}
\newcommand{\F}{\mathcal{F}}		
\renewcommand{\;}{\s ;\s }
\newcommand{\ccdot}{\,\cdot\,}
\newcommand{\p}{\partial}
\newcommand{\tbl}[1]{\textcolor{blue}{#1}}
\newcommand{\norm}[1]{\lVert #1 \rVert}
\newcommand{\abs}[1]{\lvert #1 \rvert}
\begin{document}
\title{Uniqueness and stability of an inverse problem for a semi-linear wave equation}

\author[1]{Matti Lassas}

\author[2]{Tony Liimatainen}

\author[3]{Leyter Potenciano-Machado}

\author[4]{Teemu Tyni}

\affil[1,4]{Department of Mathematics and Statistics, University of Helsinki, Helsinki, Finland}
\affil[2,3]{Department of Mathematics and Statistics, University of Jyv\"askyl\"a, Jyv\"askyl\"a, Finland}



\maketitle

\begin{abstract}

We consider 
the recovery of a potential associated with a semi-linear wave equation on $\R^{n+1}$, $n\geq 1$.
%
We show that an unknown potential $a(x,t)$, supported in $\Omega\times[t_1,t_2]$, of the wave equation $\square u +a u^m=0$ can be recovered in a H\"older stable way from the map $u|_{\p \Omega\times [0,T]}\mapsto 
\langle\psi,\p_\nu u|_{\p \Omega\times [0,T]}\rangle_{L^2(\p \Omega\times [0,T])}$. This data is equivalent to the inner product of the Dirichlet-to-Neumann map with a measurement function $\psi$.
  We also prove similar stability result for the recovery of $a$ when there is noise added to the boundary data. The method we use is constructive and it is based on the higher order linearization. As a consequence, we also get a uniqueness result. We also give a detailed presentation of the forward problem for the equation $\square u +a u^m=0$.

\end{abstract}

In this paper we study an inverse boundary value problem for a non-linear wave equation. The inverse problems we study are the uniqueness and stability of recovering an unknown potential $a\in C_c^\infty(\Omega\times \R)$ of the non-linear wave equation
\begin{equation}\label{eq:intro_wave-eq}
\begin{cases}
\square\s u(x,t) + a(x,t)\s u(x,t)^{m}   = 0 &\text{in } \Omega\times[0,T],\\
\qquad\qquad\qquad\qquad\qquad \ \s u=f &\text{on } \p\Omega\times[0,T], \\
\quad \ \ \ \ \, \s u\big|_{t=0} = 0,\quad \partial_t u\big|_{t=0} = 0 &\text{on } \Omega
\end{cases}
\end{equation}
from the Dirichlet-to-Neumann map (DN map) of the equation.  Here $m\geq 2$ is an integer, $\Omega$ is an open subset of $\R^n$, $T>0$, and $\square$ is the standard wave operator $\p_t^2-\Delta$ in $\R^{n+1}$. We assume 
that the potential $a=a(x,t)$ can depend on the time variable $t$. 
%


Inverse problems for Equation~\eqref{eq:intro_wave-eq} are natural counterparts to the widely studied inverse problems for the linear operator $\square u + a\s u$. We refer to~\cite{KKL01} for inverse problems for linear wave equations. Equations of the type~\eqref{eq:intro_wave-eq} arise for example in quantum mechanics in the context of the Klein-Gordon equation.

We will show that the boundary value problem for Equation~\eqref{eq:intro_wave-eq} has a unique small solution $u$ for sufficiently small boundary data $f\in H^{s+1}(\p \Omega \times [0,T])$, where $s\in \N$ and $s+1>(n+1)/2$. Precisely this means that there is $\eps>0$ such that whenever $\norm{f}_{H^{s+1}(\p \Omega \times [0,T])}\leq \eps$ , there is a unique solution $u_f$ to \eqref{eq:intro_wave-eq} with sufficiently small norm in the \emph{energy space} $E^{s+1}$
\[
 E^{s+1}=\bigcap_{0\leq k \leq s+1}C^k([0,T]\; H^{s+1-k}(\Omega)).
\]
Here $H^{s+1}$ is the standard Sobolev space.  We will call $u_f$ the unique small solution. 
The DN map $\Lambda$ is then defined by using the unique small solution by the usual assignment,
	\begin{align*}
	\Lambda: H^{s+1}(\p \Omega \times [0,T])\to H^{s}(\p \Omega \times [0,T]), \ \  f \mapsto \p_{\nu} u_f|_{\p \Omega \times [0,T]}.
	\end{align*}
	Here $\p_{\nu}$ denotes the normal derivative on the boundary $\p \Omega \times [0,T]$. See Section \ref{sec:forward-problem} for more details on well-posedness.

Let us briefly mention some results on inverse problems for linear equations. In the case where the underlying equation is linear {and elliptic}, a standard example is from the {pioneering work} of Calder\'on \cite{Ca80}, known nowadays as Calder\'on's inverse problem.
This problem was solved in  the fundamental papers by  Sylvester and Uhlmann~\cite{SyUh87}, in the three and higher dimensional case,
and Nachman~\cite{N96}  and Astala and P\"aiv\"arinta \cite{AP}, in the two dimensional case.
For a gentle introduction to Calder{\'o}n's problem and related topics, see for instance \cite{GT13,KS14,Uh13} and the references therein. 
Numerical techniques for the problem
are discussed in \cite{Siltanen2,Siltanen1}. 
 For the linear hyperbolic equation,  the results on uniqueness and their corresponding quantitative versions have been studied using Carleman estimates and the complex geometric optics, see  \cite{BK81,IY01,NSU88}.

Uniqueness results for inverse problem for the wave equation with vanishing initial data are obtained using the boundary control method, originated by Belishev and Kurylev~\cite{Be87,BeKu92},
 that combines  the wave propagation and controllability results, see also \cite{KKL01}.
The boundary control method  allows also an effective numerical algorithm \cite{deHoop}. Recent geometrical results 
on determining Riemannian manifolds with partial data or with general operators
are considered in \cite{AKKLT,Helin,Isozaki,KKLO,KrKL,KOP,Lassas,LO}.
The boundary control method has been applicable only in the cases where the coefficients of the equation
are time-independent, or when the lower order terms are real analytic in time variable \cite{Esk}.

Inverse problems for linear wave equations with lower order terms depending on the time variable have been considered 
in \cite{FIKO,St,SY}. These method apply propagation of singularities along bicharacteristics
 to determine the integrals of the coefficients along rays. These results are closely related to the methods used
 in this paper with the significant difference that in these results one has to assume that the complete Dirichlet-to-Neumann operator or a scattering operator is known.

A recent observation by Kurylev, Lassas and Uhlmann \cite{KLU18} is that a non-linearity in the studied equation can be used as a beneficial tool in a corresponding inverse problem. By exploiting the non-linearity, some still unsolved inverse problems for linear hyperbolic equations have been solved for their non-linear counterparts. For the scalar wave equation with a quadratic non-linearity, they in~\cite{KLU18} proved that local measurements determine the global topology, differentiable structure and the conformal class of the metric $g$ on a globally hyperbolic $3+1$-dimensional Lorentzian manifold. Following this observation, there has been 
a surge of interesting results for inverse problems for non-linear equations. The authors of~\cite{LUW18} studied inverse problems for general semi-linear wave equations on Lorentzian manifolds, and in \cite{LUW17} they studied analogous problem for the Einstein-Maxwell equations. Recently, inverse problems for non-linear equations using the non-linearity as a tool, have been studied in   
~\cite{AsZh17, CaNaVa19, Chen2019,CLOP2020,dH2019, dH2020,FeOk20,FO, KaNa02,  KrUh19, KrUh20,KLOU2014, LaUhYa20, LLLS19a, LLLS19b, OSSU,SunUh97, UhWa18,WZ2019}.
The works mentioned above use the so-called \emph{higher order linearization} method, which we will explain later.
In this work we continue to use the non-linearity as a tool to prove a stability estimate for the described inverse problem for Equation~\eqref{eq:intro_wave-eq}.
%
%
 Our main results are a H\"older stability estimate for the inverse problem of the Equation~\eqref{eq:intro_wave-eq} and a constructive way to approximate $a$ in the presence of additive noise.
%
%
The noise is modelled by a bounded mapping $\mathcal{E}: H^{s+1}(\Sigma) \to H^s(\Sigma)$.
We do not assume that $\mathcal{E}$ is a linear mapping.
%
The main idea is to use the non-linearity to approximate ``virtual sources" which are multiplied by the unknown potential.
%

%

	We present our main results next. We denote the \emph{lateral boundary} $\p\Omega\times[0,T]$ by
\[
 \Sigma=\p\Omega\times[0,T]
\]
or by $\Sigma^T$ if we wish to emphasize the corresponding time interval.  
Due to the finite propagation speed of solutions to the wave equation, there are natural limitations on the areas of $\R^{n+1}$ where we can obtain information in the inverse problem. Regarding this, we introduce a technical notion of \emph{admissible potentials}, which will be useful in presenting our results. 

Let us denote
\begin{align}\label{def:d}
d&:= 2\,\inf\left\{ r>0 \mid \Omega\subset B_r(x),\text{ for some } x\in\R^n \right\},
\end{align}
where $B_r(x)$ is the ball of radius $r$ centered at $x\in\R^n$.
By Jung's theorem, $\mathrm{diam}(\Omega)\leq d \leq \mathrm{diam}(\Omega) \sqrt{2n/(n+1)}$.
We assume that 
$$T\geq 2d+2\lambda$$
for some given $\lambda>0$. Let then 
\begin{equation}\label{def:t1t2}
\begin{split}
t_1 = d+\lambda \quad\text{and}\quad t_2 = T-d-\lambda.
\end{split}
\end{equation}
The parameter $\lambda$ will be used to build small neighbourhoods of the times $t=0$ and $t=T$. Its precise value is not important.
\begin{definition}[Admissible potentials]\label{def:admissible}
Given $s\geq 0$ and $L>0$, the class of admissible potentials 
is defined as the set consisting of all the functions $a\in C^{s+1}_c(\Omega\times \R)$ satisfying the following conditions
\[
\norm{a}_{C^{s+1}}\leq L,
\]
\[
\supp (a)  \Subset \Omega\times [t_1,t_2].
\]
Note that $[t_1,t_2]\subset[0,T]$. 
\end{definition}

This definition is motivated by the need to reach any point in the support of $a$ by sending waves from $\Sigma$, and that the corresponding measurements can be detected on $\Sigma$. If $a$ satisfies the conditions of the definition, we say that $a$ is \emph{admissible} with implied constants $L$ and $\lambda$.

Our first result is the following

\begin{theorem}[Uniqueness]\label{thm:uniqueness}
Let $\Omega\subset \R^n$ be a bounded domain with a smooth boundary. Let $m\geq 2$ be an integer and $s+1>(n+1)/2$.
There is a measurement function $\psi\in L^2(\Sigma)$ such that the following is true:
 Assume that $a\in C_c^{s+1}(\Omega\times \R)$ is admissible.  Let $u_f$ be the solution to~\eqref{eq:intro_wave-eq} for small enough $f\in H^{s+1}(\Sigma)$.
 
Then the real-valued non-linear map
\begin{equation}
 \lambda_{\psi}: f\mapsto \langle \psi,\p_\nu u_f\rangle_{L^2(\Sigma)}
\end{equation}
determines $a(x,t)$ uniquely.
\end{theorem}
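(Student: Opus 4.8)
The plan is to run the higher order linearization of the data. I would probe with boundary data $f=\sum_{j=1}^m \eps_j f_j$ depending on small parameters $\eps_1,\dots,\eps_m$, write $u_{\eps}$ for the corresponding small solution, and expand in $\eps$. The first-order coefficients $v_j:=\p_{\eps_j}u_{\eps}|_{\eps=0}$ solve the homogeneous linear problem $\square v_j=0$ in $\Omega\times[0,T]$, $v_j|_{\Sigma}=f_j$, with vanishing Cauchy data at $t=0$. Since $m\ge 2$, the nonlinearity $a\s u^m$ contributes nothing below order $m$, and applying $\p_{\eps_1}\cdots\p_{\eps_m}|_{\eps=0}$ to the equation shows that $w:=\p_{\eps_1}\cdots\p_{\eps_m}u_{\eps}|_{\eps=0}$ solves
\[
\square w=-m!\,a\,v_1\cdots v_m\quad\text{in }\Omega\times[0,T],\qquad w|_{\Sigma}=0,\quad w|_{t=0}=\p_t w|_{t=0}=0.
\]
Applying the same mixed derivative to $\lambda_{\psi}$ produces the measured quantity $\langle\psi,\p_\nu w\rangle_{L^2(\Sigma)}$.

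Next I would convert this into an interior integral identity. Let $v_0$ solve the backward linear wave equation $\square v_0=0$ with zero Cauchy data at $t=T$ and boundary value $v_0|_{\Sigma}=\psi$. Green's identity for $\square$ on $\Omega\times[0,T]$, using $w|_{\Sigma}=0$, the vanishing Cauchy data of $w$ at $t=0$, and the vanishing terminal data of $v_0$ at $t=T$, makes every boundary and time-slice term drop except the spatial pairing, giving
\[
\langle\psi,\p_\nu w\rangle_{L^2(\Sigma)}=m!\int_{\Omega\times[0,T]}a\,v_0\,v_1\cdots v_m\,\d x\,\d t.
\]
Thus $\lambda_\psi$ determines the integrals $\int a\,v_0\,v_1\cdots v_m$ for the single fixed backward wave $v_0$ and for all forward waves $v_1,\dots,v_m$.

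For the recovery I would set $b:=a\,v_0$ and choose the waves to carry prescribed null phases. Taking $v_j$ to behave on $\supp(a)$ like the plane-wave solutions $\e^{\,\i\rho(\xi_j\cdot x-|\xi_j|t)}$, the product $v_1\cdots v_m$ carries the phase $\rho\bl\sum_j\xi_j\cdot x-\sum_j|\xi_j|t\br$. As the $\xi_j\in\R^n$ vary, the frequency $(\eta,\tau)=\rho\bl\sum_j\xi_j,-\sum_j|\xi_j|\br$ sweeps out the open timelike cone $\{|\tau|>|\eta|\}$, the opposite sign of $t$ in the phases covering the other half. Hence $\lambda_\psi$ determines $\hat b$ on an open set. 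Since $a$, and therefore $b$, is compactly supported, $\hat b$ is the restriction of an entire function, so its values on an open set determine it everywhere by analytic continuation; inverting the Fourier transform recovers $b$, and taking $v_0$ of unit modulus (so $v_0\neq 0$) gives $a=b\,\overline{v_0}$. This recovers $a$ uniquely and constructively.

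The routine parts are the linearization and Green's identity; the main obstacle is the rigorous wave construction. The $v_j$ are genuine forward solutions with zero Cauchy data, not exact plane waves, so the mismatch they develop at $t=0$ together with boundary reflections must be shown not to spoil the leading stationary-phase contribution to $\int b\,v_1\cdots v_m$, and likewise for $v_0$ at $t=T$. This is exactly where admissibility and the buffers enter: with $T\ge 2d+2\lambda$, $t_1=d+\lambda$, $t_2=T-d-\lambda$, finite speed of propagation keeps $\supp(a)$ reachable from $\Sigma$ and separated from $t=0$ and $t=T$, so the plane-wave profiles can be realized on $\supp(a)$ up to controllable errors. Carrying out this construction, together with the asymptotic bookkeeping needed to read off $\hat b$ and then analytically continue it, is the heart of the argument.
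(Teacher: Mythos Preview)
Your derivation of the integral identity via higher-order linearization and the backward solution $v_0$ matches the paper (modulo a sign). The recovery step is where you depart: the paper does not use oscillating plane waves or analytic continuation. It constructs $v_0$ explicitly so that $v_0\equiv 1$ on $\Omega\times[t_1,t_2]$, whence $av_0=a$ outright, and it takes the $v_j$ to be real Gaussian bumps in a single null variable, so that $v_1\cdots v_m$ approximates a delta at a prescribed spacetime point (when $n=1$) or concentrates on a prescribed affine hyperplane (when $n\ge 2$, yielding the Radon transform of $a$). Because these bumps are compactly supported in a null variable, the buffer $\lambda$ in the admissibility condition makes them vanish identically in $\Omega$ at $t=0$: they are exact IBVP solutions with no approximation needed. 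This is what allows the paper to upgrade uniqueness to the quantitative H\"older stability of Theorem~\ref{thm:stability}, from which Theorem~\ref{thm:uniqueness} is obtained simply by letting $\delta\to 0$.

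Your Fourier route has a gap in the wave construction. A bare plane wave $\e^{\i\rho(\xi_j\cdot x-|\xi_j|t)}$ neither vanishes at $t=0$ nor has an admissible boundary trace; if you time-truncate its trace and solve the IBVP, the resulting solution differs from the plane wave by a wave launched from the nonzero data at $t=0$ (and from the truncation on $\Sigma$), which then reflects inside $\Omega$ and has no reason to be small on $\supp(a)$. Finite propagation speed does not help here, since the discrepancy is already present throughout $\Omega$ at $t=0$. The correct fix is to cut off in the \emph{same} null variable: with $\omega_j=\xi_j/|\xi_j|$, the function $\chi(\omega_j\cdot x-t-c_j)\,\e^{\i\rho|\xi_j|(\omega_j\cdot x-t)}$ is an exact solution of $\square v=0$, and the buffer $\lambda$ lets you choose $\chi$ and $c_j$ so that it equals the plane wave on $\Omega\times[t_1,t_2]$ yet vanishes at $t=0$ in $\Omega$. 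With this repair your argument recovers $\widehat a$ on the open timelike cone and analytic continuation finishes uniqueness. Be aware, though, that analytic continuation is severely ill-posed, so this variant will not deliver stability; the paper's concentrating solutions and Radon inversion are precisely what make Theorem~\ref{thm:stability} possible.
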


The \emph{measurement function } $\psi\in L^2(\Sigma)$ appearing in the statement of Theorem~\ref{thm:uniqueness} is the restriction of a solution of the following backwards wave equation to the lateral boundary:
\begin{equation}\label{eq:defv_0}
\begin{cases}
\square v_0 = 0, &\text{in } \R^n\times[0,T],\\
v_0 \equiv 1,&\text{in } \Omega\times[t_1,t_2],\\
v_0\big|_{t=T} = \p_t v_0\big|_{t=T} = 0, &\text{in }\Omega,\\
v_0 \in C_c^\infty(\Sigma).
\end{cases}
\end{equation}
We will construct a suitable solution $v_0\in C^\infty(\R^{n+1})$ to \eqref{eq:defv_0} explicitly in Appendix \ref{app:B}. The measurement function is defined as the restriction \begin{equation}\label{eq:measurementfun}
\psi:=v_0|_{\Sigma} \in C_c^\infty(\Sigma)
\end{equation} to the lateral boundary $\Sigma$.
The measurement function will be used in an integration by parts argument to cut off any contribution coming to the integral from the top $\Omega\times\{t=T\}$ of the time-cylinder $\Omega\times[0,T]$.
We denote
$$
\widetilde{\Sigma}=\Sigma^T\cap \supp(v_0).
$$

Our main result is that reconstruction of $a(x,t)$ from the non-linear map $\lambda_\psi$ is H\"older stable.

\begin{theorem}[Stability estimate with one dimensional measurements]\label{thm:stability}
Let $\Omega\subset \R^n$ be a bounded domain with a smooth boundary. Let $m\geq 2$ be an integer, $r\in \R$ with $r\leq s\in \N$ and $s+1>(n+1)/2$. Assume that $a_1,\s a_2\in C_c^{s+1}(\Omega\times \R)$ are admissible and $\psi$ is as in \eqref{eq:measurementfun}. Let $\Lambda_1,\s\Lambda_2 :H^{s+1}(\Sigma^T)\to H^r(\widetilde{\Sigma})$ be the Dirichlet-to-Neumann maps of the non-linear wave equation~\eqref{eq:intro_wave-eq}.

 Let $\eps_0>0$, $M>0$, $0<T<\infty$ and $\delta\in (0,M)$ be such that
 
\begin{equation}\label{est:DNmap_delta}
 \abs{\langle \psi, \Lambda_1(f)-\Lambda_2 (f)\rangle_{L^2(\widetilde{\Sigma})}} \leq \delta
\end{equation}
%
%
%
\noindent for all $f\in H^{s+1}(\Sigma^T)$ with $\Vert f\Vert_{H^{s+1}(\Sigma^T)}\leq \eps_0$.  Then 
\begin{equation}\label{eq:stability1D}
  \norm{a_1-a_2}_{L^\infty(\Omega \times [0,T])}\leq C \delta^{\sigma(s)},
\end{equation}
 where
 \[
 \sigma(s)= \begin{cases}
 {\frac{m-1}{(2m-1)(m(s+2)+1) }},&n=1,\\
 {\frac{m-1}{2n(2m-1)(m(s+2)+1) }}, & n\geq 2.
 \end{cases}
 \]
\end{theorem}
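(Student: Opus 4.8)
The plan is to exploit the $m$-th order linearization of the Dirichlet-to-Neumann map to reduce the problem to an interior integral of $a_1-a_2$ against products of linear waves, and then to invert that integral in a quantitative, frequency-localized way. First I would probe the nonlinear DN map with data $f_\eps=\sum_{j=1}^m \eps_j f_j$, where the $\eps_j$ are small parameters and the $v_j$ denote the solutions of the linear problem $\square v_j=0$, $v_j|_\Sigma=f_j$, $v_j|_{t=0}=\p_t v_j|_{t=0}=0$. By the forward theory, $u_{f_\eps}=\sum_j \eps_j v_j+O(|\eps|^2)$, and the first nonlinear interaction appears at order $m$: the mixed derivative $w=\p_{\eps_1}\cdots\p_{\eps_m}\big|_{\eps=0} u_{f_\eps}$ solves $\square w+m!\,a\,v_1\cdots v_m=0$ with vanishing Cauchy and lateral data. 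Pairing with $\psi=v_0|_\Sigma$ and using Green's identity together with $\square v_0=0$, the vanishing of $w,\p_t w$ at $t=0$, of $v_0,\p_t v_0$ at $t=T$, of $w$ on $\Sigma$, and the crucial fact that $v_0\equiv1$ on $\supp a$, all boundary contributions collapse to the clean identity
\[
\p_{\eps_1}\cdots\p_{\eps_m}\big|_{\eps=0}\langle\psi,\Lambda_k(f_\eps)\rangle=-m!\int_{\Omega\times[0,T]} a_k\,v_1\cdots v_m\,\d x\,\d t,\qquad k=1,2 .
\]
Subtracting the two equations isolates $\int (a_1-a_2)\,v_1\cdots v_m$.

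Next I must pass from the hypothesis, which only controls the \emph{full} nonlinear quantity $|\langle\psi,\Lambda_1(f)-\Lambda_2(f)\rangle|\le\delta$, to the $m$-th derivative above. I would do this by a finite-difference (or Cauchy-type) estimate in $\eps$ over the admissible range $\norm{f_\eps}_{H^{s+1}(\Sigma)}\le\eps_0$. The remainder is governed by the next-order terms of the solution map, whose size is controlled by the forward well-posedness in $E^{s+1}$ in terms of $L$ and the data norms. Optimizing the step size gives a bound of the form
\[
\Big|\int(a_1-a_2)\,v_1\cdots v_m\,\d x\,\d t\Big|\le C\,\delta^{\theta}\,\prod_{j=1}^m\norm{f_j}_{H^{s+1}(\Sigma)}^{\kappa}
\]
for suitable exponents $\theta,\kappa$, the factor $\prod_j\norm{f_j}^{\kappa}$ encoding the price of differentiating the nonlinear map.

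To turn this into information about $a_1-a_2$ pointwise I would take the $v_j$ to be controlled complex-exponential solutions $\e^{\i k_j\cdot X}$ of $\square v=0$ with light-like covectors $k_j$ (realized from the boundary so as to coincide with the plane waves on $\supp a\Subset\Omega\times[t_1,t_2]$, which is possible since $t_1=d+\lambda$ allows enough propagation time). Then $v_1\cdots v_m=\e^{\i\eta\cdot X}$ with $\eta=\sum_j k_j$ on $\supp a$, so the interior integral becomes $\widehat{(a_1-a_2)}(-\eta)$ up to normalization. Because sums of $m\ge2$ light-like covectors sweep out a ball of radius $\sim\rho$, while the corresponding data norms grow like $\rho^{s+1}$, the previous bound yields, for all $|\eta|\le\rho$,
\[
\big|\widehat{(a_1-a_2)}(\eta)\big|\le C\,\rho^{\beta}\,\delta^{\theta},
\]
where $\beta$ collects the powers coming from the $\rho^{s+1}$ data scaling through the factor $\prod_j\norm{f_j}^{\kappa}$.

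Finally I would split frequencies at the threshold $\rho$: low frequencies are bounded by the displayed estimate, and high frequencies by the a priori smoothness $\norm{a_1-a_2}_{H^{s+1}}\le 2L$ (both potentials admissible and compactly supported), which gives a decaying tail $\sim L\rho^{-((s+1)-(n+1)/2)}$ after integrating $|\widehat{a_1-a_2}|$ over $|\eta|>\rho$. Summing the two pieces and optimizing $\rho$ (jointly with the finite-difference parameters of the second step) produces $\norm{a_1-a_2}_{L^\infty(\Omega\times[0,T])}\le C\delta^{\sigma(s)}$; the dimension enters through the volume factor $\rho^{n}$ (respectively $\rho^{n+1}$) incurred in passing from Fourier bounds to $L^\infty$, which is what separates the cases $n=1$ and $n\ge2$ in $\sigma(s)$. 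The \emph{main obstacle} is precisely this three-way balancing in the last three steps: the finite-difference error pushes toward large $\eps_j$, the growth of the test-wave norms with frequency pushes toward small $\rho$, and the smoothness tail pushes toward large $\rho$, while one must control the higher-order terms of the nonlinear solution map \emph{uniformly in the probing frequency}. Getting the powers $(2m-1)$, $(m-1)$ and $m(s+2)+1$ exactly right in $\sigma(s)$ is the delicate bookkeeping that this optimization demands.
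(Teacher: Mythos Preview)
Your high-level architecture---$m$-th order linearization, the integral identity via integration by parts against $v_0$, finite differences to pass from the nonlinear hypothesis to the linearized quantity, and a final optimization---matches the paper. The divergence is in the choice of test waves and the recovery mechanism. The paper does \emph{not} use oscillatory plane waves $e^{ik\cdot X}$ and a Fourier-transform/frequency-splitting argument. Instead it takes Gaussian pulses travelling along characteristics, $H_j(x,t)=\chi_\alpha(\ell)\,\tau^{1/2}e^{-\tau\ell^2/2}$ with $\ell=(x-x_0)\mp(t-t_0)$ in $1+1$ dimensions (and $\ell=x\cdot\theta\mp t +c$ in higher dimensions). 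In $n=1$ the product $H_1H_2$ is an approximate delta at $(x_0,t_0)$, so one reads off $(a_1-a_2)(x_0,t_0)$ directly with an $O(\tau^{-1/2})$ error; in $n\ge 2$ the product collapses to a hyperplane, yielding the partial Radon transform $\mathscr R(a_1-a_2)(t_0,\theta,\eta)$, which via the Fourier slice theorem controls $\norm{(a_1-a_2)(\cdot,t_0)}_{H^{-(n-1)/2}}$. The $L^\infty$ bound then comes from interpolating this against the a priori $H^{s+1}$ bound with interpolation parameter $l=1/(2n)$; \emph{that} is where the factor $2n$ in $\sigma(s)$ comes from, not from a volume factor $\rho^n$ in a Fourier inversion as you suggest.

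There is also a genuine gap in your plane-wave scheme: the forward theory requires $v_j|_{t=0}=\partial_t v_j|_{t=0}=0$, and $e^{ik\cdot X}$ does not satisfy this. Your remark that the waves can be ``realized from the boundary so as to coincide with the plane waves on $\supp a$'' is not automatic---the solution with boundary trace $e^{ik\cdot X}|_\Sigma$ and zero Cauchy data differs from $e^{ik\cdot X}$ by a wave with nontrivial initial data on all of $\Omega$, and finite speed does not kill it on $\supp a$. The paper's Gaussian waves sidestep this entirely: being functions of $x\cdot\theta\pm t$ multiplied by a compact cutoff $\chi_\alpha$, they are exact solutions of $\square=0$ and, for $\alpha$ small, vanish identically near $t=0$. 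A second bookkeeping point: for these Gaussians the boundary norm scales like $\tau^{(s+2)/2}$, not like $\rho^{s+1}$ as for a plane wave; this is exactly what produces the $m(s+2)+1$ in the denominator of $\sigma(s)$ after the three-parameter optimization (Lemma~\ref{lemma:final_estimate_n}). Your approach may be salvageable, but as written it would not land on the stated exponent and leaves the construction of admissible test solutions unresolved.
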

 Theorem~\ref{thm:uniqueness} follows from Theorem~\ref{thm:stability} by letting $\delta \to 0$.
Note that in the theorem we assume that our boundary values may be supported on all of $\Sigma^T$.
However, we only assume that the measurements are made on a smaller subset $\widetilde{\Sigma}=\Sigma^T\cap \supp(v_0)$ of $\Sigma^T$.

In fact, we emphasize that to recover the potential $a\in C^{s+1}_0(\Omega\times[t_1,t_2])$ in a stable way it is sufficient to make \emph{one dimensional measurements} 
\[
\lambda_{\psi}: f\mapsto\langle \psi, \Lambda(f)\rangle_{L^2(\widetilde{\Sigma})}\in\R\]
on $\widetilde{\Sigma}$.
Here $\psi$ can be considered as an instrument function that models the measurement instrument that is used to do observations on the solution $u$. Note that $\psi$ is a smooth function that is a constant on
$\partial\Omega\times[t_1,t_2]$.
This means that $a(x,t)$ can be recovered from \emph{low resolution measurements} if we can accurately control the source $f$.

\begin{corollary}\label{cor:Th1_1}
Let us adopt assumptions and notations of Theorem~\ref{thm:stability}. Instead of condition \eqref{est:DNmap_delta}, suppose that
$$
 \norm{\Lambda_1(f)-\Lambda_2(f)}_{H^r(\widetilde{\Sigma})}\leq \delta
$$
\noindent for all $f\in H^{s+1}(\Sigma^T)$ with $\Vert f\Vert_{H^{s+1}(\Sigma^T)}\leq \eps_0$. Then the stability estimate \eqref{eq:stability1D} is valid.
\end{corollary}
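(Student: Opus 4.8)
The plan is to \emph{deduce the hypothesis of Theorem~\ref{thm:stability} from the stronger hypothesis assumed in the corollary}, and then invoke that theorem verbatim. The conceptual point is that the one-dimensional measurement $\langle \psi, \Lambda_1(f)-\Lambda_2(f)\rangle_{L^2(\widetilde{\Sigma})}$ appearing in~\eqref{est:DNmap_delta} is nothing but a single bounded linear functional evaluated on the full Dirichlet-to-Neumann difference $\Lambda_1(f)-\Lambda_2(f)$. Hence controlling the latter in the norm of $H^r(\widetilde{\Sigma})$ automatically controls the former, and the reduction is immediate.

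First I would record the relevant duality estimate. Since $\psi=v_0|_{\Sigma}\in C_c^\infty(\widetilde{\Sigma})$ by~\eqref{eq:measurementfun}, it belongs to $H^{-r}(\widetilde{\Sigma})$ for every $r\in\R$. Interpreting $\langle\ccdot,\ccdot\rangle_{L^2(\widetilde{\Sigma})}$ as the duality pairing between $H^{-r}(\widetilde{\Sigma})$ and $H^r(\widetilde{\Sigma})$ with $L^2$ as pivot space, one has for every $g\in H^r(\widetilde{\Sigma})$ the bound $\abs{\langle \psi, g\rangle_{L^2(\widetilde{\Sigma})}}\leq \norm{\psi}_{H^{-r}(\widetilde{\Sigma})}\,\norm{g}_{H^r(\widetilde{\Sigma})}$. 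Applying this with $g=\Lambda_1(f)-\Lambda_2(f)$ and using the hypothesis of the corollary gives, for all $f\in H^{s+1}(\Sigma^T)$ with $\norm{f}_{H^{s+1}(\Sigma^T)}\leq\eps_0$,
\[
\abs{\langle \psi, \Lambda_1(f)-\Lambda_2(f)\rangle_{L^2(\widetilde{\Sigma})}}
\leq \norm{\psi}_{H^{-r}(\widetilde{\Sigma})}\,\norm{\Lambda_1(f)-\Lambda_2(f)}_{H^r(\widetilde{\Sigma})}
\leq C'\delta,
\]
where $C':=\norm{\psi}_{H^{-r}(\widetilde{\Sigma})}<\infty$ is a fixed constant depending only on $\psi$ and $r$.

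This last inequality is precisely the hypothesis~\eqref{est:DNmap_delta} of Theorem~\ref{thm:stability} with $\delta$ replaced by $\delta':=C'\delta$; after possibly enlarging the constant $M$ we may assume $\delta'\in(0,M)$. Theorem~\ref{thm:stability} then yields $\norm{a_1-a_2}_{L^\infty(\Omega\times[0,T])}\leq C(\delta')^{\sigma(s)}=C(C')^{\sigma(s)}\delta^{\sigma(s)}$, which is the desired estimate~\eqref{eq:stability1D} after absorbing $(C')^{\sigma(s)}$ into the constant. The only step that requires a word of care—rather than a genuine obstacle—is the interpretation of the $L^2$ pairing when $r<0$; this is handled cleanly by the smoothness and compact support of $\psi$, which guarantee $\psi\in H^{-r}(\widetilde{\Sigma})$ for every $r\in\R$. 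All of the analytic work is already contained in Theorem~\ref{thm:stability}, so the corollary is a soft consequence of it.
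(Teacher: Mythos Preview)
Your argument is correct: the duality bound $\abs{\langle \psi, g\rangle_{L^2(\widetilde{\Sigma})}}\leq \norm{\psi}_{H^{-r}(\widetilde{\Sigma})}\norm{g}_{H^r(\widetilde{\Sigma})}$, valid because $\psi\in C_c^\infty(\widetilde{\Sigma})$, immediately converts the norm hypothesis of the corollary into hypothesis~\eqref{est:DNmap_delta} up to a fixed constant, and then Theorem~\ref{thm:stability} applies verbatim.

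The paper does not give a separate proof of the corollary but rather handles both statements simultaneously inside the proof of Theorem~\ref{thm:stability}: at the step~\eqref{eq:estimate_for_optimization} where the smallness of $\Lambda_1-\Lambda_2$ enters, the paper actually writes out the chain of inequalities using the norm hypothesis $\norm{\Lambda_1(f)-\Lambda_2(f)}_{H^r(\widetilde{\Sigma})}\leq\delta$ (precisely via the same duality with $\norm{v_0}_{\dualH^{-r}(\Sigma)}$), and then remarks that the one-dimensional hypothesis~\eqref{est:DNmap_delta} yields a ``similar (slightly simpler) estimate.'' So the underlying analytic observation is identical; the difference is purely organizational. Your black-box reduction from the corollary to the theorem is cleaner and makes the logical dependence explicit, while the paper's inline treatment avoids invoking the theorem as a finished statement and instead shows that either hypothesis can be slotted into the same proof at the same point.
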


%
%

%

There are stability estimates for the recovery of the potentials $a$ and $b$ of the corresponding linear wave equation $\square u + b\p_t u + au=0$, see for example \cite{IsSun}, where the authors obtained a local H\"older stability result for this problem when given measurements on a part $\Gamma_0$ of the lateral boundary $\Sigma$.
In a related spirit, one might ask is it possible to recover a Riemannian metric $g$ when given the Dirichlet to Neumann map for the equation $(\partial_t^2-\Delta_g)u=0$.
Some earlier results in this direction are based on Tataru's unique continuation principle.
In this case, stability estimates are of logarithmic type, see e.g.~\cite{BoKuLa17}.
However, later these results have been improved by using different techniques.
In \cite{StUh} it was shown that a simple Riemannian metric $g$ can be recovered in a H\"older stable way from the DN map.



We also consider the question of reconstruction of the unknown potential when there is possibly noise $\mathcal{E}$ involved in the measurements. 
We assume that the noise is a bounded, possibly non-linear, mapping $H^{s+1}(\Sigma) \to H^r(\Sigma)$, $r\in \R$ and $r \leq s$.   We present our reconstruction and stability results with noise in $\R^{1+1}$.
The general case of $\R^{n+1}$, $n\geq 2$ will be given in Proposition~\ref{thm:noise_Rn}, see Section~\ref{sec:main-results-nD}. The reason is that the statement for $n\geq 2$ involves a Radon transformation. We write $\eps=0$ for the condition $\eps_1=\cdots=\eps_m=0$.

\begin{theorem}[Reconstruction, $n=1$]\label{thm:noise}
Let $\Omega\subset \R$ be an interval. Let $m\geq 2$ be an integer, $r\in \R$ with $r\leq s\in \N$ and $s+1>(1+1)/2$. Assume that $a \in C_c^\infty(\Omega\times \R)$ is admissible and let $\Lambda:H^{s+1}(\Sigma^T)\to H^r(\widetilde{\Sigma})$
be the Dirichlet-to-Neumann map of the non-linear wave equation~\eqref{eq:intro_wave-eq}. 
Assume also that $\E:H^{s+1}(\Sigma^T)\to H^r(\Sigma^T)$. 

Let $\eps_0>0$, $M>0$, $0<T<\infty$ and $\delta\in (0,M)$ be such that
 \[
  \norm{\E (f)}_{H^r(\Sigma^T)}\leq \delta, 
 \]
for all $f\in H^{s+1}(\Sigma^T)$ with $\Vert f\Vert_{H^{s+1}(\Sigma^T)}\leq \eps_0$. 
%

There exists $\tau\geq 1$, $\eps_1,\ldots,\eps_m>0$ and a finite family of functions $\{H_{j}^{\tau,(x_0,t_0)}\}\subset H^{s+1}(\Sigma^T)$
  where $j=1,\ldots,m$, $(x_0,t_0)\in \R^{1+1}$ such that
\begin{equation}\label{est:2D_noise}
\begin{aligned}
&\sup_{(x_0,t_0)\in \Omega\times [0, T]}\,\Big|
- a(x_0, t_0) \\
&\quad \quad \quad \quad \quad    -\frac{1}{2\pi}D_{\eps_1\cdots\eps_m}^m\big|_{\eps=0}\int_{\widetilde{\Sigma}} \psi\,(\Lambda + \E)(\eps_1H_1^{\tau,(x_0,t_0)} +\cdots +\eps_mH_m^{\tau,(x_0,t_0)}) dS 
\Big| \\
& \qquad  \leq C \delta^{\sigma (s)}.
\end{aligned}
\end{equation}
Here $\sigma(s)$ and $C$ are as in Theorem \ref{thm:stability} and the measurement function $\psi$ is as in \eqref{eq:measurementfun}.
The finite difference operator $D_{\eps_1\cdots\eps_m}^m\big|_{\eps=0}$ is defined in \eqref{eq:fin_diff}.

\end{theorem}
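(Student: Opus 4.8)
The plan is to read off $a(x_0,t_0)$ from the $m$-th order linearization of the (noisy) measurement, using source functions $H_j^{\tau,(x_0,t_0)}$ that generate linear waves whose product concentrates at $(x_0,t_0)$, and then to balance the resulting errors against the noise level $\delta$ exactly as in Theorem~\ref{thm:stability}. First I would perform the higher order linearization. Writing $f=\sum_{j=1}^m \eps_j H_j^{\tau,(x_0,t_0)}$ and letting $u_\eps$ be the small solution, the first order terms $v_j=\p_{\eps_j}u_\eps|_{\eps=0}$ solve the linear wave equation $\square v_j=0$ with Dirichlet data $H_j$ and zero Cauchy data (the nonlinearity $a\,u^m$ contributes nothing at first order since $m\geq 2$). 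Differentiating $\square u_\eps + a\,u_\eps^m=0$ once in each $\eps_j$ and setting $\eps=0$ shows that $w=\p_{\eps_1}\cdots\p_{\eps_m}u_\eps|_{\eps=0}$ solves $\square w = -\,m!\,a\,v_1\cdots v_m$ with zero boundary and Cauchy data. Pairing $\p_\nu w$ on $\Sigma$ with $\psi=v_0|_\Sigma$ and integrating by parts over $\Omega\times[0,T]$ against $v_0$, the lateral term $\int w\,\p_\nu v_0$ vanishes because $w|_\Sigma=0$, the Cauchy terms at $t=0$ vanish since $w,\p_t w$ vanish there, and the terms at $t=T$ vanish by the final conditions in~\eqref{eq:defv_0}; using $\square v_0=0$ and $v_0\equiv 1$ on $\supp(a)\Subset\Omega\times[t_1,t_2]$ this collapses to
\[
 \p_{\eps_1}\cdots\p_{\eps_m}\Big|_{\eps=0}\int_{\widetilde{\Sigma}}\psi\,\Lambda\Big(\textstyle\sum_j\eps_j H_j\Big)\,dS \;=\; -\,m!\int_{\Omega\times[0,T]} a\,v_1\cdots v_m\,dx\,dt.
\]

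Next I would construct the sources so that the product $v_1\cdots v_m$ acts as an approximate point mass at $(x_0,t_0)$. In one dimension $\square v=0$ is solved by counter-propagating profiles $v(x,t)=g(x-t)$ and $v(x,t)=h(x+t)$; taking $H_j^{\tau,(x_0,t_0)}$ to be the boundary traces of such wave packets of width $\sim \tau^{-1}$, some right-moving and some left-moving, all centred so that the characteristics $\{x-t=x_0-t_0\}$ and $\{x+t=x_0+t_0\}$ meet at $(x_0,t_0)$, makes $v_1\cdots v_m$ concentrate there. Normalising the profiles so that $\tfrac{m!}{2\pi}\int v_1\cdots v_m\to 1$, the smoothness $a\in C^{s+1}$ gives a concentration error $\big|\tfrac{m!}{2\pi}\int a\,v_1\cdots v_m + a(x_0,t_0)\big|\leq C\tau^{-\gamma}\norm{a}_{C^{s+1}}$, uniformly in $(x_0,t_0)$, with the sign arranged to produce $-a(x_0,t_0)$. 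For $n\geq 2$ products of plane waves integrate $a$ over a hyperplane rather than a point, which is why that case is stated separately in Proposition~\ref{thm:noise_Rn} in terms of a Radon transform.

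Finally I would pass from the exact derivative to the finite difference $D^m_{\eps_1\cdots\eps_m}|_{\eps=0}$ of~\eqref{eq:fin_diff} applied to the \emph{noisy} map $\Lambda+\E$, and control the three error sources. The truncation error between $D^m$ and $\p_{\eps_1}\cdots\p_{\eps_m}|_0$ is $O(\eps)$ times higher $\eps$-derivatives of the measurement, which by the well-posedness of Section~\ref{sec:forward-problem} are bounded by a power $\tau^{q}$ of the packet frequency. The noise contributes at most $C2^m\delta/(\eps_1\cdots\eps_m)$ because $D^m$ carries the factor $(\eps_1\cdots\eps_m)^{-1}$. Choosing $\eps_1=\cdots=\eps_m=\eps$ and optimising $\eps$ and $\tau$ to balance $\delta\,\eps^{-m}$, $\eps\,\tau^{q}$ and $\tau^{-\gamma}$ against each other yields the Hölder rate $\delta^{\sigma(s)}$ with exactly the $\sigma(s)$ of Theorem~\ref{thm:stability}, and taking the supremum over $(x_0,t_0)$ is harmless since all constants are uniform.

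The main obstacle is this last balancing together with its uniformity: the packets must be sharp (large $\tau$) to localise $a$, yet the finite difference amplifies the noise like $\eps^{-m}$ and its truncation error grows with $\tau$, so one must track the $\tau$-dependence of the nonlinear solution norms in $E^{s+1}$ precisely and independently of $(x_0,t_0)$ in order to obtain a single exponent $\sigma(s)$. This is exactly the quantitative heart of Theorem~\ref{thm:stability}, which the present reconstruction reuses, now with $\delta$ reinterpreted as the size of the admissible noise $\E$.
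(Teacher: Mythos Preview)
Your proposal is essentially correct and follows the paper's approach: higher order linearization, integration by parts against the auxiliary solution $v_0$, Gaussian wave packets $H_j$ in the characteristic coordinates so that $v_1\cdots v_m$ approximates a delta at $(x_0,t_0)$ (Lemma~\ref{est:tau} gives the $\tau^{-1/2}$ concentration rate), a noise contribution of order $\delta/(\eps_1\cdots\eps_m)$, and then the optimisation Lemma~\ref{lemma:final_estimate} (resp.~\ref{lemma:final_estimate_n}) to produce $\delta^{\sigma(s)}$.

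One point where your write-up deviates from the paper and could cause trouble if carried out literally: you phrase the passage from $\p_{\eps_1}\cdots\p_{\eps_m}$ to $D^m_{\eps_1\cdots\eps_m}$ as a generic Taylor truncation ``$O(\eps)$ times higher $\eps$-derivatives of the measurement''. The paper never establishes $C^{m+1}$-smoothness of $\eps\mapsto u_\eps$; instead it works directly with the finite difference via the expansion of Proposition~\ref{cor:epsilon_expansion} and~\eqref{eq:mixed_difference}, which gives $D^m u = m!\,w_{1,\ldots,1} + (\eps_1\cdots\eps_m)^{-1}\tildeR$ with the explicit bound $\|\square\tildeR\|_{E^{s+1}}\leq C\|\eps_1 f_1+\cdots+\eps_m f_m\|_{H^{s+1}}^{2m-1}$. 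It is this $(2m-1)$-power (not an $O(\eps)$ remainder) that, combined with $\|H_j\|_{H^{s+1}(\Sigma)}\lesssim\tau^{(s+2)/2}$ from Lemma~\ref{est:H_j}, produces the term $\eps^{m-1}\tau^{(2m-1)(s+2)/2}$ balanced in Lemma~\ref{lemma:final_estimate_n}. If you replace your vague truncation step by this expansion, your argument coincides with the paper's.
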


In higher dimensions the situation is somewhat different. Using a similar approach as in $1+1$ dimensions, we get an estimate similar to \eqref{est:2D_noise} for the Radon transform $\mathscr{R}(a)$ in place of $a$, see Proposition \ref{thm:noise_Rn} in Section \ref{sec:main-results-nD}. The knowledge of  $\mathscr{R}(a)$ allows us to get information of the unknown potential in a negative Sobolev index (by using the Fourier slice theorem, see Section~\ref{RT}).
Then Theorem \ref{thm:stability} for higher dimensions $n\geq 2$ follows by combining this fact with a standard interpolation argument where we use the admissible property of the potential $a$. In fact, the term $1/(2n)$ in the exponent $\sigma(s)$ comes from this interpolation step. The definition of the Radon transform and its relevant properties can be found in Section \ref{sec:main-results-nD}.

Let us explain how we prove Theorem~\ref{thm:stability}. The proof is based on the higher order linearization method, which was used in many of the works mentioned earlier. We now explain this method. We will also use an integration by parts argument introduced in the study of partial data inverse problem for non-linear elliptic equations in \cite{LLLS19b, KrUh20}. Similar argument was also used recently in~\cite{HUZ20}.

%

We first explain how we can recover $a$ from the DN map $\Lambda$ of the equation~\eqref{eq:intro_wave-eq}.
 Let us consider the case $m=2$. Let $f_1,\s f_2\in H^{s+1}(\Sigma)$, and let us denote by $u_{\eps_1f_1+\eps_2f_2}$ the solution to~\eqref{eq:intro_wave-eq} with boundary data $\eps_1f_1+\eps_2f_2$, where $\eps_1,\eps_2$ are sufficiently small parameters. By taking the mixed derivative of the equation~\eqref{eq:intro_wave-eq} with respect to the parameters $\eps_1$ and $\eps_2$, and of the solution $u_{\eps_1f_1+\eps_2f_2}$, we see that 
\[
 w:=\frac{\p}{\p \eps_1}\frac{\p}{\p \eps_2}\Big|_{\eps_1=\eps_2=0} u_{\eps_1 f_1 + \eps_2 f_2}
\]
solves 
\begin{equation}\label{eq:second_deriv}
 \square w   = -2a v_{1}v_2
\end{equation}
 with zero initial and boundary data. 
 Here the functions $v_j$ solve
 \begin{equation}
 \begin{cases}
 \square v_j = 0, &\text{in } \Omega\times [0,T],\\ 
 v_j = f_j, &\text{on } \p\Omega\times[0,T],\\
 v_j\big|_{t=0} = \p v_j\big|_{t=0} = 0, &\text{in } \Omega,
 \end{cases}
 \end{equation}
 for $j=1,2$.  
  This way we have produced new linear equations from the non-linear equation \eqref{eq:intro_wave-eq}. Studying these new equations in inverse problems for non-linear equations is known as the higher order linearization method.

 If we assume that the DN map is known, then the normal derivative of $w$ is also known on $\Sigma$ since  
  \[
   \p_\nu w = \p^2_{\eps_1\eps_2}|_{\eps_1=\eps_2=0}\s \Lambda(\eps_1f_1+\eps_2f_2).
 \]
We let $v_0$ be an auxiliary function solving $\square v=0$ with $v_0|_{t=T} =\p_t v_0|_{t=T} = 0$ in  $\Omega$. The function $v_0$ will compensate the fact we know $\p_\nu w$ only on the lateral boundary $\Sigma$. Then, by multiplying~\eqref{eq:second_deriv} by $v_0$, and integrating by parts on $\Omega\times [0,T]$, we have the integral identity
 \begin{equation}\label{eq:integral_identity_derivitve}
  \int_{\Sigma}v_0\s \p^2_{\eps_1\eps_2}|_{\eps_1=\eps_2=0}\Lambda(\eps_1f_1+\eps_2f_2)\s \d S=\int_{\Omega\times [0,T]} \s v_0\s \square w \s \d x \s \d t  =-2\int_{\Omega\times [0,T]}a\s v_0\s v_1\s v_2\s \d x \s \d t. 
 \end{equation}
 Thus the quantity
 \begin{equation}\label{eq:integral_denisty}
  \int_{\Omega\times [0,T]} a\s v_0\s v_1\s v_2 \s \d x \s \d t 
 \end{equation}
 is known from the knowledge of the DN map. Since $v_1$ and $v_2$ were arbitrary solutions to $\square v=0$, we may choose suitable solutions $v_1$ and $v_2$ so that the products of the form $v_0v_1v_2$ become dense in $L^1(\Omega\times [0,T])$. This recovers $a$. 
 
Heuristically, in $1+1$ dimensions it would be sufficient to have $v_1=\delta((x-x_0)-(t-t_0))$ and $v_2=\delta((x-x_0)+(t-t_0))$ to recover $a(x_0,t_0)$ for $(x_0,t_0)\in \R^{1+1}$. Here $\delta$ is the $1$-dimensional delta function. In this case $v_1v_2$ is the delta function $\delta_{(x_0,t_0)}$ of $\R^{1+1}$ with mass at $(x_0,t_0)$. 
However, since delta distributions do not correspond to a family of solutions of the non-linear wave equation, we will instead use \emph{approximate delta functions}.
  In higher dimensions, different choices of $v_1$ and $v_2$ reduce the integral~\eqref{eq:integral_denisty} to a Radon transformation of $a$ on $\R^n$, which is stably invertible.

Instead of differentiating equation~\eqref{eq:intro_wave-eq}, to obtain stability we will take the mixed finite difference
  $D_{\eps_1,\eps_2}^2$ of $u_{\eps_1f_1+\eps_2f_2}$. In this case, we have the following integral identity 
\begin{equation}
\begin{aligned}
  \int_{\Omega\times [0,T]} a\s v_0\s v_1\s v_2\s \d x \s \d t &=\int_{\Sigma}v_0\s D_{\eps_1\eps_2}^2\Big|_{\eps_1=\eps_2=0}\Lambda(\eps_1f_1+\eps_2f_2) \s \d S\\
  &\qquad  + \frac{1}{\eps_1\eps_2}\int_{\Omega\times [0,T]} v_0\square\s \tildeR\s \d x \s \d t ,
  \end{aligned}
 \end{equation}
where $\tildeR=\mathcal{O}_{E^{s+2}}(\langle\eps_1,\eps_2\rangle^3)$ in an energy space norm, for details see \eqref{eq:energy_norm} and \eqref{eq:tildeR}--\eqref{est:square_tildeR}. Here we denote by $\langle\eps_1,\eps_2\rangle^3$ homogeneous polynomials of order $3$ in $\eps_1$ and $\eps_2$. Stability result Theorem \ref{thm:stability} will follow by optimizing in $\eps_1$ and $\eps_2$ and parameters related to the solutions $v_1$ and $v_2$. The proof of Theorem~\ref{thm:noise} follows from a similar argument.

 The paper is organized as follows. In Section \ref{sec:forward-problem}, we lay out the basic properties for semi-linear hyperbolic equations that we use. This includes the well-posedness of the boundary value problem for the equation~\eqref{eq:intro_wave-eq}. 
	We also calculate formulas for the second order finite differences of solutions to~\eqref{eq:intro_wave-eq} in Section~\ref{sec:forward-problem}. In Section~\ref{sec:main-results-1D}, we prove Theorems~\ref{thm:stability} and~\ref{thm:noise} in $1+1$ dimensions, and in Section~\ref{sec:main-results-nD} we prove these theorems in higher dimensions. We have placed some proofs in Appendix. 

\section{Forward problem and definition of the DN map}\label{sec:forward-problem}
In this section we study the existence of solutions to the boundary value problem of a non-linear wave equation in $\R^{n+1}$:
\begin{equation*}
\begin{cases}
\square u + au^m   =0, & \text{in } \Omega\times[0,T],\\
u= f, & \text{on } \p\Omega\times[0,T], \\
u\big|_{t=0} =\partial_t u\big|_{t=0} = 0,&\text{in }\Omega.
\end{cases}
\end{equation*}
Let $\Omega$ be an open subset of $\R^{n}$ with smooth boundary. Let $s\in \N$ and let us denote for the sake of brevity
\[
X^{s}(\Omega):= C([0,T]\; H^{s}(\Omega))\cap C^{s}([0,T]\; L^2(\Omega)).
\]
If there is no danger of misunderstanding, we simply denote $X^s(\Omega)$ by $X^s$, or just by $X$ if the index $s$ is additionally known from the context. The norm of the Banach space $X^s(\Omega)$ is given by
\[
\Vert f \Vert_{X^s} := \sup_{0<t<T} \Big(
\Vert f(\ccdot,t)\Vert_{H^{s+1}(\Omega)} + \Vert \partial_t f(\ccdot,t)\Vert_{H^s(\Omega)}
\Big).
\]

To prove existence of small solutions for the non-linear wave equation, we consider the linear initial-boundary value problem
\begin{equation*}
\begin{cases}
\square u  = F, \quad \text{in } \Omega\times[0,T],\\
u=f, \,\quad\ \text{ on } \p\Omega\times[0,T] ,\\
u\big|_{t=0} = \psi_0,\quad \partial_t u\big|_{t=0} = \psi_1,&\text{in }\Omega
\end{cases}
\end{equation*}
 for the linear wave operator.
The standard \emph{compatibility conditions of order $s$} for this problem are given as 
\begin{equation}\label{eq:compatibility}
\begin{aligned}
 f|_{t=0}&=\psi_0|_{\p\Omega}, \quad \p_tf|_{t=0}=\p_tu|_{\p\Omega \times \{0\}}=\psi_1|_{\p \Omega}, \\
 \p_t^2f|_{t=0}&=\p_t^2u|_{\p \Omega \times \{0\}}=\Delta \psi_0|_{\p \Omega} +F|_{\p\Omega\times \{0\}}, 
 \end{aligned}
\end{equation}
and similarly for the higher order derivatives up to order $s$. These conditions guarantee that at the boundary $\p\Omega$ the initial data $(\psi_0,\psi_1)$ matches with the corresponding boundary condition $f$, see \cite[Section 2.3.7]{KKL01}.
Especially, if $\p_t^kf|_{t=0}=0$ for all $k=0,\ldots,s$, and $F\equiv 0$ and $\psi_0\equiv\psi_1\equiv 0$, then the compatibility conditions of order $s$ are true.
We will use the following result from the book~\cite[Theorem 2.45]{KKL01}, see also~\cite{LLT86}.
\begin{theorem*}[Existence and estimates for linear equation \cite{KKL01}]\label{thm:KKL-energy}
Let $s\in\N$ and $0<T<\infty$. Assume that $F\in L^1([0,T]\; H^s(\Omega))$, $\p_t^sF\in L^1([0,T]\; L^2(\Omega))$, $\psi_0\in H^{s+1}(\Omega)$, $\psi_1\in H^s(\Omega)$ and $f\in H^{s+1}(\Sigma)$. If all the compatibility conditions up to the order $s$ are satisfied, then the problem 
\begin{equation}\label{wave-eq_with_data}
\begin{cases}
\square u  = F, \quad\text{in } \Omega\times[0,T],\\
u=f, \quad\text{on } \p\Omega\times[0,T], \\
u\big|_{t=0} = \psi_0,\quad \partial_t u\big|_{t=0} = \psi_1,&\text{in }\Omega
\end{cases}
\end{equation}
has a unique solution $u$ satisfying
\[
u\in X^{s+1}(\Omega) \text{ and } \p_\nu u|_{\Sigma}\in H^{s}(\Sigma). 
\]
Moreover, we have the estimate for all $t\in [0, T]$
\begin{equation}\label{energy_estim}
\begin{split}
 &\norm{u(\ccdot,t)}_{H^{s+1}(\Omega)}+\norm{\p_t^{s+1}u(\ccdot,t)}_{L^2(\Omega)}+\norm{\p_\nu u}_{H^{s}(\Sigma)} \\
 &\qquad\leq c_T\left(\norm{F}_{L^1([0,T]\; H^s(\Omega))}+\norm{\p_t^s F}_{L^1([0,T]\; L^2(\Omega))}\right.  \\
 &\qquad\qquad\qquad\qquad\qquad+\left.\norm{\psi_0}_{H^{s+1}(\Omega)}+\norm{\psi_1}_{H^{s}(\Omega)}+\norm{f}_{H^{s+1}(\Sigma)}\right).
 \end{split}
\end{equation}
\end{theorem*}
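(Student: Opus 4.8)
\emph{Overall strategy.} I would establish the sharp a priori estimate \eqref{energy_estim} and then construct the solution by a Galerkin approximation, with uniqueness following from the energy estimate applied to differences. The one point I would be careful about is the lateral data: I would \emph{not} reduce to homogeneous Dirichlet data by subtracting a lift of $f$, because $\square$ applied to such a lift loses a spatial derivative and would force $\norm{f}_{H^{s+2}(\Sigma)}$ rather than $\norm{f}_{H^{s+1}(\Sigma)}$ on the right-hand side. Instead I would work directly with the inhomogeneous problem and control the boundary contribution through a multiplier identity.

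\emph{Base estimate and the Neumann trace together.} For $s=0$ I would combine two multipliers. Multiplying $\square u=F$ by $\p_t u$ and integrating over $\Omega$ gives, after Grönwall, the standard energy bound for $\norm{u(\ccdot,t)}_{H^1(\Omega)}+\norm{\p_t u(\ccdot,t)}_{L^2(\Omega)}$ in terms of the data. To reach the Neumann trace I would multiply by $h\cdot\nabla u$, where $h$ is a fixed smooth vector field equal to the outward unit normal $\nu$ on $\p\Omega$, and integrate by parts (a Rellich/Pohozaev computation). The boundary integral that appears splits into the square of the normal derivative $\tfrac12\int_\Sigma\abs{\p_\nu u}^2$ and terms involving only the tangential and time derivatives of $u|_\Sigma=f$, which are bounded by $\norm{f}_{H^1(\Sigma)}$. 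Rearranging the identity then controls $\norm{\p_\nu u}_{L^2(\Sigma)}$ by the interior energy together with $\norm{f}_{H^1(\Sigma)}$; this hidden-regularity step is exactly the gain that is not available from the trace theorem and is the crux of the theorem (it is the content of \cite{LLT86}).

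\emph{Passage to order $s$.} For higher regularity I would differentiate the equation $k$ times in $t$ for $0\le k\le s$: each $\p_t^k u$ solves $\square(\p_t^k u)=\p_t^k F$, and its initial data $\p_t^k u|_{t=0}$ is determined recursively from $\psi_0,\psi_1,F$ through the equation. This is precisely where the order-$s$ compatibility conditions \eqref{eq:compatibility} are needed: they guarantee that these computed initial data lie in the correct spaces and match the lateral data, so that the base estimate may be applied to each $\p_t^k u$. This yields control of $\norm{\p_t^{s+1}u(\ccdot,t)}_{L^2(\Omega)}$ and of $\norm{\p_\nu u}_{H^s(\Sigma)}$, while the elliptic identity $-\Delta u=F-\p_t^2 u$ on $\Omega$, read together with the boundary value $u|_\Sigma=f$, upgrades the time-derivative bounds to the full spatial regularity $\norm{u(\ccdot,t)}_{H^{s+1}(\Omega)}$ by elliptic regularity and downward induction in the number of time derivatives.

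\emph{Existence, uniqueness, and the main difficulty.} With the uniform a priori bounds in hand I would run a Galerkin scheme in the Dirichlet-Laplacian eigenbasis (after incorporating the inhomogeneous data), extract a weak-$*$ limit using the estimates, and verify that the limit solves \eqref{wave-eq_with_data} in the class $u\in X^{s+1}(\Omega)$, $\p_\nu u|_\Sigma\in H^s(\Sigma)$, with \eqref{energy_estim} preserved by lower semicontinuity. Uniqueness is immediate: the difference of two solutions has zero data, so the base energy estimate forces it to vanish. The principal obstacle throughout is the second paragraph: the normal-derivative trace estimate is sharper than what elliptic trace theory provides, and obtaining it at the $H^{s+1}(\Sigma)$ level for the Dirichlet datum requires the multiplier identity carefully matched to the boundary geometry.
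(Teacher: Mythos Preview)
The paper does not prove this theorem: it is quoted verbatim as \cite[Theorem 2.45]{KKL01} (with a pointer to \cite{LLT86}) and used as a black box, so there is no in-paper argument to compare your proposal against. Your outline---energy multiplier plus a Rellich/Pohozaev multiplier $h\cdot\nabla u$ for the hidden Neumann regularity at level $s=0$, then time-differentiation combined with elliptic regularity for $-\Delta u=F-\p_t^2 u$ to climb to order $s$, with existence via Galerkin---is the standard route to this result and is essentially the strategy of \cite{LLT86} and \cite[Section 2.4]{KKL01}; in particular your identification of the hidden-regularity step as the crux, and your warning not to lift $f$ naively (which would cost a derivative), are both on point.
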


Let us define the \emph{energy spaces} $E^s$ (see e.g.~\cite[Definition 3.5 on page 596]{CB08}) of functions in $\Omega\times [0,T]\subset \R^{n+1}$:
\[
 E^s=\bigcap_{0\leq k \leq s}C^k([0,T]\; H^{s-k}(\Omega)).
\]
These spaces are equipped with the norm
\begin{equation}\label{eq:energy_norm}
 \norm{u}_{E^s}=\sup_{0<t<T}\sum_{0\leq k \leq s}\norm{\p_t^ku(\ccdot,t)}_{H^{s-k}(\Omega)}.
\end{equation}
The reason why we are considering the spaces $E^s$ is that if $s>(n+1)/2$, then $E^s$ is an algebra (see e.g.~\cite{CB08}) and we have the norm estimate
\begin{equation}\label{banach_algebra}
 \norm{u\s v}_{E^s}\leq C_s \norm{u}_{E^s}\norm{v}_{E^s}, \text{ for all } u,v\in E^s.
\end{equation}

We record the following consequence of Theorem~\ref{thm:KKL-energy}, which we will use extensively. We have placed its proof in the Appendix~\ref{app:A}.
\begin{corollary}\label{cor:enery-estimate}
 Adopt the notation and assumptions of Theorem~\ref{thm:KKL-energy}. Assume in addition that
 \[
  \p_t^kF\in L^1([0,T]\; H^{s-k}(\Omega)), \quad k=0,1,\ldots,s.
 \]
Then the solution $u$ to~\eqref{wave-eq_with_data} satisfies 
\[
u\in E^{s+1}(\Omega) \text{ and } \p_\nu u|_{\Sigma}\in H^{s}(\Sigma)
\]
and
 \begin{equation}\label{energy_estim_Es}
\begin{split}
 &\norm{u}_{E^{s+1}} +\norm{\p_\nu u}_{H^{s}(\Sigma)} \leq c_{s,T}\Big( \sum_{0\leq k \leq s}\norm{\p_t^kF}_{L^1([0,T]\; H^{s-k}(\Omega))}  \\
 &\qquad\qquad\qquad\qquad\qquad\qquad\qquad+\norm{\psi_0}_{H^{s+1}(\Omega)}+\norm{\psi_1}_{H^{s}(\Omega)}+\norm{f}_{H^{s+1}(\Sigma)}\Big).
 \end{split}
\end{equation}
\end{corollary}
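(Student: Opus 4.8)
The plan is to bound each term of the energy norm \eqref{eq:energy_norm} separately by applying Theorem~\ref{thm:KKL-energy} to time-differentiated versions of the equation. Recall that $\norm{u}_{E^{s+1}}=\sup_{0<t<T}\sum_{k=0}^{s+1}\norm{\p_t^k u(\ccdot,t)}_{H^{s+1-k}(\Omega)}$. The term $k=0$ is exactly the $H^{s+1}$-bound and the term $k=s+1$ is exactly the $L^2$-bound on $\p_t^{s+1}u$; both of these, together with $\norm{\p_\nu u}_{H^s(\Sigma)}$, are already furnished by estimate \eqref{energy_estim}. Hence only the intermediate terms $1\le k\le s$ require work, and the whole corollary reduces to controlling $\norm{\p_t^k u(\ccdot,t)}_{H^{s+1-k}(\Omega)}$ for those $k$.

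First I would set $v_k:=\p_t^k u$. Since $\square=\p_t^2-\Delta$ commutes with $\p_t$, the function $v_k$ formally solves $\square v_k=\p_t^k F$ in $\Omega\times[0,T]$ with lateral data $v_k|_\Sigma=\p_t^k f$ and initial data $v_k|_{t=0}=\p_t^k u|_{t=0}$, $\p_t v_k|_{t=0}=\p_t^{k+1}u|_{t=0}$. Using the equation $\p_t^2 u=\Delta u+F$ repeatedly, these initial values can be written recursively as finite sums of the form $\Delta^i\psi_0$, $\Delta^i\psi_1$ and $\Delta^i\p_t^j F|_{t=0}$ with $j=k-2-2i$. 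I would then apply Theorem~\ref{thm:KKL-energy} at regularity level $s-k$ to the problem solved by $v_k$. Its hypotheses hold: the source satisfies $\p_t^k F\in L^1([0,T]\; H^{s-k}(\Omega))$ by the added assumption and $\p_t^{s-k}(\p_t^k F)=\p_t^s F\in L^1([0,T]\; L^2(\Omega))$ by the hypotheses of the theorem; the lateral data obeys $\p_t^k f\in H^{s+1-k}(\Sigma)$ because $f\in H^{s+1}(\Sigma)$; and the compatibility conditions of order $s-k$ for $v_k$ are inherited from the conditions of order $s$ in \eqref{eq:compatibility}. The conclusion delivers both the continuity $v_k\in C([0,T]\; H^{s+1-k}(\Omega))$, which gives the membership $u\in E^{s+1}$, and the pointwise bound on $\norm{v_k(\ccdot,t)}_{H^{s+1-k}(\Omega)}=\norm{\p_t^k u(\ccdot,t)}_{H^{s+1-k}(\Omega)}$.

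It remains to bound the initial data of $v_k$ by the data appearing on the right-hand side of \eqref{energy_estim_Es}. The elliptic factors are immediate, since $\norm{\Delta^i\psi_0}_{H^{s+1-k}}\lesssim\norm{\psi_0}_{H^{s+1}}$ and likewise for $\psi_1$. For the source traces I would use that $\p_t^j F\in L^1([0,T]\; H^{s-j})$ and $\p_t^{j+1}F\in L^1([0,T]\; H^{s-j-1})$, so that $\p_t^j F\in W^{1,1}([0,T]\; H^{s-j-1})\hookrightarrow C([0,T]\; H^{s-j-1})$ and therefore the trace $\p_t^j F|_{t=0}$ exists in $H^{s-j-1}$ with norm controlled by $\sum_{j}\norm{\p_t^j F}_{L^1([0,T]\; H^{s-j})}$; a direct count of derivatives shows $\Delta^i\p_t^j F|_{t=0}\in H^{s+1-k}$ with $j=k-2-2i$, exactly the target space. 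Summing the resulting estimates over $1\le k\le s$ and adjoining the $k=0$, $k=s+1$ and boundary terms from \eqref{energy_estim} yields \eqref{energy_estim_Es}.

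The one genuinely delicate point—and the step I expect to be the main obstacle—is making the differentiation $v_k=\p_t^k u$ rigorous, since a priori $u$ is only known to lie in $X^{s+1}$. I would handle this by a standard regularization argument: approximate $(F,\psi_0,\psi_1,f)$ by smooth data satisfying the compatibility conditions, for which the solutions are smooth and all the above manipulations are legitimate, prove the uniform estimate \eqref{energy_estim_Es} at that level, and then pass to the limit, using the estimate itself together with the uniqueness in Theorem~\ref{thm:KKL-energy} to identify the limit with $u$ and its time derivatives. The bookkeeping of traces and of the compatibility conditions of order $s-k$—ensuring that every term $\Delta^i\p_t^j F|_{t=0}$ lands in precisely the Sobolev space required by the theorem at level $s-k$—is the part that must be carried out with care.
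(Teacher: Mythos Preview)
Your argument is correct but proceeds differently from the paper. The paper differentiates the equation in the \emph{spatial} variables: it sets $\tilde u=\partial_{x^l}u$, applies Theorem~\ref{thm:KKL-energy} at level $s-1$ to obtain $\tilde u\in X^{s}$ (in particular $\partial_t^{s}\partial_{x^l} u\in C([0,T];L^2)$, hence $\partial_t^{s}u\in C([0,T];H^{1})$), and then iterates with higher spatial derivatives $\partial_{x_{l_1}}\cdots\partial_{x_{l_k}}u$ to fill in the intermediate rungs $u\in C^{s+1-k}([0,T];H^{k})$. You instead differentiate in \emph{time}, setting $v_k=\partial_t^{k}u$ and applying the theorem at level $s-k$ directly.

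Each choice buys something. Your time-derivative route has the advantage that $\partial_t$ is tangent to $\Sigma=\partial\Omega\times[0,T]$, so the lateral data $\partial_t^{k}f\in H^{s+1-k}(\Sigma)$ is immediate; the paper's spatial differentiation writes the lateral data as ``$\partial_{x^l}f$'', which strictly speaking is not defined on $\Sigma$ and needs an extra step (boundary flattening or a tangential/normal splitting) that the paper does not spell out. Conversely, the paper's approach keeps the initial data trivial ($\partial_{x^l}u|_{t=0}=\partial_{x^l}\psi_0$, etc.) and never needs time-traces of $F$, whereas you must extract $\partial_t^{j}F|_{t=0}$ via the embedding $W^{1,1}([0,T];X)\hookrightarrow C([0,T];X)$. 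Your handling of that step, including the derivative count $j=k-2-2i$ placing $\Delta^{i}\partial_t^{j}F|_{t=0}$ exactly in $H^{s+1-k}$, and the approximation argument to justify the formal differentiation, is standard and goes through.
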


The proofs of the following results are quite standard, we postpone them to the Appendix~\ref{app:A} for the interested reader.

\begin{lemma}\label{lemma:nonlinear-solutions}
Let $s+1>(n+1)/2$. Suppose  that $a\in C_c^\infty(\Omega\times [0,T])$ satisfies $\norm{a}_{C^s(\Omega\times[0,T])}\leq L$.
There is $\kappa>0$ and $\rho>0$ such that if $f\in H^{s+1}(\Sigma)$ satisfies $\Vert f\Vert_{H^{s+1}} \leq \kappa$ and $\p_t^kf|_{t=0}=0$, $k=0,\ldots,s$, on $\p\Omega$,
there is a unique solution to 
\begin{equation}\label{eq:nonlinear_equation_lemma}
\begin{cases}
\square u + au^m   =0, &\text{in } \Omega\times[0,T],\\
u=f, &\text{on } \p\Omega\times[0,T], \\
u\big|_{t=0} = \partial_t u\big|_{t=0} = 0,&\text{in }\Omega
\end{cases}
\end{equation}
in the ball 
\[
B_\rho(0):= \{u \in E^{s+1} \mid \Vert u \Vert_{E^{s+1}} < \rho\}\subset E^{s+1}.
\]
Furthermore, the solution satisfies the estimate
\begin{equation}\label{eq:solution-estimate1}
\Vert u\Vert_{E^{s+1}} \leq C_0\s  \Vert f\Vert_{H^{s+1}(\Sigma)},
\end{equation}
where $C_0>0$ is a constant depending only on $s$ and $T$.
\end{lemma}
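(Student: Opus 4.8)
The plan is to prove Lemma~\ref{lemma:nonlinear-solutions} by a standard fixed-point (contraction mapping) argument in the energy space $E^{s+1}$, treating the non-linear term $au^m$ as a source to be controlled. First I would set up the solution map. Given a candidate $u\in B_\rho(0)\subset E^{s+1}$, I define $\Phi(u)$ to be the unique solution $w$ of the \emph{linear} problem $\square w = -a u^m$ with boundary data $f$ and zero initial data, as furnished by Corollary~\ref{cor:enery-estimate}. Since $\p_t^k f|_{t=0}=0$ for $k=0,\ldots,s$ and the initial data vanish, the compatibility conditions of order $s$ hold (the source $F=-au^m$ and its relevant time derivatives vanish at $t=0$ because $a$ is compactly supported in $\Omega\times[t_1,t_2]$ with $t_1>0$, so $F$ vanishes near $t=0$). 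Thus $\Phi$ is well-defined and a fixed point of $\Phi$ is exactly a solution of~\eqref{eq:nonlinear_equation_lemma}.

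The two things to verify are that $\Phi$ maps $B_\rho(0)$ into itself and that it is a contraction, for suitable $\kappa,\rho$. For the self-map property I would apply the energy estimate~\eqref{energy_estim_Es} with $F=-au^m$, $\psi_0=\psi_1=0$, giving
\[
\norm{\Phi(u)}_{E^{s+1}} \leq c_{s,T}\Big(\sum_{0\leq k\leq s}\norm{\p_t^k(au^m)}_{L^1([0,T]\; H^{s-k}(\Omega))} + \norm{f}_{H^{s+1}(\Sigma)}\Big).
\]
The source term is handled using the algebra property~\eqref{banach_algebra} of $E^{s+1}$ (valid since $s+1>(n+1)/2$): one bounds $\norm{au^m}_{E^s}\lesssim \norm{a}_{E^s}\norm{u}_{E^s}^m \lesssim L\,\norm{u}_{E^{s+1}}^m \leq L C_s^{m-1}\rho^m$, and the $L^1$-in-time norms of $\p_t^k(au^m)$ are dominated by the $E^s$-norm up to the factor $T$. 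Hence $\norm{\Phi(u)}_{E^{s+1}}\leq c_{s,T}(C' L \rho^m + \kappa)$. Choosing $\rho$ small so that $c_{s,T}C'L\rho^{m-1}\leq \tfrac12$ and then $\kappa$ small so that $c_{s,T}\kappa \leq \tfrac{\rho}{2}$ yields $\norm{\Phi(u)}_{E^{s+1}}<\rho$, so $\Phi:B_\rho(0)\to B_\rho(0)$.

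For the contraction estimate I would write the telescoping identity $u^m - \tilde u^m = (u-\tilde u)\sum_{j=0}^{m-1} u^j \tilde u^{m-1-j}$ so that $\square(\Phi(u)-\Phi(\tilde u)) = -a(u^m-\tilde u^m)$ with zero boundary and initial data, then apply~\eqref{energy_estim_Es} again together with~\eqref{banach_algebra} to get $\norm{\Phi(u)-\Phi(\tilde u)}_{E^{s+1}}\leq c_{s,T} C'' L \, m\, \rho^{m-1}\norm{u-\tilde u}_{E^{s+1}}$; shrinking $\rho$ further makes the constant $<1$. The Banach fixed-point theorem then gives a unique fixed point $u\in B_\rho(0)$, which is the desired solution, and uniqueness in the ball is immediate. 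Finally, estimate~\eqref{eq:solution-estimate1} follows from applying~\eqref{energy_estim_Es} once more to the fixed point $u=\Phi(u)$ and absorbing the non-linear term into the left-hand side: $\norm{u}_{E^{s+1}}\leq c_{s,T}(C'L\rho^{m-1}\norm{u}_{E^{s+1}} + \norm{f}_{H^{s+1}})\leq \tfrac12\norm{u}_{E^{s+1}} + c_{s,T}\norm{f}_{H^{s+1}}$, yielding $C_0=2c_{s,T}$. The main technical obstacle, and the only non-routine point, is bookkeeping the mixed space-time derivatives $\p_t^k(au^m)$ in the $L^1_tH^{s-k}_x$ norms required by Corollary~\ref{cor:enery-estimate} and confirming they are all controlled by the single algebra norm $\norm{au^m}_{E^s}$; once the algebra property~\eqref{banach_algebra} is invoked uniformly, everything else is the standard smallness juggling of $\kappa$ and $\rho$.
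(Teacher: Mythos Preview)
Your proof is essentially the same as the paper's: a Banach fixed-point argument in $E^{s+1}$ using Corollary~\ref{cor:enery-estimate} for the linear estimate, the algebra property~\eqref{banach_algebra} to control $au^m$, and the factorization $u^m-\tilde u^m=(u-\tilde u)\sum_{j=0}^{m-1} u^j\tilde u^{m-1-j}$ for the contraction; the paper derives~\eqref{eq:solution-estimate1} from the a priori Banach-iteration bound $\norm{u}\leq (1-\widetilde C)^{-1}\norm{\Phi(0)}$ rather than by absorption, but your route is equally valid. One small slip: the lemma only assumes $a\in C_c^\infty(\Omega\times[0,T])$, not that $a$ is supported in $\Omega\times[t_1,t_2]$ with $t_1>0$, so your justification of the compatibility conditions is not available as stated; the paper instead runs the iteration in the closed subspace $\{u\in E^{s+1}:\p_t^k u|_{t=0}=0,\ k=0,\ldots,s\}$, which $\Phi$ preserves, and this is what guarantees $\p_t^k(au^m)|_{t=0}=0$ at each step.
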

%
%

We are ready to consider an inverse problem for the non-linear hyperbolic equation
\begin{equation}\label{eq:non-linear-wave}
\begin{cases}
\square u + au^m   =0, &\text{in } \Omega\times[0,T],\\
u=f, &\text{on } \p\Omega\times[0,T], \\
u\big|_{t=0} = \partial_t u\big|_{t=0} = 0,&\text{in }\Omega.
\end{cases}
\end{equation}
Our measurement data is the Dirichlet-to-Neumann map, which is a map from a small ball in $H^{s+1}(\Sigma)$ into $H^{s}(\Sigma)$ and is defined as follows.
\begin{definition}[Dirichlet-to-Neumann map]
 Let $\Omega$ be an open subset of $\R^{n}$ and let $s+1>(n+1)/2$, $s\in \N$. 
 Let $\rho>0$ be such for all $f$ with $\norm{f}_{H^{s+1}(\Sigma)}<\kappa$ and $\p_t^kf|_{t=0}=0$, $k=0,\ldots, s$, the problem~\eqref{eq:intro_wave-eq} has a unique solution $u\in E^{s+1}$ satisfying $\norm{u}_{E^{s+1}}<\rho$. The Dirichlet-to-Neumann map $\Lambda$ is the map $\{f\in H^{s+1}(\Sigma): \norm{f}_{H^{s+1}(\Sigma)}<\kappa\} \to H^s(\Sigma)$ given as
 \begin{equation}\label{C2N-map}
  \Lambda(f)=\p_\nu u \text{ on } \Sigma, \quad f\in H^{s+1}(\Sigma), \ \norm{f}_{H^{s+1}(\Sigma)}<\kappa, 
 \end{equation}
 where $u$ is the unique solution to~\eqref{eq:intro_wave-eq} with $\norm{u}_{E^{s+1}}<\rho$.
\end{definition}

We end this section by an expansion formula for a family of solutions depending on small parameters.

\begin{proposition}\label{cor:epsilon_expansion}
Let $s+1> (n+1)/2$. 
Suppose that $a\in C_c^\infty([0,T]\times \Omega)$ and $\norm{a}_{C^{s+1}}\leq L$. There is $\kappa>0$ and $\rho>0$ with the following property: If $f_j\in H^{s+1}(\Sigma)$ and $\eps_j>0$ satisfy $\Vert \eps_1f_1+\cdots\eps_mf_m\Vert_{H^{s+1}} \leq \kappa$ and $\p_t^kf_j|_{t=0}=0$ on $\p\Omega$, $k=0,\ldots,s$, $j=1,\ldots, m$,
 then there exists 
a unique solution $u$ to
 \begin{equation}\label{eq:epsilons}
\begin{cases}
\square u + au^m   =0, \qquad \qquad\qquad \qquad \, \text{in } \Omega\times[0,T],\\
u=\eps_1f_1+\eps_2f_2+\cdots +\eps_mf_m, \quad \, \text{ on } \p\Omega\times[0,T], \\
u\big|_{t=0} = 0,\quad \partial_t u\big|_{t=0} = 0,\qquad\qquad \text{in } \Omega
\end{cases}
\end{equation}
in the ball 
\[
B_\rho(0):= \{u \in E^{s+1} \mid \Vert u \Vert_{E^{s+1}} < \rho\}\subset E^{s+1}.
\]
The solution satisfies the estimate
\begin{equation}\label{eq:solution-estimate2}
\Vert u\Vert_{E^{s+1}} \leq C_0\s  \Vert \varepsilon_1 f_1 +\cdots + \varepsilon_m f_m\Vert_{H^{s+1}(\Sigma)},
\end{equation}
where $C_0>0$ is a constant depending only on $s$, $T$ and $L$. Furthermore, $u$ has the following expansion in $\eps_1,\ldots, \eps_m$ in terms of the multinomial coefficients
\begin{equation}\label{id:expam_epsilons}
u=\epsilon_1 v_1+\cdots + \epsilon_m v_m +\sum_{k_1,\s k_2,\ldots, k_m}\binom{m}{k_1,\s k_2,\cdots, k_m}\epsilon_1^{k_1}\cdots \epsilon_m^{k_m}w_{(k_1,\ldots, k_m)} 
+\mathcal{R}.
\end{equation}
Here for $j=1,\ldots, m$ the functions $v_j$ satisfy
\begin{equation}\label{eq:norm_21}
\begin{cases}
\square v_j=0, \quad \text{in } \Omega\times[0,T],\\
v_j=f_j, \quad \text{ on } \p\Omega\times[0,T], \\
v_j\big|_{t=0} = 0,\quad \partial_t v_j\big|_{t=0} = 0,\quad \text{in } \Omega
\end{cases}
\end{equation}
and for $k_j\in \{1,\ldots, m\}$ the functions $w_{k_1,\ldots, k_m}$ satisfy
\begin{equation}\label{eq:norm_2}
\begin{cases}
\square w_{k_1,\ldots, k_m} +  a\s  v_1^{k_1}\cdots v_m^{k_m}  = 0, \quad \text{in } \Omega\times[0,T],\\
w_{k_1,\ldots, k_m}=0, \quad \text{ on } \p\Omega\times[0,T], \\
w_{k_1,\ldots, k_m}\big|_{t=0} = 0,\quad \partial_t w_{k_1,\ldots, k_m}\big|_{t=0} = 0,\quad \text{in } \Omega
\end{cases}
\end{equation}
and
\begin{equation}\label{est:square_R}
\begin{split}
\norm{\mathcal{R}}_{E^{s+2}}&\leq c(s, T)\s   \norm{a}_{E^{s+1}}^2 \Vert \varepsilon_1 f_1 +\cdots + \varepsilon_m f_m\Vert_{H^{s+1}(\Sigma)}^{2m-1},  \\
\norm{\square\, \mathcal{R}}_{E^{s+1}} &\leq C(s, T)\s   \norm{a}_{E^{s+1}}^2 \Vert \varepsilon_1 f_1 +\cdots + \varepsilon_m f_m\Vert_{H^{s+1}(\Sigma)}^{2m-1}.
\end{split}
\end{equation}
\end{proposition}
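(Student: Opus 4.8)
My plan is to obtain the existence, uniqueness and the bound \eqref{eq:solution-estimate2} for free from the already-proved Lemma~\ref{lemma:nonlinear-solutions}, and then to construct the expansion \eqref{id:expam_epsilons} by hand and estimate its tail. For the first part, I would simply apply Lemma~\ref{lemma:nonlinear-solutions} to the single boundary datum $f:=\eps_1 f_1+\cdots+\eps_m f_m$: by hypothesis $\norm{f}_{H^{s+1}(\Sigma)}\leq\kappa$ and $\p_t^k f|_{t=0}=0$, so there is a unique small solution $u\in B_\rho(0)$ and \eqref{eq:solution-estimate1} is exactly \eqref{eq:solution-estimate2}. It then remains to produce the expansion. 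I would first fix the auxiliary functions: let $v_j$ solve \eqref{eq:norm_21} and $w_{k_1,\ldots,k_m}$ solve \eqref{eq:norm_2}. By Corollary~\ref{cor:enery-estimate}, $v_j\in E^{s+1}$ with $\norm{v_j}_{E^{s+1}}\leq C\norm{f_j}_{H^{s+1}(\Sigma)}$; and since the source $a\,v_1^{k_1}\cdots v_m^{k_m}$ lies in the algebra $E^{s+1}$ (using \eqref{banach_algebra} and $a\in C_c^\infty\subset E^{s+1}$), the same corollary applied at regularity level $s+1$ gives $w_{k_1,\ldots,k_m}\in E^{s+2}$, a one-derivative gain from the source.

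Write $\eta:=\norm{f}_{H^{s+1}(\Sigma)}$, let $k=(k_1,\ldots,k_m)$ with $|k|:=k_1+\cdots+k_m$, $\eps^k:=\eps_1^{k_1}\cdots\eps_m^{k_m}$, $w_k:=w_{k_1,\ldots,k_m}$, and set $V:=\sum_{j=1}^m\eps_j v_j$ and $W:=\sum_{|k|=m}\binom{m}{k_1,\ldots,k_m}\eps^k w_k$. By construction $\square V=0$, and summing \eqref{eq:norm_2} against the multinomial weights gives $\square W=-a\bigl(\sum_{j=1}^m\eps_j v_j\bigr)^m=-a\,V^m$. The plan is to \emph{define} the remainder by $\mathcal{R}:=u-V-W$ and estimate it directly, which sidesteps differentiating $u$ in the parameters. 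Since $u$ and $V$ share the boundary datum $f$ and vanish together with their $t$-derivatives at $t=0$, the difference $g:=u-V$ solves $\square g=\square u=-a\,u^m$ with zero boundary and initial data; the energy estimate \eqref{energy_estim_Es} together with \eqref{banach_algebra} and \eqref{eq:solution-estimate2} then gives $\norm{g}_{E^{s+1}}\leq C\norm{a}_{E^{s+1}}\norm{u}_{E^{s+1}}^m\leq C\norm{a}_{E^{s+1}}\eta^m$, so $g=\mathcal{O}_{E^{s+1}}(\norm{a}\,\eta^m)$.

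Next I would compute $\square\mathcal{R}=\square g-\square W=-a\,(u^m-V^m)$. Writing $u=V+g$ and expanding, $u^m-V^m=\sum_{\ell=1}^m\binom{m}{\ell}V^{m-\ell}g^\ell$, hence
\[
\square\mathcal{R}=-a\sum_{\ell=1}^m\binom{m}{\ell}V^{m-\ell}g^\ell .
\]
Estimating each summand in the algebra $E^{s+1}$ via \eqref{banach_algebra} and inserting $\norm{V}_{E^{s+1}}\leq C\eta$ and $\norm{g}_{E^{s+1}}\leq C\norm{a}\eta^m$, the $\ell$-th term is bounded by $C\norm{a}^{1+\ell}\eta^{m+\ell(m-1)}$. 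The exponent of $\eta$ is $\geq 2m-1$ for every $\ell\geq1$, the case $\ell=1$ producing the leading $\norm{a}^2\eta^{2m-1}$; the terms with $\ell\geq2$ carry extra factors $(\norm{a}\,\eta^{m-1})^{\ell-1}$ absorbed using $\norm{a}\leq L$ and $\eta\leq\kappa$. This yields $\norm{\square\mathcal{R}}_{E^{s+1}}\leq C(s,T)\norm{a}_{E^{s+1}}^2\eta^{2m-1}$, the second line of \eqref{est:square_R}. Finally, $\mathcal{R}$ has zero boundary and initial data, so applying Corollary~\ref{cor:enery-estimate} at level $s+1$ to $\square\mathcal{R}\in E^{s+1}$ gives $\norm{\mathcal{R}}_{E^{s+2}}\leq C\norm{\square\mathcal{R}}_{E^{s+1}}\leq c(s,T)\norm{a}_{E^{s+1}}^2\eta^{2m-1}$, the first line of \eqref{est:square_R}.

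The conceptually routine but technically delicate part is the bookkeeping in the last two paragraphs: one must track \emph{simultaneously} the homogeneity degree in $(\eps_1,\ldots,\eps_m)$ and the power of $\norm{a}$ so that exactly $\norm{a}^2\eta^{2m-1}$ survives, and one must check that every product lands in the algebra $E^{s+1}$ so that Corollary~\ref{cor:enery-estimate} can be reapplied to recover the one-derivative gain to $E^{s+2}$. The smallness hypotheses $\eta\leq\kappa$ and $\norm{a}\leq L$ are precisely what allow the higher binomial terms ($\ell\geq2$) to be absorbed into the constant, so that the stated bound, which isolates the single worst term $\ell=1$, holds uniformly.
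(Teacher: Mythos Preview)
Your proposal is correct and follows essentially the same route as the paper: invoke Lemma~\ref{lemma:nonlinear-solutions} for existence and \eqref{eq:solution-estimate2}, define $\mathcal{R}:=u-V-W$, compute $\square\mathcal{R}=-a(u^m-V^m)$, and bound it using the intermediate estimate on $g=u-V$ together with the algebra property of $E^{s+1}$ and Corollary~\ref{cor:enery-estimate}. The only cosmetic difference is that the paper factors $u^m-V^m=(u-V)P_{m-1}(u,V)$ with $P_{m-1}(a,b)=\sum_{k=0}^{m-1}a^{m-1-k}b^k$, which yields the clean bound $\norm{a}^2\eta^{2m-1}$ in one stroke, whereas your binomial expansion $\sum_{\ell\ge1}\binom{m}{\ell}V^{m-\ell}g^\ell$ produces higher-order terms in $\norm{a}$ that you then absorb via $\norm{a}\le L$ and $\eta\le\kappa$; both arguments are equivalent.
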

\begin{proof}
First, equation \eqref{eq:solution-estimate2} follows from \eqref{eq:solution-estimate1}.
Then we note that $\mathcal{F}= u- (\eps_1 v_1+\eps_2v_2+\cdots + \eps_m v_m)$ satisfies
 \[
\begin{cases}
\square \mathcal{F} =-au^m, & \text{in } \Omega\times[0,T],\\
\mathcal{F}=0, & \text{on } \p\Omega\times[0,T], \\
\mathcal{F}\big|_{t=0} = 0,\partial_t \mathcal{F}\big|_{t=0} = 0,&\text{in } \Omega.
\end{cases}
\]
Hence, by~\eqref{eq:solution-estimate2} and by using the energy estimate from Corollary \ref{cor:enery-estimate}, one obtains
\begin{equation}\label{est:iterative}
\norm{\mathcal{F}}_{E^{s+2}} \leq C(s, T) \norm{au^m}_{E^{s+1}} \leq C(s, T) \s \norm{a}_{E^{s+1}}  \s  \Vert \varepsilon_1 f_1 +\cdots + \eps_m f_m\Vert_{H^{s+1}(\Sigma)}^m.
\end{equation}
Here we have used that $E^{s+1}$ is an algebra and the estimate \eqref{eq:solution-estimate2}:
\[
 \norm{u}_{E^{s+1}}\leq C \norm{\varepsilon_1 f_1 +\cdots + \eps_m f_m}_{H^{s+1}(\Sigma)}.
\]

One step further, taking into account \eqref{eq:norm_2}, the function $\mathcal{R}$ given by
\begin{equation}\label{remainder_term_u}
 \mathcal{R}:= u-(\epsilon_1 v_1+ \cdots + \epsilon_m v_m)- \sum_{k_1,\s k_2,\ldots, k_m}\binom{m}{k_1,\s k_2,\cdots, k_m}\epsilon_1^{k_1}\cdots \epsilon_m^{k_m}w_{k_1,\ldots, k_m} 
 \end{equation}
 satisfies 
 \begin{equation}\label{equation:remainder_term}
\begin{cases}
\square \mathcal{R} =-au^m + a(\eps_1v_1+ \eps_2 v_2+\cdots + \eps_m v_m)^m, & \text{in } \Omega\times[0,T],\\
\mathcal{R}=0, &\text{on } \p\Omega\times[0,T], \\
\mathcal{R}\big|_{t=0} = 0,\quad \partial_t \mathcal{R}\big|_{t=0} = 0,&\text{in } \Omega.
\end{cases}
\end{equation}
Using this identity together with the estimate \eqref{eq:solution-estimate2} and \eqref{est:iterative}, we obtain
\begin{align*}
\norm{\square&\, \mathcal{R}}_{E^{s+1}} \leq C(s, T)\norm{-au^m + a(\eps_1v_1+\cdots +\eps_m v_m)^m}_{E^{s+1}}\\
& = C(s, T)\norm{a\s (u-(\eps_1v_1+\cdots \eps_m v_m)) \s P_{m-1}(u,\eps_1v_1+\cdots +\eps_m v_m)}_{E^{s+1}}\\
& \leq C(s,T) \norm{a}_{E^{s+1}}\norm{\mathcal{F}}_{E^{s+2}}\norm{P_{m-1}(u,\eps_1v_1+\cdots +\eps_m v_m)}_{E^{s+1}} \\ 
& \leq C(s,T) \norm{a}_{E^{s+1}}^2 \s  \Vert \varepsilon_1 f_1 +\cdots +\varepsilon_m f_m\Vert_{H^{s+1}(\Sigma)}^m \\
& \qquad \qquad  \qquad \qquad  \times \Big(\sum_{l=0}^{m-1}\norm{u^{m-1-l}\s (\eps_1v_1+\cdots +  \eps_m v_m)^l }_{E^{s+1}}\Big), \\
& \leq C(s,T) \norm{a}_{E^{s+1}}^2 \s  \Vert \varepsilon_1 f_1 +\cdots + \varepsilon_m f_m\Vert^{2m-1}_{H^{s+1}(\Sigma)}.
\end{align*}
Here we wrote
   \begin{equation*}
     u^m-v^m=(u-v)P_{m-1}(u,v),
    \end{equation*}
where $P_{m-1}(a,b)=\sum_{k=0}^{m-1} a^{m-1-k}b^k$. In the last inequality we used~\eqref{eq:solution-estimate2}. Thus it follows from the energy estimate~\eqref{energy_estim_Es} that
\[
 \norm{\mathcal{R}}_{E^{s+2}}\leq c(s, T)\s   \norm{a}_{E^{s+1}}^2 \Vert \varepsilon_1 f_1 +\cdots + \varepsilon_m f_m\Vert_{H^{s+1}(\Sigma)}^{2m-1}.
\qedhere\]
\end{proof}

We will calculate mixed finite differences $D^m_{\eps_1\eps_2\cdots\eps_m}$ of the solution $u$ of~\eqref{eq:epsilons}. We have that
\begin{equation}\label{eq:mixed_difference}
 D^m_{\eps_1\eps_2\cdots\eps_m}\big|_{\eps=0} u=m!\s w_{1,1,\ldots,1}+ D^m_{\eps_1\eps_2\cdots\eps_m}\big|_{\eps=0}\mathcal{R}.
\end{equation}
Here we write $\eps=0$ when $\eps_1=\ldots=\eps_m=0$.
For more details, we refer the reader to Appendix \ref{app:C}.
The finite difference is defined as usual by
\begin{equation}\label{eq:fin_diff}
D_{\eps_1,\ldots,\eps_m}^m\big|_{\eps=0} u_{\eps_1f_1+\cdots+\eps_mf_m}
=
\frac{1}{\eps_1\cdots\eps_m}\sum_{\sigma\in\{0,1\}^m}
(-1)^{|\sigma|+m}u_{\sigma_1\eps_1 f_1+\ldots+\sigma_m\eps_m f_m}.
\end{equation}
For example, when $m=2$, we have
\[
D^2_{\epsilon_1, \epsilon_2}\big|_{\eps_1=\eps_2=0} u:= \frac{1}{\epsilon_1 \epsilon_2} \left(u_{\eps_1f_1+\eps_2f_2}- u_{\eps_1f_1}-u_{\eps_2f_2}\right).
\]
Here we used the fact that the solution to \eqref{eq:epsilons} with $\eps_1=\eps_2=0$ is identically zero.

Let $v_0$ be an auxiliary function solving $\square v_0=0$ with $v_0|_{t=T} =\p_t v_0|_{t=T} = 0$ in  $\Omega$. By integrating by parts we obtain
 \begin{align}
 &\int_{\Sigma}v_0\s D_{\eps_1,\ldots,\eps_m}^m\big|_{\eps=0}\Lambda(\eps_1f_1+\cdots+\eps_mf_m) \s \d S \\
 &\quad=\int_{\Sigma}v_0\s D_{\eps_1,\ldots,\eps_m}^m\big|_{\eps=0}\p_\nu u_{\eps_1f_1+\cdots+\eps_mf_m} \s \d S \\
 &\quad=m!\int_{\Omega\times[0,T]} v_0 \square\s w_{1,1,\ldots,1} \s \d x \s \d t 
 +
  \frac{1}{\eps_1\cdots\eps_m}\int_{\Omega\times[0,T]} v_0 \square\s\widetilde{\mathcal{R}}\s \d x \s \d t . 
\end{align}
Here we denoted 
\begin{equation}\label{eq:tildeR}
\tildeR := \eps_1\eps_2 \ldots \eps_m D_{\eps_1,\ldots,\eps_m}^m\big|_{\eps=0} \mathcal{R},
\end{equation} 
and $\tildeR$ satisfies
\begin{equation}\label{est:square_tildeR}
\begin{split}
\norm{\widetilde{\mathcal{R}}}_{E^{s+2}}&\leq c(s, T)\s   \norm{a}_{E^{s+1}}^2 \sum_{\sigma\in\{0,1\}^m}
(-1)^{|\sigma|+m}\Vert \sigma_1\varepsilon_1 f_1 +\cdots + \sigma_m\varepsilon_m f_m\Vert_{H^{s+1}(\Sigma)}^{2m-1},  \\
\norm{\square\, \widetilde{\mathcal{R}}}_{E^{s+1}} &\leq C(s, T)\s   \norm{a}_{E^{s+1}}^2 \sum_{\sigma\in\{0,1\}^m}
(-1)^{|\sigma|+m}\Vert \sigma_1\varepsilon_1 f_1 +\cdots + \sigma_m\varepsilon_m f_m\Vert_{H^{s+1}(\Sigma)}^{2m-1}.
\end{split}
\end{equation}
We have arrived to the following integral identity which connects the potential $a$ with the DN-map $\Lambda$.\\
\noindent\textbf{Integral identity:}
\begin{equation}\label{eq:integral_identity_finite_difference}
\begin{aligned}
  -m!\int_{\Omega\times [0,T]} a\s v_0\s v_1\s v_2\cdots v_m \s \d x \s \d t 
  &=
\int_{\Sigma}v_0\s D_{\eps_1,\ldots,\eps_m}^m\big|_{\eps=0}\Lambda(\eps_1f_1+\cdots+\eps_mf_m)\s \d S\\&\qquad + \frac{1}{\eps_1\eps_2\cdots\eps_m}\int_{\Omega\times [0,T]} v_0\square\s \widetilde{\mathcal{R}}\s \d x \s \d t .
\end{aligned}
\end{equation}
We will use this identity several times throughout the text.
%
 
%


\begin{remark}
By taking $\eps_j\to 0$, the integral identity \eqref{eq:integral_identity_finite_difference} implies 
\begin{equation}\label{eq:integral_identity_finite_difference B}
\begin{aligned}
 \int_{\Omega\times [0,T]} a\s v_0\s v_1\s v_2\cdots v_m \s \d x \s \d t 
  &=-\frac 1{m!}
\int_{\Sigma}\psi \s \p_{\eps_1}\p_{\eps_2}\ldots \p_{\eps_m}\big|_{\eps=0}\Lambda(\eps_1f_1+\cdots+\eps_mf_m)\s \d S,
\end{aligned}
\end{equation}
where $\psi=v_0|_{\Sigma}$ is a measurement function and $v_j$, $j=0,1,2,\dots,m$, are the solutions of the linearized equation $\square v_j=0$. We note that similar identities 
are encountered in the study of inverse problems for elliptic equations, e.g. $\Delta U(x)+q(x)U
(x)^m=0,$ $U|_{\p \Omega}=f$, with the solutions $V_j$ of the linearized equation $\Delta V_j(x)=0$, see \cite{LLLS19b}.
As the constant function $V_0=1$ satisfies the linearized equation, one could use a similar approach to the one used in this paper to study the above non-linear elliptic equation with the one-dimensional
boundary map $f\mapsto \langle \Psi,\p_\nu U|_{\p \Omega}\rangle_{L^2(\p \Omega)}$ and the  
measurement function $\Psi=1$. However, these considerations are outside the context of this paper.
\end{remark}

\section{Proofs of the main results}\label{sec:main-results-1D}

\subsection{Proof of Theorems \ref{thm:stability} and \ref{thm:noise} in $1+1$ dimensions with $m=2$}
We prove Theorem \ref{thm:stability} in $\R^{1+1}$ with $m=2$ separately. We do this to present the main point of the proof of Theorem \ref{thm:stability} in a simple case. 
The proof will be divided into three steps. 

\noindent\textbf{Step 1.} Let $\eps_j>0$, $j=1,2$, and $f_j\in H^{s+1}(\Sigma)$ be functions that satisfy $\p_t^kf_j|_{t=0}=0$, $k=0,\ldots,s$, on $\p\Omega$. 
Suppose also that $\norm{\eps_1 f_1 +\eps_2f_2}_{H^{s+1}(\Omega\times[0,T])} \leq \kappa$ for $\kappa>0$ small enough, as in Lemma \ref{lemma:nonlinear-solutions}.
%
Then, for $l=1,2$, we have that the problem
 \begin{equation}
\begin{cases}
\square u_{l} + a_l\s u_l^2   =0,  \  \ \qquad \qquad \text{in } \Omega\times[0,T],\\
u_l=\eps_1f_1+\eps_2f_2, \ \, \quad \qquad \text{ on } \p\Omega\times[0,T],\\
u_l\big|_{t=0} = 0,\quad \s \partial_t u_l\big|_{t=0} = 0,\text{ in } \Omega
\end{cases}
\end{equation}
has a unique solution $u_l$ with an expansion of the form
\begin{equation}\label{eq:expansion_ul_proof}
 u_{l}=\eps_1v_{l,1}+\eps_2v_{l,2} + 2 \eps_1\eps_2 w_{l,(1,1)} +\eps_1^2w_{l,(2,0)}+\eps_2^2w_{l,(0,2)} +\mathcal{R}_l,
\end{equation}
where $v_{l,j}$ and $w_{l,(k_1,k_2)}$, $l,k_1,k_2=1,2$, solve~\eqref{eq:norm_21} and~\eqref{eq:norm_2} with $a$ replaced with $a_l$. Note that since the equation for $v_{l,j}$ is independent of $a_l$, we have by the uniqueness of solutions that
\[
 v_{1,j}=v_{2,j}=:v_j, \quad j=1,2.
\]

The correction term $\mathcal{R}_l$ satisfies
\begin{equation}\label{eq:estim_for_Rl}
 \norm{\square\, \mathcal{R}_l}_{E^{s+1}} \leq C(s, T)\s   \norm{a_l}_{E^{s+1}}^2 \Vert \varepsilon_1 f_1 + \varepsilon_2 f_2\Vert_{H^{s+1}(\Sigma)}^{3}.
\end{equation}
We have that the mixed second difference $ D^2_{\eps_1\eps_2}\big|_{\eps_1=\eps_2=0}$ of $u_l$ is 
\[
 D^2_{\eps_1\eps_2}\big|_{\eps_1=\eps_2=0}u_l=2 w_{l,(1,1)}+\frac{1}{\eps_1\eps_2}\mathcal{R}_l.
\]
Consequently
\begin{equation}\label{eq:difference_finite}
 \square\s D^2_{\eps_1\eps_2}\big|_{\eps_1=\eps_2=0}u_l=- 2 a_l v_{1}v_{2}+\frac{1}{\eps_1\eps_2}\square\tildeR_l,
\end{equation}
where $\tildeR_l := \eps_1\eps_2  D^2_{\eps_1\eps_2}\big|_{\eps_1=\eps_2=0}\mathcal{R}_l$ similarly as in \eqref{eq:tildeR}.

As the first step we derive a useful integral identity which relates the DN maps $\Lambda_1$ and $\Lambda_2$ 
with the information of the unknown potentials $a_1$ and $a_2$ in $\Omega\times [0, T]$. 
We recall that $v_0$ is an auxiliary function given in \eqref{eq:defv_0}.
%
%
Combining \eqref{eq:difference_finite} and the fact that $v_0$ satisfies $\square v_0=0$ with $v_0|_{t=T} =\p_t v_0|_{t=T} = 0$ in  $\Omega$ we get
\begin{equation}\label{id:integral_identity}
\begin{aligned}
&   \int_{\Sigma}v_0  \s D^2_{\eps_1\eps_2}\big|_{\eps_1=\eps_2=0}\left[(\Lambda_1- \Lambda_2)(\eps_1f_1+ \eps_2f_2)\right]\d S\\
&\qquad=  \int_{\Sigma} v_0 \s D^2_{\eps_1\eps_2}\big|_{\eps_1=\eps_2=0}\left[ \partial_\nu u_{1}-\partial_\nu u_{2} \right]\d S\\
&\qquad=  \int_{\partial \Omega \times [0, T]} v_0 \partial_\nu \left[\s D^2_{\eps_1\eps_2}\big|_{\eps_1=\eps_2=0} (u_{1}-u_{2} )\right]\d S\\
&\qquad =\int_{\Omega \times [0, T]} v_0 \left[ \square (\s D^2_{\eps_1\eps_2}\big|_{\eps_1=\eps_2=0} (u_{1}- u_{2}) ) \right]\d x\s \d t \\
&\qquad\qquad +\int_{\Omega \times [0, T]} (\square\, v_0) \s D^2_{\eps_1\eps_2}\big|_{\eps_1=\eps_2=0} (u_{1}-u_{2}) \d x\s\d t  \\
&\qquad  = -2 \int_{\Omega \times [0, T]} v_0 (a_1 -a_2)v_1\s v_2 \s \d x\s \d t   + \frac{1}{\epsilon_1\epsilon_2} \int_{\Omega \times [0, T]} v_0\, \square\s(\tildeR_1-\tildeR_2)\s \d x\d t.
\end{aligned}
\end{equation}
As an immediate consequence of this integral identity we obtain

\begin{equation}\label{eq:estimate_for_optimization}
\begin{split}
&2 \left| \langle v_0(a_1-a_2), v_1\s v_2 \rangle_{L^2(\Omega\times [0, T])} \right|\\
&\,  \leq  \left| \langle v_0, D^2_{\eps_1, \eps_2} (\Lambda_1-\Lambda_2) \left( \eps_1 f_1+ \eps_2 f_2 \right) \rangle_{L^2(\Sigma)} \right| +  \,\eps_1^{-1}\, \eps_2^{-1} \,\left| \langle v_0, \square\s(\tildeR_1-\tildeR_2)\rangle_{L^2(\Omega\times [0, T])} \right|\\
 &\,  \leq  4\, \eps_1^{-1}\, \eps_2^{-1}\left| \langle v_0, (\Lambda_1-\Lambda_2) \left( \eps_1 f_1+ \eps_2 f_2 \right) \rangle_{L^2(\Sigma)} \right| \\
 & \qquad +  \,\eps_1^{-1}\, \eps_2^{-1} \,\left| \langle v_0, \square\s(\tildeR_1-\tildeR_2)\rangle_{L^2(\Omega\times [0, T])} \right|\\
& \, \leq 4\, \delta \,\eps_1^{-1}\, \eps_2^{-1} \,  \norm{v_0}_{\dualH^{-r}(\Sigma)} + \eps_1^{-1}\, \eps_2^{-1}\, \norm{ \square\s(\tildeR_1-\tildeR_2)}_{E^{s+1}}\,\norm{v_0}_{\dualH^{-(s+1)}(\Omega\times [0, T])}\\
& \, \leq \eps_1^{-1}\, \eps_2^{-1} \,(\norm{v_0}_{\dualH^{-r}(\Sigma)}+ \norm{v_0}_{\dualH^{-(s+1)}(\Omega\times [0, T])})\\
&\qquad \quad  \times  \left(4\, \delta + C(s,T) (\norm{a_1}_{E^{s+1}}^2+\norm{a_2}_{E^{s+1}}^2) (\varepsilon_1 \Vert  f_1\Vert_{H^{s+1}(\Sigma)}+\varepsilon_2 \Vert  f_2\Vert_{H^{s+1}(\Sigma)})^3\right) \\
& \, \leq C \,\eps_1^{-1}\, \eps_2^{-1}  \left( \delta  + (\varepsilon_1 \Vert  f_1\Vert_{H^{s+1}(\Sigma)}+\varepsilon_2 \Vert  f_2\Vert_{H^{s+1}(\Sigma)})^3 \right),
\end{split}
\end{equation}
where we used the assumption $\norm{\Lambda_1(f)-\Lambda_2(f)}_{H^r(\widetilde{\Sigma})}\leq \delta$ and denoted
\[
C=  \max  \left\{ 4, C(s,T) (\norm{a_1}_{E^{s+1}}^2+\norm{a_2}_{E^{s+1}}^2)  \right\}  \,(\norm{v_0}_{H^{-r}(\Sigma)}+ \norm{v_0}_{H^{-(s+1)}(\Omega\times [0, T])}).
\]
A similar (slightly simpler) estimate can be achieved by using the assumption $$\abs{\langle v_0, \Lambda_1(f)-\Lambda_2 (f)\rangle_{L^2(\widetilde{\Sigma})}}\leq \delta.$$
Above we have used~\eqref{eq:estim_for_Rl} and that $E^{s+1}\subset H^{s+1}(\Omega\times [0,T])$ to bound the term $\square\s(\tildeR_1-\tildeR_2)$ in $E^{s+1}$. We have respectively denoted by $\dualH^{-r}(\Sigma)$ and $\dualH^{-(s+1)}(\Omega \times [0,T])$ the dual spaces of $H^{r}(\Sigma)$ and $H^{s+1}(\Omega \times [0,T])$ endowed with the following norms, see e.g. \cite{AF03}
\begin{align*}
\norm{w}_{\dualH^{-r}(\Sigma)}&:= \underset{v\in H^{r}(\Sigma),\, \norm{v}_{H^r(\Sigma)}\,\leq \,1}{\sup}\, \, |\langle v, w\rangle_{L^2(\Sigma)}|,\\
\norm{w}_{\dualH^{-(s+1)}(\Omega\times [0,T])}&:= \underset{v\in H^{s+1}(\Omega\times [0,T]),\, \norm{v}_{H^{s+1}(\Omega\times [0,T])}\, \leq \,1}{\sup}\,\, |\langle v, w\rangle_{L^2(\Omega\times[0,T])}|.
\end{align*}

\noindent\textbf{Step 2.} The second step is to suitably choose the functions $f_1$ and $f_2$ so that they allow us to obtain information about $a_1-a_2$ from the integral estimate \eqref{eq:estimate_for_optimization}. In this step, we shall need the following two technical results. 
\begin{lemma}\label{est:H_j}
Let $\alpha>0$, $\gamma\geq 0$ and $\tau\geq 1$. Let $\chi_\alpha\in C_c^\infty(\R)$ be a cut-off function supported on $[-\alpha,\alpha]$ , $\abs{\chi_\alpha}\leq 1$. Consider the function $H\in C_c^{\infty}(\mathbb{R})$ defined by
\[
H(l)=\chi_\alpha(l)\tau^{1/2} \e^{-\frac{1}{2}\tau\, l^2}, \quad l\in \R.
\]
Let $(x_0, t_0)\in \mathbb{R}^2$ and define 
\begin{equation}
\begin{split}
H_1^{\tau,(x_0,t_0)}(x,t)&:=H\big((x-x_0)-(t-t_0)\big), \\
H_2^{\tau,(x_0,t_0)}(x,t)&:= H\big((x-x_0)+(t-t_0)\big). 
\end{split}
\end{equation}
The following estimate holds
\[
\norm{H_1^{\tau,(x_0,t_0)}}_{H^{\gamma}(\Sigma)}+ \norm{H_2^{\tau,(x_0,t_0)}}_{H^{\gamma}(\Sigma)}\leq C \,\tau^{\frac{\gamma +1}{2}}.
\]
The constant $C$ is independent of $(x_0,t_0)\in \R^2$.
\end{lemma}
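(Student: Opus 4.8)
The plan is to reduce the estimate on $\Sigma=\p\Omega\times[0,T]$ to a one–variable estimate on $\R$ and then to exploit the Gaussian scaling structure of $H$. Since we work in $1+1$ dimensions, $\p\Omega$ consists of finitely many points, so $H^\gamma(\Sigma)$ is a finite sum of $H^\gamma([0,T])$–norms of the time–slices. For a fixed boundary point $p\in\p\Omega$ one has
\[
H_1^{\tau,(x_0,t_0)}(p,t)=H\big(-(t-t_p)\big),\qquad t_p:=t_0+p-x_0,
\]
and similarly $H_2^{\tau,(x_0,t_0)}(p,t)=H(t-t_p')$ with $t_p'=t_0-p+x_0$. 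Thus each time–slice is a translate (and, for $H_1$, a reflection) of the single profile $H$, restricted to $[0,T]$. First I would invoke the translation and reflection invariance of the $H^\gamma(\R)$–norm together with the boundedness of the restriction operator $H^\gamma(\R)\to H^\gamma([0,T])$ (whose norm depends only on $\gamma$ and $T$) to get
\[
\norm{H_1^{\tau,(x_0,t_0)}}_{H^\gamma(\Sigma)}+\norm{H_2^{\tau,(x_0,t_0)}}_{H^\gamma(\Sigma)}\leq C_{\gamma,T,\Omega}\,\norm{H}_{H^\gamma(\R)}.
\]
This step already produces a bound independent of $(x_0,t_0)$, as required, and reduces the claim to the purely one–dimensional estimate $\norm{H}_{H^\gamma(\R)}\leq C\,\tau^{(\gamma+1)/2}$.

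For the one–dimensional estimate I would use the rescaling $H(l)=\tau^{1/2}\phi(\tau^{1/2}l)$, where
\[
\phi(y):=\chi_\alpha(\tau^{-1/2}y)\,\e^{-y^2/2}.
\]
On the Fourier side, writing $\lambda:=\tau^{1/2}\geq 1$, the dilation $g:=\phi(\lambda\,\ccdot)$ satisfies $\norm{g}_{H^\gamma(\R)}^2=\lambda^{-1}\int_\R(1+\lambda^2\eta^2)^\gamma\,|\widehat{\phi}(\eta)|^2\,\d\eta$, and the elementary inequality $1+\lambda^2\eta^2\leq\lambda^2(1+\eta^2)$, valid for $\lambda\geq1$, gives $\norm{g}_{H^\gamma(\R)}\leq \lambda^{\gamma-1/2}\norm{\phi}_{H^\gamma(\R)}$. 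Multiplying by the prefactor $\tau^{1/2}=\lambda$ yields
\[
\norm{H}_{H^\gamma(\R)}\leq \tau^{\gamma/2+1/4}\,\norm{\phi}_{H^\gamma(\R)}.
\]
The advantage of this route is that it handles the fractional index $\gamma$ and the cutoff simultaneously, with no explicit derivative count.

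It then remains to bound $\norm{\phi}_{H^\gamma(\R)}$ uniformly in $\tau\geq1$. The key observation is that differentiating the cutoff factor $\chi_\alpha(\tau^{-1/2}y)$ brings down powers of $\tau^{-1/2}\leq1$, so each derivative of $\phi$ is dominated, uniformly in $\tau$, by a fixed polynomial times $\e^{-y^2/2}$; hence $\norm{\phi}_{H^k(\R)}$ is bounded independently of $\tau$ for every integer $k$, and by the embedding $\norm{\phi}_{H^\gamma}\leq\norm{\phi}_{H^{\lceil\gamma\rceil}}$ the same holds for all real $\gamma\geq0$. Combining the three steps and using $\tau\geq1$ to pass from the sharp exponent to the stated one,
\[
\norm{H}_{H^\gamma(\R)}\leq C\,\tau^{\gamma/2+1/4}\leq C\,\tau^{(\gamma+1)/2},
\]
completes the argument. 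I expect the only genuinely delicate point to be the uniformity in $\tau$ of $\norm{\phi}_{H^\gamma}$ in the presence of a $\tau$–dependent cutoff together with a fractional index; the dilation identity on the Fourier side is precisely what lets me sidestep both difficulties at once.
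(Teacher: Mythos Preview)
Your proof is correct but follows a genuinely different route from the paper. The paper estimates $\norm{F}_{H^{\gamma}(\Omega\times[0,T])}$ by computing integer-order derivatives $\p_x^{\beta_1}\p_t^{\beta_2}F$ explicitly, bounding the resulting Gaussian integrals, interpolating to non-integer $\gamma$, and then invoking the trace theorem $H^{\gamma+1/2}(\Omega\times[0,T])\to H^\gamma(\Sigma)$ to pass to the lateral boundary. You instead exploit the one-dimensional geometry directly: since $\p\Omega$ is a finite set of points, $H^\gamma(\Sigma)$ reduces to a sum of one-variable norms, and translation/reflection invariance collapses everything to $\norm{H}_{H^\gamma(\R)}$; then the Fourier dilation identity handles the fractional index and the cutoff in one stroke. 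Your approach is shorter, avoids the trace theorem, and in fact yields the sharper exponent $\tau^{\gamma/2+1/4}$ before you relax it to the stated $\tau^{(\gamma+1)/2}$. The price is that your argument is specific to $n=1$, whereas the paper's derivative-plus-trace scheme carries over verbatim to the higher-dimensional analogue (Lemma~\ref{estimate:H_jn}).
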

\begin{proof}
 Let $(x_0,t_0)\in \R^{2}$ and  $\beta_1,\beta_2\in \N$. Let us write 
 \[
F(x,t)= H\big((x-x_0)-(t-t_0)\big).  
 \]
We have for all $\tau\geq 1$ that 
\begin{equation}
 \begin{split}
  &\norm{\p_{x}^{\beta_1} \p_t^{\beta_2} F}_{L^2(\Omega\times [0,T])}^2=\tau  \int_{\Omega}\int_0^T\left[\p_{x}^{\beta_1} \p_t^{\beta_2}\Big(\chi_\alpha(x-x_0+ t- t_0) \e^{-\frac{\tau}{2} (x-x_0+ t- t_0)^2}\Big)\right]^2 \d t  \d x\\
&\leq C\s \tau \tau^{2(\beta_1+\beta_2)}\int_{\Omega}\int_0^T|\chi_\alpha(x-x_0+ t- t_0)|^2\, (x-x_0+ t- t_0)^{2(\beta_1+\beta_2)}\e^{-\tau (x-x_0+ t- t_0)^2} \d t \, \d x\\
&=C\s \tau\tau^{2(\beta_1+\beta_2)} \int_{\Omega} \int_{x-x_0-t_0}^{x-x_0+T-t_0} |\chi_\alpha(h)|^2\, h^{2(\beta_1+\beta_2)}\e^{-\tau h^2} \d h \, \d x \\
&\leq C\s \tau\tau^{2(\beta_1+\beta_2)} \int_{\Omega} \int_{-\infty}^{\infty} |\chi_\alpha(h)|^2\, h^{2(\beta_1+\beta_2)}\e^{-\tau h^2} \d h \, \d x \\
&\leq C \tau\tau^{2(\beta_1+\beta_2)}\tau^{-(\beta_1+\beta_2)-1/2}\int_\Omega \d x=C_\Omega\tau^{(\beta_1+\beta_2)+1/2}.
 \end{split}
\end{equation}
%
Here in the second line we used the fact that the largest power of $\tau$ in the calculation happens when all the derivatives hit the exponential and none the cut-off function. Therefore, when $\tau\geq 1$, we may absorb the other terms implicit in the calculation to the constant $C$.
We also made a change of variables
\[
 h=x-x_0+ t- t_0
\]
in the integral in the variable $t$, while considering $x$ is fixed. We also used 
\[
\int_\R h^{2(\beta_1+\beta_2)}e^{-\tau h^2}\d h\sim \tau^{-(\beta_1+\beta_2)-1/2}.
\]
Thus, we have
\[
 \norm{F}_{H^{\beta_1+\beta_2}(\Omega\times [0,T])}^2\leq C_\Omega \tau^{(\beta_1+\beta_2)+1/2}.
\]

By a standard interpolation argument between Sobolev spaces, see for instance \cite[Theorem 6.2.4/6.4.5]{BL76}, we then obtain for all $\gamma\geq 0$ that
\begin{equation}
\begin{aligned}
\norm{F}_{H^\gamma(\Omega\times [0,T])}^2& \leq C \tau^{\gamma+1/2}.
\end{aligned}
\end{equation}
Finally, by using the trace theorem we have
\[
\norm{F}_{H^\gamma(\Sigma)}^2\leq C \norm{F}_{H^{\gamma+1/2}(\Omega\times [0,T])}^2\leq  
C\tau^{\gamma + 1}. 
\]
Similar argument yields the same estimate for $H_2^{\tau, (t_0, x_0)}$. 
This completes the proof.
\end{proof}

\begin{lemma}\label{est:tau}
Let $b\in C^{1}_c(\mathbb{R}^2)$ and $\tau>0$. The following estimate  
\[
\left| b(x_0,t_0)-  \frac{\tau}{\pi} \int_{\mathbb{R}^2}b(x,t)\e^{-\tau((x-x_0)^2 + (t-t_0)^2)} \d x\, \d t \right| \leq \frac{\sqrt{\pi}}{2} \left\|b \right\|_{C^1}\tau^{-1/2}
\]
holds true for all $(x_0, t_0)\in \mathbb{R}^2$. In particular, the integral on the left converges uniformly to $b$ when $\tau \to \infty$.
\end{lemma}
\begin{proof}
Without loss of generality we prove the estimate when $(x_0, t_0)=(0,0)$, because it can be later applied to $b(x+x_0, t+t_0)$ in place of $b(x,t)$. Using polar coordinates, one can see that $\int_{\mathbb{R}^2}\e^{-(x^2+t^2)} \d x \, \d t = \pi$ and $\int_{\mathbb{R}^2}2\, \sqrt{x^2+t^2}\,\e^{-(x^2+t^2)} \d x \, \d t=\pi^{3/2}$, and noting that $\left|  b(0,0)- b(\tau^{-1/2}x,\tau^{-1/2}t)\right| \leq \left\|b \right\|_{C^1}\tau^{-1/2}\left| (x,t)\right|$ for all $(x,t)\in \mathbb{R}^2$, we immediately deduce
\begin{align*}
&\left| b(0,0)-   \frac{\tau}{\pi} \int_{\mathbb{R}^2}b(x,t)\e^{-\tau(x^2 + t^2)} \d x\, \d t \right|\\
&\qquad = \left| b(0,0)-   \frac{1}{\pi} \int_{\mathbb{R}^2}b(\tau^{-1/2}x,\tau^{-1/2}t)\e^{-(x^2 + t^2)} \d x\, \d t \right| \\
&\qquad  = \left|  \frac{1}{\pi} \int_{\mathbb{R}^2}\left(b(0,0)-b(\tau^{-1/2}x,\tau^{-1/2}t)\right)\e^{-(x^2 + t^2)} \d x\, \d t \right|\\
&\qquad  \leq \frac{\tau^{-1/2}}{\pi}  \left\|b \right\|_{C^1} \int_{\R^2} |(x,t)|e^{-(x^2+t^2)} \d x\, \d t    = \frac{\sqrt{\pi}}{2} \left\|b \right\|_{C^1}\tau^{-1/2}. 
\qedhere
\end{align*}
\end{proof}

 Let $(x_0, t_0)\in \supp(a_j)$. Note that  
 \[
  \square H_j=0.
 \]
We choose 
\begin{equation}\label{choice_of_vj}
 v_j=H_j \text{ and } f_j=H_j|_{\Sigma}, \quad j=1,2,
\end{equation}
where $H_j=H_j^{\tau,(x_0,t_0)}$ is as in Lemma \ref{est:H_j} with $\gamma=s+1$ and the cut-off function $\chi_\alpha$  so that $\chi_\alpha(0)=1$. 
We assume that $\alpha>0$ is small enough that $f_j$ vanishes near $\{t=0\}$, so that $\p_t^k\big|_{t=0} f_j=0$, $k=1,\ldots,s$, on $\p\Omega$.
Substituting this choice of $v_j$ into inequality \eqref{eq:estimate_for_optimization}, and using Lemma \ref{est:tau} with 
 \[
  b(x,t):=v_0\s(a_1-a_2)\chi_\alpha(x-x_0- (t-t_0))\chi_\alpha(x-x_0+ (t-t_0)),
 \]
we get
 \begin{equation}\label{est:optimising}
 \begin{split}
&\left| (v_0(a_1-a_2))(x_0, t_0)\right|  \leq   \frac{1}{\pi}  \left| \int_{\Omega\times [0,T]} v_0 (a_1 -a_2)H_1\, H_2 \,\d x\,\d t \right|\\ 
&    \qquad  + \left| (v_0(a_1-a_2))(x_0, t_0) -\frac{1}{\pi} \int_{\Omega\times [0,T]} v_0 (a_1 -a_2)H_1\, H_2 \,\d x\,\d t \right| \\
& \quad\quad     \qquad  \leq  C_{\Omega, T, a_j, \chi_\alpha}  \left(2\tau^{-1/2} + \frac{\delta}{2} \eps_1^{-1}\eps_2^{-1}+\eps_1^{-1}\eps_2^{-1}\, (\eps_1+ \eps_2)^3  \,\tau^{\frac{3}{2}s + 3} \right) \left\| v_0 \right\|_{C^1}\\
& \quad\quad    \qquad  \leq  \frac{C_{\Omega, T, a_j, \chi_\alpha}M}{\kappa^3}  \left(2\tau^{-1/2} + \frac{\kappa^3\delta}{2M} \eps_1^{-1}\eps_2^{-1}+\epsilon_1^{-1}\eps_2^{-1}\, (\eps_1+ \eps_2)^3 \,\tau^{\frac{3}{2}s + 3} \right) \left\| v_0 \right\|_{C^1}.
\end{split}
\end{equation}

In the last step we scaled $\delta$ by a constant $\kappa^3/M$, which we without loss of generality assume is $<1$.
This scaling is purely technical and will be clarified in Lemma~\ref{lemma:final_estimate}.

\noindent\textbf{Step 3.}
 Our last step is optimizing $\tau$, $\eps_1$  and $\eps_2$ in terms of $\delta$ to get the right hand side of~\eqref{est:optimising} as small as possible. The constants $2$ and $1/2$ in front of $\tau^{-1/2}$ and $\delta\epsilon^{-2}$ are used only to simplify the formulas. We begin by setting 
 \[
 \epsilon_1=\epsilon_2=\epsilon.
 \]
Note that we have
\begin{equation}\label{eq:epsHj}
 \eps\norm{f_j}_{H^{s+1}(\Sigma)}\sim \eps\tau^{\frac{s+2}{2}}.
\end{equation}
To guarantee the unique solvability of the non-linear wave equation, we require the quantity on the right-hand side of \eqref{eq:epsHj} is bounded by $\kappa$ as in Lemma \ref{lemma:nonlinear-solutions}.
The following Lemma~\ref{lemma:final_estimate} shows how to optimally choose the parameters $\lambda$ and $\eps$ of the inverse problem given a priori bounds $\kappa$ and $\delta$ of the forward problem, while keeping the size of the sources $\eps_jf_j$ small.

\begin{lemma}\label{lemma:final_estimate}
For any given $\delta\in(0,M)$ and $\kappa\in(0,1)$ small enough we find $\eps(\delta,\kappa)=\eps$ and $\tau(\delta,\kappa)=\tau\geq 1$  such that
\begin{equation}\label{eq:final_estimate}
f(\epsilon, \tau):= 2\tau^{-1/2} + \frac{\kappa^3\delta}{2M} \epsilon^{-2}+\epsilon \,\tau^{\frac{3}{2}s + 3}  \leq C_{s,M,\kappa} \, \delta^{\frac{1}{  6s+15}}
\end{equation}
and we also have
$$
\eps\tau^{\frac{s+2}{2} }\leq \kappa.
$$
The constant $C_{s,M,\kappa}$ is independent of $\delta$.
\end{lemma}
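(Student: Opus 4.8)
The plan is to read off the claim as a two-parameter minimization: set $p:=\tfrac32 s+3=\tfrac{3(s+2)}{2}$ and view $f(\eps,\tau)=2\tau^{-1/2}+\frac{\kappa^3\delta}{2M}\eps^{-2}+\eps\,\tau^{p}$ as a sum of three monomials, one decreasing in $\tau$, one decreasing in $\eps$, and one increasing in both. I would minimize by the standard balancing trick, first optimizing in $\eps$ for fixed $\tau$ and then in $\tau$. Since only the last two terms involve $\eps$, differentiating $\eps\mapsto \frac{\kappa^3\delta}{2M}\eps^{-2}+\eps\tau^{p}$ and setting the derivative to zero gives the explicit minimizer
\[
 \eps=\eps(\tau):=\kappa M^{-1/3}\,\delta^{1/3}\,\tau^{-p/3},
\]
at which a direct computation shows the two $\eps$-terms are comparable and
\[
 f(\eps(\tau),\tau)=2\tau^{-1/2}+\tfrac32\,\kappa M^{-1/3}\,\delta^{1/3}\,\tau^{2p/3}.
\]

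The remaining two terms are respectively decreasing and increasing in $\tau$, so I balance them by setting $\tau^{-1/2}\sim\delta^{1/3}\tau^{2p/3}$, which forces
\[
 \tau:=\delta^{-2/(3+4p)}=\delta^{-2/(6s+15)},
\]
using $3+4p=6s+15$. The crucial bookkeeping is that this choice makes both surviving terms equal to $\delta^{1/(6s+15)}$: indeed $\tau^{-1/2}=\delta^{1/(6s+15)}$, while the exponent of $\delta$ in $\delta^{1/3}\tau^{2p/3}$ is $\tfrac13-\tfrac{2p}{3}\cdot\tfrac{2}{6s+15}$, which equals $\tfrac1{6s+15}$ (equivalently $\sigma(2s+5)=\tfrac13$ with $\sigma=\tfrac1{6s+15}$). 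Hence
\[
 f(\eps,\tau)\le\big(2+\tfrac32\kappa M^{-1/3}\big)\,\delta^{1/(6s+15)}=:C_{s,M,\kappa}\,\delta^{1/(6s+15)},
\]
which is the asserted estimate \eqref{eq:final_estimate} with a constant independent of $\delta$.

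Finally I would verify the side conditions. Because $p/3=(s+2)/2$, the exponent of $\tau$ in $\eps\,\tau^{(s+2)/2}=\kappa M^{-1/3}\delta^{1/3}\,\tau^{-p/3+(s+2)/2}$ vanishes, so $\eps\,\tau^{(s+2)/2}=\kappa M^{-1/3}\delta^{1/3}\le\kappa$ holds \emph{automatically}, precisely because $\delta<M$; this is the smallness bound needed in Lemma~\ref{lemma:nonlinear-solutions}. The requirement $\tau\ge1$ amounts to $\delta\le1$, which is guaranteed by taking $\delta$ small enough, the complementary regime (with $\delta$ bounded away from $0$) being trivial since then $\delta^{1/(6s+15)}$ is bounded below and the estimate holds after enlarging the constant. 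The computation itself is elementary calculus and exponent arithmetic; the only genuine content — and the point I would check most carefully — is that the optimal pair $(\eps,\tau)$ is compatible with the solvability constraint $\eps\tau^{(s+2)/2}\le\kappa$, which works out exactly because the exponent matching forces $\eps\tau^{(s+2)/2}$ to reduce to $\kappa M^{-1/3}\delta^{1/3}$ with no residual power of $\tau$.
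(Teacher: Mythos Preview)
Your proof is correct and takes essentially the same approach as the paper: both select $(\eps,\tau)$ by balancing the three monomials (the paper via the simultaneous critical-point equations $\partial_\eps f=\partial_\tau f=0$, you via sequential optimization in $\eps$ then $\tau$, which gives the same scaling in $\delta$), and both verify the solvability constraint through the same key identity $\eps\tau^{(s+2)/2}=\kappa(\delta/M)^{1/3}\le\kappa$. The only minor difference is in securing $\tau\ge1$: the paper's formula for $\tau$ carries the factor $(\kappa^3/M)^{-2/(6s+15)}$, so smallness of $\kappa$ alone forces $\tau\ge1$ uniformly for all $\delta\in(0,M)$, whereas your choice $\tau=\delta^{-2/(6s+15)}$ needs the brief case-split on $\delta$ that you sketch.
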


\begin{proof}
To simplify notation, let $\widehat{s}:=3s/2+3$ and $\gamma_0=\kappa^3/M$. A direct computation shows that
\begin{equation}
\partial_{\epsilon}f= -(\gamma_0\delta) \epsilon^{-3}+ \tau^{\hat{s}}, \quad \partial_\tau f=- \tau^{-3/2}+ \epsilon \hat{s}\tau^{\hat{s}-1}.
\end{equation}
Making $\partial_{\epsilon}f=\partial_{\tau}f=0$, we obtain the critical points of $f$, namely
\begin{equation}\label{eq:critical1d}
\tau= {\widehat{s}}^{\s\s\,  -\frac{6}{4\widehat{s}+3}} (\gamma_0\delta)^{-\frac{2}{4\widehat{s}+3}}, \quad \epsilon={\widehat{s}}^{\s\s\frac{2\widehat{s}}{4\widehat{s}+3}} (\gamma_0\delta)^{\frac{2\widehat{s}+1}{4\widehat{s}+3}}.
\end{equation}
With these choices of $\tau$ and $\epsilon$, one can check that $\tau^{-1/2}$, $(\gamma_0\delta) \epsilon^{-2}$ and $\epsilon \tau^{\widehat{s}}$ are all bounded by $C_s\, (\gamma_0\delta)^{\frac{1}{4\widehat{s}+3}}$. Also, $\tau\geq 1$ for $\kappa$ small enough.

Furthermore, since
$$
\eps\tau^{\frac{\widehat{s}}{3}} = (\gamma_0\delta)^{1/3},
$$
we have that
$$
(\eps\tau^{\frac{s+2}{2}})
 \leq \kappa
$$
for any $0<\delta<M$.
This finishes the proof. 
\end{proof}

Equation \eqref{eq:critical1d} in the proof of Lemma \ref{lemma:final_estimate} also shows how to choose the parameters $\tau$ and $\eps$ depending on $\delta$ and $\kappa$. We also see that $\eps\norm{f_j}_{H^{s+1}(\Sigma)}\leq \kappa$.

Continuing from \eqref{est:optimising} by putting $\eps_1=\eps_2=\eps$ and then applying Lemma \ref{lemma:final_estimate} we finally obtain
\begin{align}\label{optimized}
&\left| (v_0(a_1-a_2))(x_0, t_0)\right|  \\
&\qquad\leq 
\frac{C_{\Omega, T, a_j, \chi_\alpha}M}{\kappa^3}  \left(2\tau^{-1/2} + \frac{\kappa^3\delta}{2M} \eps_1^{-1}\eps_2^{-1}+\eps_1^{-1}\eps_2^{-1}\, (\eps_1+ \eps_2)^3 \,\tau^{\frac{3}{2}s + 3} \right) \left\| v_0 \right\|_{C^1}\\
&\qquad=
 \frac{C_{\Omega, T, a_j, \chi_\alpha}M}{\kappa^3}  \left(2\tau^{-1/2} + \frac{\kappa^3\delta}{2M} \eps^{-2}+\eps \,\tau^{\frac{3}{2}s + 3} \right) \left\| v_0 \right\|_{C^1}
\leq 
C
\delta^{\frac{1}{6s+15}}. 
\end{align}
Recall that $v_0$ satisfies \eqref{eq:defv_0}.
%
In particular $v_0(x_0,t_0)=1$.
This finishes the proof of Theorem \ref{thm:stability}. 
Moreover, by letting $\delta\to 0$ we obtain Theorem~\ref{thm:uniqueness}.
\qed


\subsubsection{Proof of Theorem \ref{thm:noise} in $1+1$ dimensions with $m=2$}
We prove Theorem \ref{thm:noise} in the case $1+1$ dimensions with $m=2$. The proof follows from similar arguments we used in the previous section. 
Let us consider any point $(x_0,t_0)\in\supp(a)$ and let $v_0$, $H_1$ and $H_2$ be as in~\eqref{eq:defv_0} and Lemma~\ref{est:H_j} respectively. Let us also set $v_j=H_j$, $f_j=H_j|_{\Sigma}$, $j=1,2$, as before. Then, as in the proof of Theorem \ref{thm:stability} in $1+1$ dimensions with $m=2$, we have 
\begin{align*}
&\left|
-2v_0a(x_0,t_0) - \frac{1}{\pi}D_{\eps_1,\eps_2}^2\big|_{\eps_1=\eps_2=0} \int_\Sigma v_0
(\Lambda+\E)(\eps_1f_1 + \eps_2f_2)\d S
\right|\\
&\quad\leq
\left|
-2v_0a(x_0,t_0) + \frac{2}{\pi}\int_{\Omega\times[0,T]}v_0 a\s v_1 v_2\d x\,\d t\right|\\
&\qquad+
\left|
\int_{\Omega\times[0,T]} v_0\frac{1}{\eps_1\eps_2}\square\tildeR\d x\,\d t
\right|
+
\left|
D_{\eps_1,\eps_2}^2\big|_{\eps_1=\eps_2=0} \int_\Sigma v_0\s \E(\eps_1f_1 + \eps_2f_2)\d S
\right|\\
&=:I_1 + I_2 + I_3.
\end{align*}
%
By using Lemma \ref{est:tau} on the first term $I_1$ we obtain
\[
I_1 \leq C_{\Omega,T,a} \tau^{-1/2}.
\]
The third term is estimated simply by
\[
I_3 \leq C_{\Omega,T,a} \frac{\delta}{\eps_1\eps_2} \Vert v_0\Vert_{\dualH^{-r}(\Sigma)}.
\]
The remaining term $I_2$ can be estimated by using \eqref{est:square_R} as
\begin{align}
I_2 &\leq \frac{C_{\Omega,T,a}}{\eps_1\eps_2}\Vert \square\tildeR\Vert_{E^{s+1}}
\Vert v_0\s\Vert_{\dualH^{-s-1}(\Sigma)}\\
&\leq
\frac{C_{\Omega,T,a}}{\eps_1\eps_2}\left(
\eps_1\Vert H_1\Vert_{H^{s+1}(\Sigma)} + \eps_2\Vert H_2\Vert_{H^{s+1}(\Sigma)}
\right)^3 \Vert v_0\Vert_{\dualH^{-(s+1)}(\Sigma)}\\
&\leq
\frac{C_{\Omega,T,a}}{\eps_1\eps_2}(\eps_1+\eps_2)^3\tau^{3s/2+3} \Vert v_0\Vert_{\dualH^{-(s+1)}(\Sigma)}.
\end{align}
Combining everything and changing the constant if necessary, we have that
\begin{equation}
I_1+I_2+I_3\leq C_{\Omega,T,a} \left(2\tau^{-1/2} + \frac{\kappa^3\delta}{2M} \epsilon_1^{-1}\epsilon_2^{-1}+\epsilon_1^{-1}\epsilon_2^{-1}\, (\eps_1+ \eps_2)^3 \,\tau^{\frac{3}{2}s + 3} \right),
\end{equation}
which is the same estimate as in equation \eqref{est:optimising}.
Choosing now $\eps_1=\eps_2=\eps$ and optimizing by using Lemma \ref{lemma:final_estimate} we have the claimed estimate.
This completes the proof of Theorem \ref{thm:noise}.\qed

\section{Proofs of the main results in dimensions $n+1$, $n\geq 2$, and $m\geq 2$}\label{sec:main-results-nD}
Here we finish the proof of Theorem~\ref{thm:stability}. To do that, we need to first discuss Radon transformation.

\subsection{Radon transform} \label{RT}
Let $f$ be a function on $\mathbb{R}^n$, which is integrable on each hyperplane in $\mathbb{R}^n$. Each hyperplane can be expressed as the set of solutions $x$ to the equation $x\cdot \theta =\eta$, where $\theta 
\in S^{n-1}$ is the unit normal of the hyperplane, and $\eta\in \mathbb{R}$. The Radon transform of $f$ is defined by
\begin{equation}\label{def:Radon_transform}
(\pmb{R}f)(\theta, \eta)= \int_{x\cdot \theta =\eta} f(x)\s \d x=\underset{y\in\s \theta^{\perp}}{\int} f(\eta\s\theta +y) \s \d y.
\end{equation}
Here $\theta^{\perp}$ denotes the set of orthogonal vectors to $\theta$. We remark that if a function is supported in a ball of radius $M$ in $\R^n$, then its Radon transformation is supported in its $\eta$ variable in $[-M,M]$. 

There is a natural relation between $f$ and its Radon transform on the Fourier side. This is usually called the \textit{Fourier slice theorem}, see for instance \cite[Theorem 1.1]{Na}.
\begin{proposition*}[Fourier slice theorem]\label{th:Fourier_slice}
Let $\theta\in S^{n-1}$. For $f\in C_c^\infty(\R^n)$ we have
\[
\F_{\eta\to\sigma}\big((\pmb{R} f)(\theta, \eta)\big)(\sigma)= (2\pi)^{\frac{n-1}{2}}\widehat{f}(\sigma\theta), \quad \sigma\in \R.
\]
Here $\F_{\eta\to\sigma}$ denotes the one dimensional Fourier transform with respect to $\eta$ and the hat-notation $\widehat{f}$ is used to denote the $n$-dimensional Fourier transform.
More precisely, 
\[
\F_{\eta\to\sigma}\big((\pmb{R} f)(\theta, \eta)\big)(\sigma)= \int_\R e^{-i\eta\sigma} (\pmb{R}f)(\theta, \eta)\s \d \eta, \quad \widehat{f}(\xi)= \int_{\R^n} e^{-i x\cdot \xi} f(x) \s \d x.
\]
\end{proposition*}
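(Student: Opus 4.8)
The plan is a direct computation that reduces the identity to Fubini's theorem together with the orthogonal splitting $\R^n = \R\s\theta \oplus \theta^\perp$; this is the standard proof of the Fourier slice theorem, so I expect no deep obstacle, only bookkeeping. First I would substitute the second (integral over $\theta^\perp$) expression for the Radon transform from~\eqref{def:Radon_transform} into the one-dimensional Fourier transform, writing
\[
\F_{\eta\to\sigma}\bigl((\pmb{R}f)(\theta,\eta)\bigr)(\sigma) = c_1\int_\R e^{-i\eta\sigma}\Bigl(\,\int_{\theta^\perp} f(\eta\s\theta + y)\s\d y\Bigr)\d\eta,
\]
where $c_1$ is whatever normalization constant accompanies the one-dimensional Fourier transform. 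Since $f\in C_c^\infty(\R^n)$, the Radon transform $\pmb{R}f$ is smooth and compactly supported in the $\eta$ variable, so the double integral is absolutely convergent and Fubini legitimately turns the iterated integral into a single integral over $\R\times\theta^\perp$.

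Next I would perform the change of variables $x = \eta\s\theta + y$ with $\eta\in\R$ and $y\in\theta^\perp$. Completing $\theta$ to an orthonormal basis of $\R^n$ by an orthonormal basis of $\theta^\perp$ shows that this map is an orthogonal change of coordinates, hence measure preserving, so $\d\eta\,\d y = \d x$; crucially, it also gives $\eta = x\cdot\theta$. The phase therefore rewrites as $e^{-i\eta\sigma}=e^{-ix\cdot(\sigma\theta)}$, and the double integral collapses to
\[
c_1\int_{\R^n} e^{-ix\cdot(\sigma\theta)} f(x)\s\d x,
\]
which is exactly the unnormalized $n$-dimensional Fourier integral of $f$ evaluated at the point $\xi=\sigma\theta$ on the ray through $\theta$.

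Finally I would collect the normalization constants. Comparing the displayed integral with the definition of $\widehat f$ shows that the two sides of the asserted identity agree up to the product of the normalization factors of the one- and the $n$-dimensional transforms; with the symmetric convention of~\cite{Na} this product is precisely $(2\pi)^{(n-1)/2}$, which yields the stated formula $\F_{\eta\to\sigma}((\pmb{R}f)(\theta,\eta))(\sigma)=(2\pi)^{(n-1)/2}\widehat f(\sigma\theta)$. The only points requiring any care are the justification of Fubini (immediate from smoothness and compact support) and tracking these powers of $2\pi$. An equivalent and slightly cleaner route, which I might present instead, is to first invoke the rotational covariance of both $\pmb{R}$ and the Fourier transform to reduce to the case $\theta=e_n$: then $(\pmb{R}f)(e_n,\eta)$ is simply the integral of $f$ over the hyperplane $\{x_n=\eta\}$, and the claim becomes the elementary statement that integrating out $x_1,\dots,x_{n-1}$ and Fourier transforming in $x_n$ agrees, up to the normalization constant, with the full Fourier transform restricted to the $\xi_n$-axis.
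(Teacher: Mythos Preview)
The paper does not supply its own proof of this proposition; it simply cites \cite[Theorem~1.1]{Na}. Your argument is the standard one and is correct: substitute the definition of $\pmb{R}f$, apply Fubini (justified by $f\in C_c^\infty$), and use the orthogonal change of variables $x=\eta\theta+y$ to collapse the iterated integral into the $n$-dimensional Fourier integral at $\sigma\theta$.

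One small point worth flagging: the constant $(2\pi)^{(n-1)/2}$ in the displayed identity is consistent with the \emph{symmetric} Fourier convention (a factor $(2\pi)^{-1/2}$ in one dimension and $(2\pi)^{-n/2}$ in $n$ dimensions), and you correctly attribute it to that convention. However, the ``More precisely'' line in the statement writes both transforms \emph{without} any normalizing prefactor, and with those literal definitions your computation gives $\F_{\eta\to\sigma}((\pmb{R}f)(\theta,\eta))(\sigma)=\widehat f(\sigma\theta)$ with constant $1$. This is an internal inconsistency in the statement as written, not a flaw in your reasoning; just be aware that your sentence ``with the symmetric convention of~\cite{Na} this product is precisely $(2\pi)^{(n-1)/2}$'' does not match the explicit integrals displayed immediately below the identity.
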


Using the Fourier slice theorem we can show that the Sobolev $H^{-\beta}$ norm of a function can be estimated by the $L^2$ norm of its Radon transform, if the Sobolev index is $\beta \geq (n-1)/2$. This is a special case of \cite[Theorem 5.1]{Na}. We give a proof of Lemma \ref{RT:pointwise} for the convenience of the reader in Appendix \ref{app:B}.

\begin{lemma}\label{RT:pointwise}
Let $\beta\geq (n-1)/2$. Let $f\in C^\infty_c(\R^n)$ with $\supp f \subset B_M(0)\subset \R^n$ for some $M>0$. Consider $F\in L^2(S^{n-1}\times [-M, M])$ and assume that there exists a constant $C_0>0$  such that
\[
|(\pmb{R}f)(\theta, \eta)|\leq C_0 F(\theta, \eta), \quad  \textit{a.e } (\theta,\eta)\in S^{n-1}\times [-M, M]. 
\] 
Then we have the following estimate
\[
\norm{f}_{H^{-\beta}(\R^n)}\leq (2\pi)^{1/2}\,C_0\, \norm{F}_{L^2(S^{n-1}\times [-M,M])}.
\]
Here $C_0$ is independent of $\theta$ and $\eta$.
\end{lemma}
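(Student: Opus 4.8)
The plan is to compute the $H^{-\beta}$ norm of $f$ entirely on the Fourier side, pass to polar coordinates in the frequency variable, and then invoke the Fourier slice theorem to replace each radial slice $\widehat f(\sigma\theta)$ by the one-dimensional Fourier transform of the Radon transform $(\pmb R f)(\theta,\cdot)$. Since $f\in C_c^\infty(\R^n)$ every object below is smooth and rapidly decaying, so no approximation step is needed.

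First I would write, using polar coordinates $\xi=\sigma\theta$ with $\sigma>0$, $\theta\in S^{n-1}$ and Jacobian $\sigma^{n-1}$,
\[
\norm{f}_{H^{-\beta}(\R^n)}^2=\int_{\R^n}(1+\abs{\xi}^2)^{-\beta}\abs{\widehat f(\xi)}^2\,\d\xi=\int_0^\infty\!\!\int_{S^{n-1}}(1+\sigma^2)^{-\beta}\abs{\widehat f(\sigma\theta)}^2\,\sigma^{n-1}\,\d\theta\,\d\sigma.
\]
By the Fourier slice theorem, $\widehat f(\sigma\theta)=(2\pi)^{-(n-1)/2}\,g_\theta(\sigma)$, where $g_\theta:=\F_{\eta\to\sigma}\bl(\pmb R f)(\theta,\ccdot)\br$, so that $\abs{\widehat f(\sigma\theta)}^2=(2\pi)^{-(n-1)}\abs{g_\theta(\sigma)}^2$.

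The crucial step, and really the only genuine ingredient of the argument, is the absorption of the Jacobian: since $\beta\geq(n-1)/2$ the elementary bound $\sigma^{n-1}=(\sigma^2)^{(n-1)/2}\leq(1+\sigma^2)^{(n-1)/2}\leq(1+\sigma^2)^{\beta}$ gives $(1+\sigma^2)^{-\beta}\sigma^{n-1}\leq 1$. This is exactly where the hypothesis on $\beta$ enters, and I expect it to be the main (indeed the only) point requiring attention. After this bound I would enlarge the radial integral from $(0,\infty)$ to all of $\R$ (the integrand is nonnegative) and apply the one-dimensional Plancherel identity in $\sigma$ for each fixed $\theta$:
\[
\int_\R\abs{g_\theta(\sigma)}^2\,\d\sigma=2\pi\int_\R\abs{(\pmb R f)(\theta,\eta)}^2\,\d\eta=2\pi\int_{-M}^{M}\abs{(\pmb R f)(\theta,\eta)}^2\,\d\eta,
\]
where the last equality uses that $\supp f\subset B_M(0)$ forces $(\pmb R f)(\theta,\ccdot)$ to be supported in $[-M,M]$.

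Collecting the constants then yields
\[
\norm{f}_{H^{-\beta}(\R^n)}^2\leq(2\pi)^{2-n}\int_{S^{n-1}}\!\!\int_{-M}^{M}\abs{(\pmb R f)(\theta,\eta)}^2\,\d\eta\,\d\theta,
\]
and inserting the pointwise hypothesis $\abs{(\pmb R f)(\theta,\eta)}\leq C_0\,F(\theta,\eta)$ bounds the right-hand side by $(2\pi)^{2-n}C_0^2\norm{F}_{L^2(S^{n-1}\times[-M,M])}^2$. Finally, since $(2\pi)^{2-n}\leq 2\pi$ for every $n\geq 1$, this gives the claimed estimate with the (non-optimal but stated) constant $(2\pi)^{1/2}$. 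Apart from keeping the Fourier normalization conventions consistent, the argument is routine bookkeeping once the Jacobian-absorption inequality is in place.
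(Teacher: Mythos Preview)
Your proof is correct and follows essentially the same route as the paper's own proof: compute $\norm{f}_{H^{-\beta}}$ in polar coordinates on the Fourier side, apply the Fourier slice theorem, absorb the Jacobian $\sigma^{n-1}$ into the weight $(1+\sigma^2)^{-\beta}$ via the assumption $\beta\geq (n-1)/2$, and finish with the one-dimensional Plancherel identity together with the support and pointwise hypotheses. Your bookkeeping of constants (arriving at $(2\pi)^{2-n}$ and then bounding it by $2\pi$) is in fact slightly more careful than the paper's.
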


\begin{lemma}\label{estimate:H_jn}
Let $\alpha>0$, $\gamma\geq 0$ and $\tau\geq 1$. Let $\chi_\alpha\in C_c^\infty(\R)$ be a cutoff function supported on $[-\alpha,\alpha]$, $\abs{\chi_\alpha}\leq 1$. Consider $H\in C_c^{\infty}(\mathbb{R})$ defined by
\[
H(l)=\chi_\alpha(l)\tau^{1/2} \e^{-\frac{1}{2}\tau\, l^2}.
\]
In addition, consider $t_0\in \R$, $\eta\in \R$ and $\theta\in S^{n-1}$, and define 
\begin{align*}
H_1^{\tau, (t_0, \theta,\eta)}(x,t)&:=H(x\cdot \theta -t - (\eta-t_0)),\\
H_2^{\tau, (t_0, \theta,\eta)}(x,t)&:=H(- x\cdot \theta -t + (\eta+t_0)).
\end{align*}
The following estimate holds 
\[
\norm{H_1^{\tau, (t_0,\theta, \eta)}}_{H^{\gamma}(\Sigma)}+ \norm{H_2^{\tau, (t_0, \theta,\eta)}}_{H^{\gamma}(\Sigma)}\leq 
C\tau^{\frac{\gamma +1}{2}},
\]
where the implicit constant is independent of $t_0$, $\theta$ and $\eta$. 
\end{lemma}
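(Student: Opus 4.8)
The plan is to repeat the computation from the proof of Lemma~\ref{est:H_j}, now treating the phase
\[
 l := x\cdot\theta - t - (\eta - t_0)
\]
of $H_1^{\tau,(t_0,\theta,\eta)}$ (and the analogous phase of $H_2$) as a single affine coordinate on $\R^n\times[0,T]$, while tracking that every constant is independent of $\theta\in S^{n-1}$, $\eta$ and $t_0$.

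First I would bound the $L^2(\Omega\times[0,T])$ norm of an arbitrary integer-order derivative $\p^\beta H_1^{\tau,(t_0,\theta,\eta)}$, where $\beta$ is a multi-index in $(x,t)$. Since $l$ is affine in $(x,t)$ with $\p_{x_i}l = \theta_i$ and $\p_t l = -1$, the chain rule gives $\p^\beta H_1 = H^{(|\beta|)}(l)$ times a product of the coefficients $\theta_i$ and $-1$; as $\abs{\theta_i}\leq 1$ because $\theta\in S^{n-1}$, this yields the pointwise bound $\abs{\p^\beta H_1}\leq \abs{H^{(|\beta|)}(l)}$ uniformly in $\theta$. Exactly as in Lemma~\ref{est:H_j}, the dominant power of $\tau$ arises when all derivatives fall on the Gaussian, so that
\[
 \abs{H^{(|\beta|)}(l)}^2 \leq C\,\tau\,\tau^{2|\beta|}\,\abs{\chi_\alpha(l)}^2\,l^{2|\beta|}\,\e^{-\tau l^2},
\]
the remaining product-rule terms carrying strictly lower powers of $\tau$ and hence absorbable into $C$ for $\tau\geq 1$.

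The decisive step is the change of variables. For fixed $x$ I would integrate in $t$ first: because $\p_t l = -1$, the substitution $h = l$ has unit Jacobian and maps $t\in[0,T]$ onto an interval of length $T$, so
\[
 \int_0^T \abs{\chi_\alpha(l)}^2\,l^{2|\beta|}\,\e^{-\tau l^2}\,\d t \leq \int_\R h^{2|\beta|}\e^{-\tau h^2}\,\d h \sim \tau^{-|\beta|-1/2}.
\]
Integrating the remaining bounded factor over the bounded set $\Omega$ contributes only $\abs{\Omega}$, whence $\norm{\p^\beta H_1}_{L^2(\Omega\times[0,T])}^2\leq C_\Omega\,\tau^{|\beta|+1/2}$, uniformly in all parameters. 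This is precisely the one-dimensional bound, and the identical argument applies to $H_2$, whose phase $-x\cdot\theta - t + (\eta+t_0)$ also satisfies $\p_t l = -1$.

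From here the proof runs verbatim as in Lemma~\ref{est:H_j}: interpolation between Sobolev spaces upgrades the integer estimate to $\norm{H_1}_{H^\gamma(\Omega\times[0,T])}^2\leq C\tau^{\gamma+1/2}$ for all real $\gamma\geq 0$, and the trace theorem gives $\norm{H_1}_{H^\gamma(\Sigma)}^2\leq C\norm{H_1}_{H^{\gamma+1/2}(\Omega\times[0,T])}^2\leq C\tau^{\gamma+1}$, which is the claimed $\tau^{(\gamma+1)/2}$ after taking square roots. The only feature genuinely absent from the scalar case is the required uniformity in the direction $\theta\in S^{n-1}$; I expect this to be the crux, but it turns out to be inexpensive, since $\theta$ enters the derivative bounds only through the factors $\theta_i$ (each $\leq 1$ in modulus) and drops out entirely from the decisive $t$-integration because $\p_t l = -1$ independently of $\theta$.
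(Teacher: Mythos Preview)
Your proposal is correct and follows essentially the same route as the paper: bound integer-order derivatives of $H_1$ on $\Omega\times[0,T]$ by the dominant Gaussian term, perform the change of variables $h=l$ in the $t$-integral for fixed $x$ (using $\p_t l=-1$), integrate over the bounded $\Omega$, then interpolate and apply the trace theorem. Your explicit remark that $\theta$ enters only through the factors $\theta_i$ with $|\theta_i|\leq 1$, and drops out of the decisive $t$-integration, is precisely the uniformity point the paper's proof leaves implicit.
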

The proof of this lemma is similar to Lemma \ref{est:H_j} and can be found in Appendix \ref{app:B}.

Let $\Omega\subset \R^n$. We write $\mathscr{R}(G)$ for the \emph{partial Radon transformation} of a function $G=G(x,t)\in \Omega\times\R$, in its spatial variable $x$:
\[
 \mathscr{R}(G)(t,\theta,\eta)=\int_{x\cdot \theta =\eta} G(x,t)\d x, \quad \theta\in S^{n-1}, \ \eta\in \R.
\]

\begin{lemma}\label{est:tau_ndimension}
Let $G\in C^{\infty}_c(\mathbb{R}^{n+1})$. Let $t_0\in \R$ and $\tau>0$. There exists $C>0$ (depending only on $\supp G$) such that the following estimate 
\begin{align*}
&\left| \mathscr{R}(G)(t_0,\theta, \eta)-  \frac{\tau}{\pi} \int_{\R}\int_{\mathbb{R}^{n}}G(x,t)\e^{-\tau((x\cdot \theta-\eta)^2 + (t-t_0)^2)} \d x\, \d t \right|\\
& \leq \frac{\sqrt{\pi}}{2} C \left\| G \right\|_{C^1(\R^{n+1})}\tau^{-1/2}
\end{align*}
holds. Here $C$ is independent of $\theta\in S^{n-1}$ and $\eta\in \R$.
\end{lemma}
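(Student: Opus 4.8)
The plan is to reduce this $n$-dimensional statement to the already-established two-dimensional Lemma~\ref{est:tau} by collapsing the inner integral over $\R^n$ onto a line integral against the partial Radon transform. First I would introduce coordinates adapted to the direction $\theta$: writing $x = s\s\theta + y$ with $s = x\cdot\theta \in \R$ and $y \in \theta^\perp$, the map $(s,y)\mapsto x$ is an orthogonal change of variables, so $\d x = \d s\,\d y$ and $x\cdot\theta = s$. Under this substitution the weight $\e^{-\tau(x\cdot\theta-\eta)^2}$ depends only on $s$, and integrating first in $y$ over $\theta^\perp$ produces exactly the partial Radon transform. Concretely, setting
\[
 g(s,t):=\int_{\theta^\perp} G(s\s\theta+y,t)\,\d y = \mathscr{R}(G)(t,\theta,s),
\]
the double integral in the statement becomes
\[
 \frac{\tau}{\pi}\int_{\R}\int_{\R^n} G(x,t)\,\e^{-\tau((x\cdot\theta-\eta)^2+(t-t_0)^2)}\,\d x\,\d t
 = \frac{\tau}{\pi}\int_{\R^2} g(s,t)\,\e^{-\tau((s-\eta)^2+(t-t_0)^2)}\,\d s\,\d t.
\]

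At this point I would apply Lemma~\ref{est:tau} directly to the function $b:=g$ at the base point $(x_0,t_0)=(\eta,t_0)$, noting that $g(\eta,t_0)=\mathscr{R}(G)(t_0,\theta,\eta)$ is precisely the quantity we wish to recover. Since $G\in C^\infty_c(\R^{n+1})$, the function $g$ is smooth and compactly supported in $(s,t)$, so $g\in C^1_c(\R^2)$ and the lemma applies. This immediately yields
\[
 \abs{\mathscr{R}(G)(t_0,\theta,\eta) - \tfrac{\tau}{\pi}\int_{\R^2} g(s,t)\,\e^{-\tau((s-\eta)^2+(t-t_0)^2)}\,\d s\,\d t}\leq \tfrac{\sqrt{\pi}}{2}\norm{g}_{C^1}\tau^{-1/2},
\]
which is the desired inequality once the $C^1$ norm of $g$ is controlled.

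The one point requiring care --- and the main obstacle --- is verifying that $\norm{g}_{C^1}$ is bounded by $C\norm{G}_{C^1(\R^{n+1})}$ with a constant $C$ \emph{independent of $\theta$ and $\eta$}. Since $\supp G$ lies in some fixed ball $B_R(0)\subset \R^{n+1}$, for each fixed $(s,t)$ the set of $y\in\theta^\perp$ with $(s\s\theta+y,t)\in\supp G$ satisfies $\abs{y}\leq R$ (using $\abs{x}^2 = s^2+\abs{y}^2$), hence lies in an $(n-1)$-dimensional ball of radius $R$ whose volume $V\leq \omega_{n-1}R^{n-1}$ is bounded independently of $\theta$, $s$, $t$. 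Differentiating under the integral sign gives $\p_s g = \int_{\theta^\perp}\theta\cdot\nabla_x G\,\d y$ and $\p_t g = \int_{\theta^\perp}\p_t G\,\d y$, so each of $g$, $\p_s g$, $\p_t g$ is bounded in absolute value by $V\norm{G}_{C^1(\R^{n+1})}$. Therefore $\norm{g}_{C^1}\leq C\norm{G}_{C^1(\R^{n+1})}$ with $C$ depending only on $\supp G$, and substituting this bound into the estimate above completes the proof.
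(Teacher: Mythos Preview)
Your proof is correct and follows essentially the same route as the paper: the same orthogonal splitting $x=s\theta+y$ reduces the $n$-dimensional integral to a two-dimensional one against $\mathscr{R}(G)(t,\theta,s)$, after which Lemma~\ref{est:tau} applies. Your treatment of the uniform $C^1$ bound on $g=\mathscr{R}(G)(\cdot,\theta,\cdot)$ is in fact more explicit than the paper's, which simply appeals to compact support and differentiation under the integral sign.
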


\begin{proof}
Let $\theta\in S^{n-1}$ and $\eta\in \R$. We write any $x\in \R^n$ as
\[
x= s'\, \theta +y, \quad s'= x\cdot \theta \,\, \text{and} \,\, y\in \theta^\perp.
\]
By making the change of variables $x \mapsto (y,s')$ with $\d x= \d y\,\d s'$, we obtain
\begin{align*}
   & \int_{\R}\int_{\mathbb{R}^{n}}G(x,t)\,\e^{-\tau((x\cdot \theta-\eta)^2 + (t-t_0)^2)} \d x\, \d t \\
   &= \int_{\R} \left(\int_{\R} \int_{y\in\s\theta^\perp} G(s'\, \theta +y,t) \e^{-\tau((s'-\eta)^2 + (t-t_0)^2)} \d y \, \d s'   \right)\d t\\
   & = \int_{\R} \left(\int_{\R}\e^{-\tau((s'-\eta)^2 + (t-t_0)^2)}  \int_{y\in\s\theta^\perp}G(s'\, \theta +y,t)  \d y \, \d s'   \right)\d t\\
   & = \int_{\R^2} \mathscr{R}(G)(t,\theta, s')\, \e^{-\tau((s'-\eta)^2 + (t-t_0)^2)} \d s'\, \d t.
\end{align*}
The result will follow by applying Lemma \ref{est:tau} if we can show that
\begin{align*}
&\underset{(s',t)\in \R^2}{\sup} | (\mathscr{R}(G)(t,\theta, s')|+ \underset{(s',t)\in \R^2}{\sup} |\p_{s'} \mathscr{R}(G)(t,\theta, s')|+  \underset{(s',t)\in \R^2}{\sup} |\p_t \mathscr{R}(G)(t,\theta, s')|:= L(\theta)
\end{align*}
is uniformly bounded in $\theta\in S^{n-1}$.
But this follows, since $G$ is compactly supported and smooth so that by Fubini's theorem we can change the order of differentiation and integration.
%
\end{proof}

\subsection{Proof of Theorem \ref{thm:stability} in $n+1$ dimensions with $m\geq 2$}

The proof is quite similar to the one in $1+1$ dimension with $m=2$. The main difference between the proofs is that instead of having a  pointwise estimate of the function $v_0(a_1-a_2)$, see~\eqref{est:optimising}, we obtain estimates for the partial Radon transformation of this function when $n\geq 2$. 
Here $v_0$ satisfies $\square v_0=0$ as before, see~\eqref{eq:defv_0}. 

We have the integral identity~\eqref{eq:integral_identity_finite_difference}
\begin{equation*}
\begin{aligned}
  -m!\int_{\Omega\times [0,T]} a\s v_0\s v_1\s v_2\cdots v_m\s \d x \s \d t 
  &=
\int_{\Sigma}v_0\s D_{\eps_1,\ldots,\eps_m}^m\big|_{\eps=0}\Lambda(\eps_1f_1+\cdots+\eps_mf_m)\s \d S\\&\qquad + \frac{1}{\eps_1\eps_2\cdots\eps_m}\int_{\Omega\times [0,T]} v_0\square\s \widetilde{\mathcal{R}}\s \d x \s \d t .
\end{aligned}
\end{equation*}
It follows that we have an estimate similar to~\eqref{eq:estimate_for_optimization}
\begin{equation}
\begin{split}
&m! \left| \langle v_0(a_1-a_2), v_1\s \cdots \s v_m \rangle_{L^2(\Omega\times [0, T])} \right|\\
& \, \leq C \,\eps_1^{-1}\cdots \eps_m^{-1}  \left( \delta  + (\varepsilon_1 \Vert  f_1\Vert_{H^{s+1}(\Sigma)}+\cdots + \varepsilon_m \Vert  f_m\Vert_{H^{s+1}(\Sigma)})^{2m-1} \right),
\end{split}
\end{equation}
where 
\[
C=  \max  \left\{ 4, C(s,T) (\norm{a_1}_{E^{s+1}}^2+\norm{a_2}_{E^{s+1}}^2)  \right\}  \,(\norm{v_0}_{H^{-r}(\Sigma)}+ \norm{v_0}_{H^{-(s+1)}(\Omega\times [0, T])}).
\]
%

We first choose the boundary values as follows. Let $t_0\in \R$, $\eta\in \R$ and $\theta\in S^{n-1}$. 
For $j=1,2$, we choose 
\begin{equation}
 v_j=H_j \text{ and } f_j=H_j|_{\Sigma}, \quad j=1,2,
\end{equation}
where $H_j=H_j^{\tau,(t_0,\theta,s)}$ are as in Lemma \ref{estimate:H_jn} with $\gamma=s+1$ and the cutoff function $\chi_\alpha$  so that $\chi_\alpha(0)=1$.
We assume that $\alpha>0$ is small enough that $f_j$ vanishes near $\{t=0\}$, so that $\p_t^k\big|_{t=0} f_j=0$, $k=1,\ldots,s$, on $\p\Omega$.

For $j=3,\ldots,m$, we let $\tau_0>0$ and we choose
\begin{equation}\label{choice_of_vj_n}
 v_j=\tau_0^{-1/2}H_1^{\tau_0,(t_0,\theta,\eta)} \text{ and } f_j=\tau_0^{-1/2}H_1^{\tau_0,(t_0,\theta,\eta)}|_{\Sigma}.
\end{equation}
Let us write
\[
 \overline{v}=v_0v_3\cdots v_m.
\]
Note that $\overline{v}(x,t_0)=1$ if $x\cdot \theta=\eta$.

Substituting these choices of $v_j$ into inequality \eqref{eq:estimate_for_optimization}, and using Lemma \ref{est:tau_ndimension} with 
\[
G(x,t)= \overline{v}(x, t_0)(a_1-a_2)(x, t_0)\chi_\alpha(x\cdot \theta -t - (\eta-t_0))\chi_\alpha(-x\cdot \theta -t + (\eta+t_0))
\]
we get
 \begin{equation}\label{est:optimising_n}
 \begin{split}
&\left|\mathscr{R}(G)(t_0,\theta,\eta)\right| \leq   \frac{1}{\pi}  \left| \int_{\Omega\times [0,T]} \overline{v} (a_1 -a_2)H_1\, H_2 \,\d x\,\d t \right|\\ 
&    \qquad  + \left| \mathscr{R}(G)(t_0,\theta,\eta)-\frac{1}{\pi} \int_{\Omega\times [0,T]} \overline{v} (a_1 -a_2)H_1\, H_2 \,\d x\,\d t \right| \\
&   \leq  C\left(2\tau^{-1/2} + \epsilon_1^{-1}\cdots\epsilon_m^{-1}\Big(\delta +(\eps_1+\cdots +\eps_m)^{2m-1} \,(\tau^{\frac{s+2}{2}})^{2m-1}\Big) \right) \left\| \overline{v} \right\|_{C^1}.
\end{split}
\end{equation}
Since $\overline{v}(t_0,\theta,\eta)=1$, we have by the definition of the Radon transform that
\begin{equation}\label{ov_constan_on_slice}
 \begin{split}
     \mathscr{R}(G)(t_0,\theta,\eta)&= \int_{x\cdot \theta =\eta}G(x,t_0) \d x\\
     &= \int_{x\cdot \theta =\eta} \overline{v}(x,t_0)(a_1-a_2)(x,t_0) \chi_\alpha (0) \chi_\alpha (0) \d x\\
    & = \int_{x\cdot \theta =\eta} (a_1-a_2)(x, t_0) \d x=  \mathscr{R}(a_1-a_2)(t_0,\theta, \eta).
    \end{split}
    \end{equation}
By using this identity and \eqref{est:optimising_n} we obtain
\begin{align*}
&\left|\mathscr{R}(a_1-a_2)(t_0,\theta,\eta)\right| \\
&       \leq  \widetilde{C}  \left(2\tau^{-1/2} + \epsilon_1^{-1}\cdots\epsilon_m^{-1}\Big(\delta + (\eps_1+ \cdots+\eps_m)^{2m-1} \,(\tau^{\frac{s+2}{2}})^{2m-1}\Big) \right) \left\| \overline{v} \right\|_{C^1} \\
&=:C(\eps_j,\tau,\delta).
\end{align*}
Let us choose $F\in L^2(S^{n-1}\times[-M,M])$, $F\equiv 1$, where $\mathrm{supp}(a_j)\subset B_M(0)$ for $j=1,2$.
Applying Lemma \ref{RT:pointwise} with $f(\ccdot)=(a_1-a_2)(\ccdot,t_0)$ and $\beta=(n-1)/2$, we obtain
\[
 \norm{(a_1-a_2)(\ccdot,t_0)}_{H^{-(n-1)/2}(\R^n)}  \leq (2\pi)^{1/2}\,C(\eps_j,\tau,\delta)\,C_{\supp(a_j)}.
\]

By the a priori assumption on the potentials (admissible potentials), we know that
\[
\norm{(a_1-a_2)(\ccdot,t_0)}_{H^{s+1}(\R^n)}\leq 2L.
\] 
Hence, we have estimates for $(a_1-a_2)(\ccdot,t_0)$ in the Sobolev spaces $H^{-(n-1)/2}(\R^{n})$ and $H^{s+1}(\R^{n})$. Since $s+1>(n+1)/2$, we deduce that $-l (n-1)/2+ (1-l)(s+1)>n/2$ for all $l\in (0, 1/(2n)]$.
Using an interpolation argument, see for instance \cite[Theorem 6.2.4 or 6.4.5]{BL76}, and the Sobolev embedding $H^{\gamma}(\R^n)\subset L^\infty(\R^n)$, $\gamma>n/2$, and after changing constants if needed, we obtain
\begin{align*}
&\norm{(a_1-a_2)(\ccdot,t_0)}_{L^\infty(\R^n)} \leq c_{n,s,l}\norm{(a_1-a_2)(\ccdot,t_0)}_{H^{-(n-1)l/2+ (s+1)(1-l)}(\R^n)}\\
&\leq c_{n,s,l}\norm{(a_1-a_2)(\ccdot,t_0)}_{H^{-(n-1)/2}(\R^n)}^l\norm{(a_1-a_2)(\ccdot,t_0)}_{H^{s+1}(\R^n)}^{1-l}\\
& \leq \widetilde{C}  \left(2\tau^{-1/2} + \epsilon_1^{-1}\cdots\epsilon_m^{-1}\Big(\delta + (\eps_1+\cdots+ \eps_m)^{2m-1} \,(\tau^{\frac{s+2}{2}})^{2m-1}\Big) \right)^l\left\| \overline{v} \right\|_{C^1}^l. 
\end{align*}
 The above estimate is uniform in $t_0\in \R$, therefore
\begin{multline*}
\norm{a_1-a_2}_{L^\infty(\R^{n+1})}\\ \leq  \widetilde{C}  \left(2\tau^{-1/2} + \epsilon_1^{-1}\cdots\epsilon_m^{-1}\Big(\frac{\kappa^{2m-1}\delta}{mM} + \frac{(\eps_1+\cdots+ \eps_m)^{2m-1}}{m-1} \,(\tau^{\frac{s+2}{2}})^{2m-1} \Big)\right)^l\left\| \overline{v} \right\|_{C^1}^l
\end{multline*}
for all $l\in (0, 1/(2n)]$. Here we made a similar scaling of $\delta$ as in equation \eqref{est:optimising}.

As before, we choose $\eps_1=\ldots=\eps_m=\eps$.
Then the last step is to optimize in $\tau$ and $\eps$ as was done in Lemma \ref{lemma:final_estimate}.
We recall, that the quantity
$$
\eps\norm{f_j}_{H^{s+1}(\Sigma)} \sim \eps\tau^{\frac{s+2}{2}}
$$
should be bounded by $\kappa$.
The next lemma generalizes Lemma \ref{lemma:final_estimate}:

\begin{lemma}\label{lemma:final_estimate_n}
For any given $\delta\in(0,M)$ and $\kappa\in(0,1)$ small enough  we find $\eps(\delta,\kappa)=\eps$ and $\tau(\delta,\kappa)=\tau\geq 1$ such that
\begin{equation*}
f(\epsilon, \tau):= 2\tau^{-1/2} + \frac{\kappa^{2m-1}\delta}{mM} \epsilon^{-m}+\frac{1}{m-1}\epsilon^{m-1} \,\tau^{\frac{s+2}{2}(2m-1)}  \leq C_{s,M,\kappa} \, \delta^{\frac{m-1}{(2m-1)(m(s+2)+1) }}
\end{equation*}
and we also have
$$
\eps\tau^{\frac{s+2}{2}} \leq \kappa.
$$
The constant $C_{s,M,\kappa}$ is independent of $\delta$.
\end{lemma}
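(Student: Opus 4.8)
The plan is to treat this as the exact analogue of Lemma~\ref{lemma:final_estimate} (which is the case $m=2$) and to repeat the same Lagrange-type critical-point computation for general $m$. First I would introduce the shorthand $\widehat{s} := \frac{s+2}{2}(2m-1)$ and $\gamma_0 := \kappa^{2m-1}/M$, so that the quantity to be minimized becomes
\[
f(\eps,\tau) = 2\tau^{-1/2} + \frac{\gamma_0\delta}{m}\eps^{-m} + \frac{1}{m-1}\eps^{m-1}\tau^{\widehat{s}}.
\]
I would then compute the two partial derivatives $\p_\eps f = -(\gamma_0\delta)\eps^{-m-1} + \eps^{m-2}\tau^{\widehat{s}}$ and $\p_\tau f = -\tau^{-3/2} + \frac{\widehat{s}}{m-1}\eps^{m-1}\tau^{\widehat{s}-1}$, and set both to zero. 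The first equation gives $\eps^{2m-1}\tau^{\widehat{s}} = \gamma_0\delta$, and the second gives $\eps^{m-1} = A\,\tau^{-(\widehat{s}+1/2)}$ with $A := (m-1)/\widehat{s}$.

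Next I would solve this two-by-two system for $(\eps,\tau)$ as explicit powers of $\gamma_0\delta$, in the manner of \eqref{eq:critical1d}. Eliminating $\eps$ leads to $\tau^{-(2m\widehat{s}+2m-1)/(2(m-1))} = (\gamma_0\delta)\,A^{-(2m-1)/(m-1)}$, and here the decisive algebraic fact is that substituting $\widehat{s}$ makes $2m\widehat{s}+2m-1$ factor as $(2m-1)\bigl(m(s+2)+1\bigr)$, which is precisely the denominator appearing in the statement; I would verify this factorization carefully, since it is what produces the claimed exponent. With $\tau$ and $\eps$ so chosen I would then check that each of the three terms of $f$ is $O\bigl((\gamma_0\delta)^{(m-1)/((2m-1)(m(s+2)+1))}\bigr)$. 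The first term $\tau^{-1/2}$ is immediately this power of $\gamma_0\delta$ up to a constant. For the other two I would use the critical-point relations $\frac{\gamma_0\delta}{m}\eps^{-m} = \frac{1}{m}\eps^{m-1}\tau^{\widehat{s}}$ and $\eps^{m-1}\tau^{\widehat{s}} = A\,\tau^{-1/2}$, which show that both remaining terms are constant multiples of $\tau^{-1/2}$. Absorbing the constant $\gamma_0 = \kappa^{2m-1}/M$ then gives $f \leq C_{s,M,\kappa}\,\delta^{(m-1)/((2m-1)(m(s+2)+1))}$.

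Finally I would verify the two side conditions. For the solvability bound, combining $\eps^{2m-1}\tau^{\widehat{s}} = \gamma_0\delta$ with $\widehat{s}/(2m-1) = (s+2)/2$ yields the clean identity $\eps\tau^{(s+2)/2} = (\gamma_0\delta)^{1/(2m-1)} = \kappa\,(\delta/M)^{1/(2m-1)}$, which is $\leq\kappa$ for every $\delta\in(0,M)$; this is the exact analogue of the identity $\eps\tau^{\widehat{s}/3} = (\gamma_0\delta)^{1/3}$ used in Lemma~\ref{lemma:final_estimate}. For $\tau\geq 1$, I would observe that the exponent of $\gamma_0\delta$ in $\tau$ is negative while $\gamma_0\delta < \kappa^{2m-1} < 1$, so choosing $\kappa$ small enough forces $\tau\geq 1$. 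I do not expect a genuine obstacle here: the argument is a direct optimization in two variables, and the only point requiring real attention is the exponent bookkeeping — in particular confirming the factorization $2m\widehat{s}+2m-1 = (2m-1)(m(s+2)+1)$ and checking that all three terms collapse to the same power of $\delta$.
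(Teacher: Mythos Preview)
Your proposal is correct and follows essentially the same approach as the paper: introduce the abbreviations $\widehat{s}=(2m-1)(s+2)/2$ and $\gamma_0=\kappa^{2m-1}/M$, solve the two critical-point equations for $(\eps,\tau)$ as powers of $\gamma_0\delta$, and then verify the side conditions $\tau\geq 1$ and $\eps\tau^{(s+2)/2}=(\gamma_0\delta)^{1/(2m-1)}\leq\kappa$. Your explicit use of the critical-point relations to show that the second and third terms of $f$ are constant multiples of $\tau^{-1/2}$, and your verification of the factorization $2m\widehat{s}+2m-1=(2m-1)(m(s+2)+1)$, make transparent the steps the paper records as ``one can check''.
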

We leave the proof of this Lemma to Appendix \ref{app:B}. Finally, we choose the interpolation parameter $l=1/(2n)$. This finishes proof of Theorem \ref{thm:noise_Rn}. \qed

Finally, we formulate a reconstruction result from noisy measurements in $\R^{n+1}$, $n\geq 2$, in terms of the Radon transformation. This result is analogous to Theorem \ref{thm:noise}.

\begin{proposition}[Stability with noise and reconstruction $n\geq 2$]\label{thm:noise_Rn}
Let $\Omega\subset \R^n$ be a bounded domain with a smooth boundary. Let $m\geq 2$ be an integer, $r\in \R$ and $r\leq s\in \N$ and $s+1>(n+1)/2$. Assume that $a \in C_c^\infty(\Omega\times \R)$ is admissible and let $\Lambda:H^{s+1}(\Sigma^T)\to H^r(\Sigma^T)$
be the Dirichlet-to-Neumann map of the non-linear wave equation~\eqref{eq:intro_wave-eq}. 
Assume also that $\E:H^{s+1}(\Sigma^T)\to H^r(\Sigma^T)$. 

Let $\eps_0>0$, $M>0$, $0<T<\infty$ and $\delta\in (0,M)$ be such that
 \[
  \norm{\E (f)}_{H^r(\Sigma^T)}\leq \delta, 
 \]
 for all $f\in H^{s+1}(\Sigma^T)$ with $\Vert f\Vert_{H^{s+1}(\Sigma^T)}\leq \eps_0$. 

There are $\tau\geq 1$, $\eps_1,\ldots,\eps_m>0$ and a finite family of functions $\{H_{j}^{\tau,Q}\}\subset H^{s+1}(\Sigma^T)$
  where $j=1,\ldots,m$, and $Q\in \R\times (S^{n-1}\times \R)$, such that 
\begin{align*}
&\sup_{Q\in \R\times (S^{n-1}\times \R)}\Big| 
- \mathscr{R}(a)(Q)  \\
&\qquad \qquad  \qquad  \qquad  - \frac{1}{m!\pi}D_{\eps_1\cdots\eps_m}^m\big|_{\eps=0}\int_{\widetilde{\Sigma}} \psi(\Lambda + \E)(\eps_1H_1^{\tau,Q} +\cdots &+\eps_mH_m^{\tau,Q}) dS 
\Big| \\
& \qquad \qquad\leq C \delta^{2n \sigma (s)}.
\end{align*}
The exponent $\sigma=\sigma(s)$ and the constant $C$ are as in Theorem \ref{thm:stability}. The measurement function $\psi$ is as in \eqref{eq:measurementfun}.
\end{proposition}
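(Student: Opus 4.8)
The plan is to run the proof of Theorem~\ref{thm:stability} in $n+1$ dimensions while carrying the noise operator $\E$ through the estimates exactly as in the proof of Theorem~\ref{thm:noise}, and to stop at the level of the partial Radon transform rather than passing back to $L^\infty$. This is the reason for the exponent: since the conclusion is stated directly for $\mathscr{R}(a)$, the interpolation step that produces the factor $1/(2n)$ in $\sigma(s)$ is omitted, and the exponent is the raw rate $2n\sigma(s)=\frac{m-1}{(2m-1)(m(s+2)+1)}$ supplied by Lemma~\ref{lemma:final_estimate_n}.

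Fix $Q=(t_0,\theta,\eta)\in\R\times(S^{n-1}\times\R)$. As in the $n+1$-dimensional proof of Theorem~\ref{thm:stability}, I take $v_j=H_j^{\tau,Q}$ and $f_j=H_j^{\tau,Q}|_\Sigma$ for $j=1,2$, with $H_1^{\tau,Q},H_2^{\tau,Q}$ as in Lemma~\ref{estimate:H_jn} for $\gamma=s+1$ and $\chi_\alpha(0)=1$; for $j=3,\ldots,m$ I set $v_j=\tau_0^{-1/2}H_1^{\tau_0,Q}$ and $f_j=v_j|_\Sigma$, choosing $\alpha$ small enough that the compatibility conditions $\p_t^kf_j|_{t=0}=0$ hold. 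With $\psi=v_0|_\Sigma$ the measurement function of~\eqref{eq:measurementfun} and $\overline v:=v_0v_3\cdots v_m$, all the $v_j$ solve $\square v_j=0$, so the integral identity~\eqref{eq:integral_identity_finite_difference} applies; since $\psi$ is supported in $\widetilde\Sigma$, the boundary integral may be taken over $\widetilde\Sigma$.

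Inserting~\eqref{eq:integral_identity_finite_difference} into the reconstruction functional and recalling that the $\Lambda$-part contributes $-m!\int a\,v_0v_1\cdots v_m$, the error splits into three pieces as in the proof of Theorem~\ref{thm:noise},
\begin{equation*}
\Big|-\mathscr{R}(a)(Q)-\tfrac{1}{m!\pi}D^m_{\eps_1\cdots\eps_m}\big|_{\eps=0}\!\int_{\widetilde\Sigma}\psi(\Lambda+\E)(\eps_1H_1+\cdots+\eps_mH_m)\,\d S\Big|\le I_1+I_2+I_3,
\end{equation*}
where $I_1=\big|\mathscr{R}(a)(Q)-\tfrac1\pi\int a\,\overline v\,H_1H_2\,\d x\,\d t\big|$ is the approximation term, $I_2$ collects the remainder $\tildeR$, and $I_3$ collects the noise. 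For $I_1$ I set $G(x,t)=\overline v(x,t_0)\,a(x,t_0)\,\chi_\alpha(x\cdot\theta-t-(\eta-t_0))\chi_\alpha(-x\cdot\theta-t+(\eta+t_0))$; by the slice identity~\eqref{ov_constan_on_slice}, the relations $\overline v(x,t_0)=1$ and $\chi_\alpha(0)=1$ on $x\cdot\theta=\eta$ give $\mathscr{R}(G)(Q)=\mathscr{R}(a)(Q)$, and Lemma~\ref{est:tau_ndimension} bounds $I_1\le C\tau^{-1/2}$. For $I_2$ the remainder bound~\eqref{est:square_tildeR}, duality against $v_0$, and $\norm{H_j^{\tau,Q}}_{H^{s+1}(\Sigma)}\le C\tau^{(s+2)/2}$ from Lemma~\ref{estimate:H_jn} give $I_2\le C\,\eps_1^{-1}\cdots\eps_m^{-1}(\eps_1+\cdots+\eps_m)^{2m-1}\tau^{\frac{s+2}{2}(2m-1)}$. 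For $I_3$ the hypothesis $\norm{\E(f)}_{H^r(\Sigma^T)}\le\delta$ and duality give $I_3\le C\,\eps_1^{-1}\cdots\eps_m^{-1}\,\delta\,\norm{v_0}_{\dualH^{-r}(\Sigma)}$.

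It remains to set $\eps_1=\cdots=\eps_m=\eps$, keep the well-posedness constraint $\eps\tau^{(s+2)/2}\le\kappa$, and balance the three rates by Lemma~\ref{lemma:final_estimate_n}, which yields the bound $C\delta^{(m-1)/((2m-1)(m(s+2)+1))}=C\delta^{2n\sigma(s)}$. Since $\tau$, $\eps$, and all constants are uniform in $Q$, taking the supremum over $Q$ completes the proof. I expect the main obstacle to be the term $I_1$: one must reconcile the $L^2$ pairing $\langle v_0a,v_1\cdots v_m\rangle$ produced by~\eqref{eq:integral_identity_finite_difference}, whose integrand is evaluated at the running time $t$, with the frozen-time integrand of $G$ required by Lemma~\ref{est:tau_ndimension}, and check that this discrepancy together with the compactly supported cut-off tails is absorbed into the $\tau^{-1/2}$ rate; the pieces $I_2$ and $I_3$ then follow verbatim from the proof of Theorem~\ref{thm:noise}.
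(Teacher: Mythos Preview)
Your proposal is correct and follows essentially the same route as the paper: the proposition is not given a separate proof in the text, but it is precisely the combination of the $n+1$-dimensional argument for Theorem~\ref{thm:stability} (Section~\ref{sec:main-results-nD}) with the noise-handling scheme of Theorem~\ref{thm:noise} (Section~\ref{sec:main-results-1D}), stopping at the Radon-transform level before interpolation. Your decomposition into $I_1,I_2,I_3$, the choice of $v_j$, the use of Lemmas~\ref{estimate:H_jn}, \ref{est:tau_ndimension} and~\ref{lemma:final_estimate_n}, and the identification of the exponent $2n\sigma(s)$ all match the paper; the running-time versus frozen-time discrepancy you flag for $I_1$ is genuinely glossed over in~\eqref{est:optimising_n} and is indeed absorbed into the $\tau^{-1/2}$ rate by the $C^1$ bound in Lemma~\ref{est:tau_ndimension}.
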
 

We conclude this section by noting that we expect it to be possible to remove the auxiliary measurement function $\psi$ from the proofs in odd dimensions $n$.
%
The function $v_0$ was essentially only used to accomplish the integration by parts argument, which led to the integral identity~\eqref{eq:integral_identity_finite_difference}. Recall that the potentials $a$ we consider are compactly supported. Therefore, if $n$ is odd,
%
the Huygens principle implies that the terms that depend on $a$ in the expansion of a solution $u$ with respect to the parameters $\eps_1,\ldots,\eps_m$, 
will exit $\Omega$ before time $T$. Thus using $v_0$ is not necessary.
%

%
%
%
 \appendix
 \section{Proofs related to the forward problem}\label{app:A}
 We collect the proofs of the results for the forward problem of Section \ref{sec:forward-problem} here, since they are quite standard.

\begin{proof}[Proof of Corollary \ref{cor:enery-estimate}]
 By Theorem~\ref{thm:KKL-energy} we have that $u\in X^{s+1}$.
 Let $l\in \{1,\ldots,n\}$ and let us denote for simplicity $\p=\p_{x^l}$. Then $\tilde{u}=\p u$ satisfies
 \begin{equation}
\begin{cases}
\square \tilde{u}  = \p F, \quad &\text{ in } \Omega\times[0,T],\\
\tilde{u}=\p f, \,\quad\ &\text{ on } \p\Omega\times[0,T], \\
\tilde{u}\big|_{t=0} = \p \psi_0,\quad \partial_t \tilde{u} \big|_{t=0} = \p \psi_1, &\text{ in } \Omega.
\end{cases}
\end{equation}
Here we have that $\p F\in L^1([0,T]\; H^{s-1}(\Omega))$, $\p \psi_0\in H^{s}(\Omega)$, $\p \psi_1\in H^{s-1}(\Omega)$ and $\p f\in H^{s}(\Sigma)$. We apply Theorem~\ref{thm:KKL-energy} for $\tilde{u}$ and with $s$ replaced by $s-1$. For this, it will be needed that $\p_t^{s-1}\p F\in L^1([0,T],L^2(\Omega))$, which is satisfied by the additional assumption $\p_t^{s-1}F\in L^1([0,T]\; H^{1}(\Omega))$. Therefore, by Theorem~\ref{thm:KKL-energy}, we have that 
\[
 \tilde{u} \in X^{s}=C([0,T]\; H^{s}(\Omega))\cap C^{s}([0,T]\; L^2(\Omega))
\]
and 
\begin{equation}\label{energy_estim_proof_Es}
\begin{split}
 &\norm{\p_t^{s}\tilde{u}(\ccdot,t)}_{L^2(\Omega)} \leq c_T\left(\norm{\p F}_{L^1([0,T]\; H^{s-1}(\Omega))}+\norm{\p_t^{s-1} \p F}_{L^1([0,T]\; L^2(\Omega))}\right.  \\
 &\qquad\qquad\qquad\qquad\qquad\qquad+\left.\norm{\p \psi_0}_{H^{s}(\Omega)}+\norm{\p \psi_1}_{H^{s-1}(\Omega)}+\norm{\p f}_{H^{s}(\Sigma)}\right) \\
 &\qquad\qquad\qquad\quad\leq c_T\big(\norm{F}_{L^1([0,T]\; H^{s}(\Omega))}+\norm{\p_t^{s-1} F}_{L^1([0,T]\; H^1(\Omega))} \\
 &\qquad\qquad\qquad\qquad\qquad\qquad+\norm{\psi_0}_{H^{s+1}(\Omega)}+\norm{ \psi_1}_{H^{s}(\Omega)}+\norm{f}_{H^{s+1}(\Sigma)}\big).
 \end{split}
\end{equation}
Since we also have $u\in X^{s+1}$, the above yields that 
\[
 u \in 
 C^{s}([0,T]\; H^1(\Omega)).
\]

Repeating the argument several times by taking derivatives $\p_{x_{l_1}}\p_{x_{l_2}}\cdots \p_{x_{l_k}}$ of $u$ and the initial and boundary values for all $k=2,\ldots, s$, we obtain
\[
 u \in 
 C^{s+1-k}([0,T]\; H^k(\Omega)), \quad k=0,\ldots,s+1.
\]
Summing up the corresponding estimates similar to~\eqref{energy_estim_proof_Es}, we have~\eqref{energy_estim_Es}. 
\end{proof}

Next we prove Lemma \ref{lemma:nonlinear-solutions} which states that for sufficiently small initial and boundary data there exists a unique small solution to the non-linear wave equation
\begin{equation}\label{eq:proof_of_lemma_banach}
\begin{cases}
\square u + au^m   =0, &\text{in } \Omega\times[0,T],\\
u=\eps f, &\text{on } \p\Omega\times[0,T], \\
u\big|_{t=0} = \partial_t u\big|_{t=0} = 0,&\text{in }\Omega.
\end{cases}
\end{equation}
(This equation is the equation~\eqref{eq:nonlinear_equation_lemma} in Lemma~\ref{lemma:nonlinear-solutions}.)

\begin{proof}[Proof of Lemma \ref{lemma:nonlinear-solutions}]
We prove the existence and uniqueness of small solutions to \eqref{eq:proof_of_lemma_banach} by using the Banach fixed-point theorem. See for example~\cite{Ze95} for the latter.
For this purpose, we define a contraction mapping $\Theta: B_\rho(0) \to B_\rho(0)$ as follows.
Let first $F \in E^{s+1}$.  Then we certainly have 
\[
 \p_t^kF\in L^1([0,T]\; H^{s-k}(\Omega)), \quad k=0,1,\ldots,s.
\]
Assume also that $\p_t^kF|_{t=0}=0$, $k=0,\ldots,s$.
%
%
Let $f\in H^{s+1}(\Sigma)$ be a function satisfying $\p_t^kf|_{t=0}=0$, $k=0,\ldots,s$.
Now, Corollary \ref{cor:enery-estimate} implies that the linear problem
\begin{equation}\label{wave-eq_Banach}
\begin{cases}
\square u  = F, &\text{in } \Omega\times[0,T],\\
u=f, &\text{ on } \p\Omega\times[0,T],\\
u\big|_{t=0} = \partial_t u\big|_{t=0} = 0,&\text{on } \p\Omega
\end{cases}
\end{equation}
%
has a unique solution $u\in E^{s+1}$. Let 
\[
 S: E^{s+1}\cap \{\p_t^kF|_{t=0} =0,\ k=0,\ldots, s\} \to E^{s+1}\cap \{\p_t^kF|_{t=0} =0, \ k=0,\ldots, s\}
\]
be the source-to-solution map which takes $F$ to the corresponding solution $u$ of~\eqref{wave-eq_Banach} (with $f$ fixed) as $F\mapsto u$.
%
%
We then define a new non-linear mapping $\Theta : B_\rho(0) \to B_\rho(0)$ on a ball $B_\rho(0)\subset E^{s+1}\cap \{\p_t^kF|_{t=0} =0,\ k=0,\ldots, s\}$ via the formula
\begin{equation}\label{eq:sigma}
\Theta(u) = S(-au^m),
\end{equation}
where $\rho>0$ will be fixed later. The Banach space $E^{s+1}$ is an algebra since $s+1>(n+1)/2$. We also have $\p_t^k(-au^m)|_{t=0}=0, \ k=0,\ldots,s$. We conclude that the map $\Theta$ is well-defined. 

Let us verify that $\Theta$ defined by \eqref{eq:sigma} is a contraction from a small ball into itself after we have chosen $\kappa$ and $\rho$ small enough.
The energy estimate \eqref{energy_estim_Es} shows that for $u\in B_\rho(0)$ we have 
\begin{align}\label{kappa_dependence}
\Vert \Theta(u) \Vert_{E^{s+1}} &= \Vert S(-au^m) \Vert_{E^{s+1}}
\leq
c_{s,T}\left( \Vert f\Vert_{H^{s+1}(\Sigma)} 
+
\norm{au^m}_{E^s}
 \right)\\
 &\leq 
C_{s,T}\left( \kappa
+ \norm{a}_{C^{s}} \Vert u \Vert_{E^s}^m
\right)\leq C_{s,T}\left( \kappa
+  \norm{a}_{C^{s}} \rho^m\right).
\end{align}
So, if $\rho$ and $\kappa$ are chosen so that
\[
0<\rho^{m-1} < \frac{1}{2C_{s,T} L}
\quad\text{and}\quad
0<\kappa\leq \frac{\rho}{2 C_{s,T}},
\]
then we have that $\Vert \Theta(u)\Vert_ X < \rho$, giving $\Theta : B_\rho(0) \to B_\rho(0)$.

To show that $\Theta$ is a contraction mapping, let $u,v\in B_\rho(0)$. Then the function $S(-au^m) - S(-av^m)$ solves
\begin{equation}
\begin{cases}
\square \big(S(-au^m) - S(-av^m)\big)  = -au^m-av^m, \qquad\text{in } \Omega\times[0,T],\\
S(-au^m) - S(-av^m)=0, \hspace{103pt}\text{on } \p\Omega\times[0,T], \\
\big(S(-au^m) - S(-av^m)\big)\big|_{t=0} = \partial_t \big(S(-au^m) - S(-av^m)\big)\big|_{t=0} = 0, &\text{in } \Omega.
\end{cases}
\end{equation}
Consequently, we have by using the energy estimate~\eqref{energy_estim_Es} again that
%
\begin{align*}
\Vert \Theta(u) - \Theta(v)\Vert_{E^{s+1}} 
&=
\Vert S(-au^m) - S(-av^m) \Vert_{E^{s+1}}\leq
c_{s,T} \norm{au^m-av^m}_{E^{s}} \\
&\leq C_{s,T}\norm{a}_{E^{s}}\norm{u-v}_{E^{s+1}}\norm{P_{m-1}(u,v)}_{E^{s+1}} \\
&\leq \s C_{s,T}m\norm{a}_{E^{s}}\rho^{m-1}\norm{u-v}_{E^{s+1}}. 
\end{align*}
Here we expanded 
    \begin{equation}\label{uv_expansion}
     u^m-v^m=(u-v)P_{m-1}(u,v),
    \end{equation}
where $P_{m-1}(a,b)=\sum_{k=0}^{m-1} a^{m-1-k}b^k$. 
Redefining $\rho>0$ by
\[
 \rho^{m-1}< \frac{1}{2m\s C_{s,T} L}
\]
yields
\[
\Vert \Theta(u)-\Theta(v)\Vert_X \leq \widetilde{C} \Vert u-v\Vert_X.
\]
Here $\widetilde{C}<1/2$. Thus $\Theta : B_\rho(0)\to B_\rho(0)$ is a contraction as claimed.

To finish the proof, note that if the Banach fixed-point iteration is started at $u_0 =0$, we have an estimate for a fixed point $u$ in terms of $u_1 := \Theta(0)$ as follows:
\begin{align*}
\Vert u\Vert_{E^{s+1}} \leq \frac{1}{1-\widetilde{C}} \Vert u_1\Vert_{E^{s+1}}
\leq 2C_{s,T} 
 \Vert f\Vert_{H^{s+1}(\Sigma)}.
\end{align*}
Here we once again used the energy estimate for $u_1$ together with the fact that $\Theta(0)$ corresponds to a solution of the linear problem with no source.
The first inequality follows from a simple argument using geometric sum, see e.g. \cite[Theorem 1.A]{Ze95}. 
\end{proof}
\section{Auxiliary results}\label{app:B}

\subsection{Construction of the measurement function}
Here we construct a measurement function $\psi$ by finding a function $v_0\in C^\infty(\R^{n+1})$ satisfying \eqref{eq:defv_0}. Let us denote
\begin{align*}
\alpha &:= (t_2-t_1 + d)/2,\\
t_0 &:= (t_2+t_1)/2,
\end{align*}
where $t_1,\s t_2$ and $d$ were defined in~\eqref{def:d} and~\eqref{def:t1t2}.
By definition of $d$ there exists $x_0\in \R^n$ such that $\Omega\subset B_{d/2}(x_0)$. Let us pick a smooth cut-off function $\chi_\alpha(l)\in C_0^\infty(\R)$ such that
\begin{equation}
\chi_\alpha(l)=
\begin{cases}
1,&\text{when } |l|\leq\alpha,\\
0,&\text{when } |l|>\alpha+\eps,
\end{cases}
\end{equation}
where $0<\eps<\lambda$ and $\lambda$ is as in \eqref{def:t1t2}.
Let us define
$$
v_0(x,t):= \chi_\alpha((x-x_0)\cdot\theta - (t-t_0))
$$
for some $\theta\in S^{n-1}$ (and $\theta=\pm 1$ when $n=1$).
Clearly $v_0$ satisfies the wave equation $\square v_0 = 0$ in $\R^n\times\R$.

It is also straightforward to verify that $v(x,t) = 1$ if $(x,t)\in \Omega\times[t_1,t_2]$ and $v_0(x,t) = 0$ in $\Omega$, when $t\in [T-r,T]$ for all $0<r<T-\eps$. In particular, $v_0=\p_t v_0 = 0$ in $\Omega\times]T-r,T]$ for all $r\geq 0$ small enough. We may now set $\psi:=v_0|_{\Sigma}$.
%

\subsection{Proofs of lemmata in higher dimensions with $m\geq 2$}

\begin{proof}[Proof of Lemma \ref{RT:pointwise}]
The proof follows from the Fourier slice theorem. We compute the $H^{-\beta}$ norm of $f$ by using polar coordinates on the Fourier side: $\xi \mapsto (\sigma, \theta)$ where $\xi=\sigma\, \theta$ with $\sigma=|\xi|$ and $\theta=\xi/|\xi|$, and so $\d\xi= \sigma^{n-1}\d\sigma\s \d\theta$. 
By using Plancherel and Fourier slice theorems we obtain 
\begin{align*}
(2\pi)^{n-1}\norm{f}^2_{H^{-\beta}(\R^n)}&= (2\pi)^{n-1}\int_{\R^n} (1+ |\xi|^2)^{-\beta}|\widehat{f}(\xi)|^2\s \d\xi\\
&=(2\pi)^{n-1}\int_{S^{n-1}}\int_{\R} (1+\sigma^2)^{-\beta}  |\widehat{f}(\sigma\theta)|^2    \sigma^{n-1} \d\sigma\, \d\theta\\
& = \int_{S^{n-1}}\int_{\R} (1+\sigma^2)^{-\beta}  |\F_{\eta\to\sigma}\big(\pmb{R}f(\theta, \eta)\big)(\sigma)|^2 \sigma^{n-1} \d\sigma\, \d\theta\\
& \leq \int_{S^{n-1}}\int_{\R} (1+\sigma^2)^{\frac{n-1}{2}-\beta}  |\F_{\eta\to\sigma}\big(\pmb{R}f(\theta, \eta)\big)(\sigma)|^2 \d\sigma\, \d\theta\\
& \leq \int_{S^{n-1}}\int_{\R}  |\F_{\eta\to\sigma}\big(\pmb{R}f(\theta, \eta)\big)(\sigma)|^2 \d\sigma\, \d\theta\\
& =(2\pi)^n \int_{S^{n-1}} \int_{\R}  |\pmb{R}f(\theta, \eta)|^2 \d \eta\, \d\theta\\
& \leq (2\pi)^n \, C_0^2\,  \int_{S^{n-1}} \int_{-M}^M |F(\theta, \eta)|^2 \d \eta\, \d\theta\\
&=  (2\pi)^n \, C_0^2\,  \norm{F}_{L^2(S^{n-1}\times [-M,M])}^2.
\end{align*}
Here in the second to last inequality we used that Radon transformation of $f$ is supported in $[-M,M]$ in its  variable $\eta$, since $\text{supp}(f)\subset B_M(0)$. 
\end{proof}

\begin{proof}[Proof of Lemma \ref{estimate:H_jn}]
 Let $\theta\in S^{n-1}$ and $c_0\in \R$. Let us write $c_0=t_0-\eta$ and $F(x,t)= H(x\cdot\theta - t +c_0)$. 
The proof is almost the same as that of Lemma~\ref{est:H_j}. We however sketch a proof to help the reader to see why the constant $C$ is independent of $t_0$, $\theta$ and $\eta$. 

Let $\beta_1$ be a multi-index and $\beta_2\in \N$. Then for all $\tau$ large enough we obtain 
\begin{equation}
 \begin{split}
  &\norm{\p_{x}^{\abs{\beta_1}} \p_t^{\beta_2} F}_{L^2(\Omega\times [0,T])}^2=\tau  \int_{\Omega}\int_0^T\left[\p_{x}^{\abs{\beta_1}} \p_t^{\beta_2}\Big(\chi_\alpha(x\cdot\theta - t + c_0)\, \e^{-\frac{\tau}{2} (x\cdot\theta - t + c_0)^2}\Big)\right]^2 \d t \, \d x\\
&\leq C\s \tau \tau^{2(\abs{\beta_1}+\beta_2)}\int_{\Omega}\int_0^T|\chi_\alpha(x\cdot\theta - t + c_0)|^2\, (x\cdot\theta - t + c_0)^{2(\abs{\beta_1}+\beta_2)}\e^{-\tau (x\cdot\theta - t + c_0)^2} \d t \, \d x\\
&=C\s \tau\tau^{2(\abs{\beta_1}+\beta_2)} \int_{\Omega} \int_{x\cdot\theta+c_0}^{x\cdot \theta -T+c_0} |\chi_\alpha(h)|^2\, h^{2(\abs{\beta_1}+\beta_2)}\e^{-\tau h^2} \d h \, \d x \\
&\leq C\s \tau\tau^{2(\abs{\beta_1}+\beta_2)} \int_{\Omega} \int_{-\infty}^{\infty} |\chi_\alpha(h)|^2\, h^{2(\abs{\beta_1}+\beta_2)}\e^{-\tau h^2} \d h \, \d x \\
&\leq C \tau\tau^{2(\abs{\beta_1}+\beta_2)}\tau^{-(\abs{\beta_1}+\beta_2)-1/2}\int_\Omega dx=C\tau^{\abs{\beta_1}+\beta_2+1/2},
 \end{split}
\end{equation}
where $C$ is independent of $c_0=t_0-\eta$ and $\theta$.
Here we made a change of variables
\[
 h=x\cdot\theta - t +c_0
\]
in the integral in the variable $t$, while considering  $x$ is fixed. We also absorbed terms of lower order powers of $\tau$ into the constant $C$ (see proof of Lemma~\ref{est:H_j} for an explanation),
and used $\int_\R h^{2(\abs{\beta_1}+\beta_2)}e^{-\tau h^2}dh\sim \tau^{-(\abs{\beta_1}+\beta_2)-1/2}$.

By interpolation, see e.g. \cite[Theorem 6.2.4/6.4.5]{BL76}, we obtain for all $\gamma\geq 0$
\begin{equation}\label{est:betas2}
\begin{aligned}
\norm{F}_{H^\gamma(\Omega\times [0,T])}^2& \leq C \tau^{\gamma+1/2}.
\end{aligned}
\end{equation}
By trace theorem, we obtain
\[
\norm{F}_{H^\gamma(\Sigma)}^2\leq C \norm{F}_{H^{\gamma+1/2}(\Omega\times [0,T])}^2\leq  
C\tau^{\gamma + 1}. 
\]
Similar argument yields the same estimate for $H_2^{\tau, (t_0, \theta,\eta)}$. 
This completes the proof.
\end{proof}

\begin{proof}[Proof of Lemma \ref{lemma:final_estimate_n}]
To simplify notation, let $\widehat{s}:=(2m-1)(s+2)/2$ and $\gamma_0=\kappa^{2m-1}/M$. A direct computation shows that
\begin{equation}
\partial_{\epsilon}f= -(\gamma_0\delta) \epsilon^{-m-1}+ \eps^{m-2}\tau^{\hat{s}}, \quad \partial_\tau f=- \tau^{-3/2}+ \widehat{s}\epsilon^{m-1} \tau^{\hat{s}-1}.
\end{equation}
Making $\partial_{\epsilon}f=\partial_{\tau}f=0$, we obtain the critical points of $f$, namely
\begin{align}
\tau&= \left(\frac{\widehat{s}}{m-1}\right)^{-\dfrac{2(2m-1)}{2m(\widehat{s}+1)-1}} (\gamma_0\delta)^{-\dfrac{2(m-1)}{2m(\widehat{s}+1)-1}},\\ 
\eps&=\left(\frac{\widehat{s}}{m-1} \right)^{\dfrac{2\widehat{s}}{2m(\widehat{s}+1)-1}}
(\gamma_0\delta)^{\dfrac{2\widehat{s}+1}{2m(\widehat{s}+1)-1}}.
\end{align}
With these choices of $\tau$ and $\epsilon$, one can check that $\tau^{-1/2}$, $(\gamma_0\delta) \epsilon^{-m}$ and $\epsilon^{m-1} \tau^{\widehat{s}}$ are all bounded by $C_{s,m}\, (\gamma_0\delta)^{(m-1)/[(2m-1)(m(s+2)+1)]}$. 
It is also straightforward to verify that $\tau\geq 1$ for $\kappa$ small enough. 

Furthermore, since
$$
\eps\tau^{\widehat{s}/(2m-1)} = (\gamma_0\delta)^{1/(2m-1)},
$$
we have that
$$
\eps\tau^{\frac{s+2}{2}}
 \leq \kappa
$$
for any $0<\delta<M$.
This finishes the proof. 
\end{proof}

\section{Higher order finite differences}\label{app:C}

Let us define the higher order finite difference operator by
\begin{equation}
D_{\eps_1,\ldots,\eps_m}^m\big|_{\eps=0} u_{\eps_1f_1+\cdots+\eps_mf_m}
=
\frac{1}{\eps_1\cdots\eps_m}\sum_{\sigma\in\{0,1\}^m}
(-1)^{|\sigma|+m}u_{\sigma_1\eps_1 f_1+\ldots+\sigma_m\eps_m f_m},
\end{equation}
where the sum is over all combinations of $\{0,1\}$ of length $m$.
Then for the solution $u$ of \eqref{eq:epsilons} we have
\begin{align}
\square u_{\eps_1f_1+\ldots+\eps_m f_m}
&=
-a\sum_{k_1,\ldots,k_m}\binom{m}{k_1,\ldots,k_m}\eps_{k_1}\cdots\eps_{k_m} v_{k_1}\cdots v_{k_m} + \square \mathcal{R}\\
&=-a(\eps_1 v_1 + \ldots + \eps_m v_m)^m + \square \mathcal{R}.
\end{align}
Applying the finite difference operator to this reduces to the following algebraic identity about numbers.
\begin{lemma}\label{lemma:combinatorics}
Let $x_1,\ldots,x_m\in\R$. Then
\begin{equation}\label{eq:combinatorics}
I(x_1,\ldots,x_m):=\sum_{\sigma\in\{0,1\}^m} (-1)^{m+|\sigma|}(\sigma_1 x_1 +\ldots+\sigma_m x_m)^m
=
m!x_1\cdots x_m.
\end{equation}
\end{lemma}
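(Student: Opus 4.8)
The plan is to recognise $I(x_1,\ldots,x_m)$ as an $m$-fold mixed finite difference of the polynomial $g(t_1,\ldots,t_m)=(t_1+\cdots+t_m)^m$ and to isolate the single surviving term by a multinomial expansion. As a first observation, note the sign identity $(-1)^{m+|\sigma|}=(-1)^{m-|\sigma|}$, so the essential structure is that each coordinate $\sigma_i$ runs over $\{0,1\}$ independently carrying an alternating weight; indeed $I$ is exactly $\Delta_{x_1}\cdots\Delta_{x_m}g(0)$, where $\Delta_{x_i}$ is the forward difference of step $x_i$ in the variable $t_i$.

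The key computational point is that, since $\sigma_i\in\{0,1\}$, one has $\sigma_i^{\,k}=\sigma_i$ for every integer $k\geq 1$. Expanding by the multinomial theorem therefore gives
\begin{equation*}
(\sigma_1 x_1+\cdots+\sigma_m x_m)^m=\sum_{k_1+\cdots+k_m=m}\binom{m}{k_1,\ldots,k_m}\Big(\prod_{i:\,k_i\geq 1}\sigma_i\Big)\prod_{i=1}^m x_i^{k_i},
\end{equation*}
because the factor $\sigma_i^{k_i}$ collapses to $\sigma_i$ whenever $k_i\geq 1$ and disappears when $k_i=0$. Substituting this into the definition of $I$ and exchanging the two finite sums, the contribution of a fixed multi-index $(k_1,\ldots,k_m)$ is the monomial $\prod_i x_i^{k_i}$ times the multinomial coefficient times the inner sum
\begin{equation*}
\sum_{\sigma\in\{0,1\}^m}(-1)^{m+|\sigma|}\prod_{i\in S}\sigma_i,\qquad S:=\{i:k_i\geq 1\}.
\end{equation*}

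The heart of the argument is the evaluation of this inner sum. The product $\prod_{i\in S}\sigma_i$ forces $\sigma_i=1$ for all $i\in S$, while the coordinates outside $S$ remain free; writing $|\sigma|=|S|+|T|$ with $T$ the set of free coordinates equal to $1$ gives $(-1)^{m+|S|}\sum_{T\subseteq S^c}(-1)^{|T|}=(-1)^{m+|S|}(1-1)^{\,m-|S|}$, which vanishes unless $|S|=m$. Hence only multi-indices of full support survive, and together with $\sum_i k_i=m$ this forces $k_1=\cdots=k_m=1$. For that single multi-index the coefficient is $\binom{m}{1,\ldots,1}=m!$, the monomial is $x_1\cdots x_m$, and the inner sum reduces to the lone term $\sigma=(1,\ldots,1)$ with sign $(-1)^{2m}=1$. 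Assembling these pieces yields $I=m!\,x_1\cdots x_m$.

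I do not anticipate a genuine obstacle: the identity is elementary, and the only point requiring care is the bookkeeping that isolates the all-ones multi-index and verifies that every other term cancels through the mechanism $\sum_{T\subseteq A}(-1)^{|T|}=0$ for $A\neq\emptyset$. As a shortcut one could instead argue analytically using the iterated fundamental theorem of calculus: since the mixed partial derivative $\partial_{t_1}\cdots\partial_{t_m}g\equiv m!$ is constant, one gets directly $I=\Delta_{x_1}\cdots\Delta_{x_m}g(0)=\int_0^{x_1}\!\cdots\!\int_0^{x_m} m!\,\mathrm{d}t_1\cdots\mathrm{d}t_m=m!\,x_1\cdots x_m$.
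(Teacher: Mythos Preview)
Your proof is correct. Both your argument and the paper's expand via the multinomial theorem and conclude that only the multi-index $k_1=\cdots=k_m=1$ survives, but the mechanisms for killing the other terms differ. The paper first shows that $I(x_1,\ldots,x_m)$ vanishes whenever some $x_j=0$ (by pairing the sums with $\sigma_j=1$ and $\sigma_j=0$), and then argues that, as a homogeneous polynomial of degree $m$ vanishing on every coordinate hyperplane, $I$ must be a constant multiple of $x_1\cdots x_m$; the constant is read off from the $\sigma=(1,\ldots,1)$ term. You instead compute the inner $\sigma$-sum for each multi-index directly, using the neat observation $\sigma_i^{k_i}=\sigma_i$ for $k_i\geq 1$ to reduce it to $(-1)^{m+|S|}(1-1)^{m-|S|}$. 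Your route is more explicit and avoids the iterative splitting; the paper's is a bit more structural. Your alternative via the iterated fundamental theorem of calculus, exploiting that $\partial_{t_1}\cdots\partial_{t_m}(t_1+\cdots+t_m)^m\equiv m!$, is a genuinely different and very short argument not present in the paper.
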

\begin{proof}
Let $j\in\{1,\ldots,m\}$ and split the summation in \eqref{eq:combinatorics} with respect to
\begin{align*}
\sigma' &= (\sigma_1,\ldots,\sigma_{j-1},1,\sigma_{j+1},\ldots,\sigma_m),\\
\sigma'' &= (\sigma_1,\ldots,\sigma_{j-1},0,\sigma_{j+1},\ldots,\sigma_m).
\end{align*}
Since $|\sigma'| = |\sigma''|+1$, we have
\begin{align*}
&\sum_{\sigma\in\{0,1\}^m} (-1)^{m+|\sigma|}(\sigma_1 x_1 +\ldots+\sigma_j x_j+\ldots+\sigma_m x_m)^m\\
&\quad=
-\sum_{\sigma\in\{0,1\}^{m-1}} (-1)^{m+|\sigma|}(\sigma_1 x_1 +\ldots+1\cdot x_j+\ldots+\sigma_m x_m)^m,\\
&\qquad+
\sum_{\sigma\in\{0,1\}^{m-1}} (-1)^{m+|\sigma|}(\sigma_1 x_1 +\ldots+0\cdot x_j+\ldots+\sigma_m x_m)^m.
\end{align*}
Then note that, if $x_j=0$, the above implies $I(x_1,\ldots,x_{j-1},0,x_{j+1},\ldots,x_m)=0$.
Let us express $I$ via the multinomial formula and write
$$
I(x_1,\ldots,x_m) = I_1+I_1^c,
$$
where $I_1$ contains all terms of $I(x_1,\ldots,x_m)$ of the form $x_1^{p_1}\cdots x_m^{p_m}$ with $p_1\geq 1$ and $I_1^c$ contains the remaining terms of the form $x_1^0x_2^{p_2}\cdots x_m^{p_m}$.
Then, if $x_1=0$, we deduce that $0=I(0,x_2,\ldots,x_m)=I_1^c$.
Next, we split $I_1^c=I_2+I_2^c$, where $I_2$ contains all terms of $I_1$ that have $x_2$ in them, similarly as before.
Then $0=I(x_1,0,x_3,\ldots,x_m)=I_2^c$.
Repeating this process, we deduce that all terms of $I(x_1,\ldots,x_m)$ that miss one of $x_j$, $j=1,\ldots,m$, must cancel.
Hence $I(x_1,\ldots,x_m)=c(m)x_1\cdots x_m$ for a constant $c(m)$.

This term $c(m)x_1\cdots x_m$ only appears in the sum $I$ when $\sigma=(1,\ldots,1)$.
From the multinomial formula we then see that the constant $c(m)=m!$.
\end{proof}

Using Lemma~\ref{lemma:combinatorics} we see that
\begin{equation}
D_{\eps_1,\ldots,\eps_m}^m\big|_{\eps=0} \square u_{\eps_1f_1+\ldots+\eps_m f_m}
=
-m!a v_1\cdots v_m + D_{\eps_1,\ldots,\eps_m}^m \big|_{\eps=0}\square\mathcal{R}.
\end{equation}


\subsection*{Acknowledgements}

M. L. was supported by Academy of Finland, grants 320113, 318990, and 312119. L. P-M. and T. L. were supported by the Academy of Finland (Centre of Excellence in Inverse Modeling and Imaging, grant numbers 284715 and 309963) and by the European Research Council under Horizon 2020 (ERC CoG 770924). T. T  was supported by the Academy of Finland (Centre of Excellence in Inverse Modeling and Imaging, grant number 312119).

\noindent{\footnotesize E-mail addresses:\\
Matti Lassas: {matti.lassas@helsinki.fi}\\
Tony Liimatainen: {tony.liimatainen@helsinki.fi}\\
Leyter Potenciano-Machado: {leyter.m.potenciano@jyu.fi}\\
Teemu Tyni: {teemu.tyni@helsinki.fi}
}


\begin{thebibliography}{99}



\bibitem{AF03} Adams,~R. and Fournier,~J. Sobolev spaces, \emph{Academic Press}, 2003.

\bibitem{AKKLT}
Anderson,~M., Katsuda,~A., Kurylev,~Y., Lassas,~M., and Taylor,~M. \emph{Boundary regularity for the Ricci equation, geometric convergence, and Gel'fand's inverse boundary problem.} Invent. Math. \textbf{158},  261--321, 2004. 

\bibitem{AsZh17} Assylbekov,~Y.~M. and Zhou,~T. \emph{Direct and inverse problems for the nonlinear time-harmonic Maxwell equations in Kerr-type media}, to appear in J. Spectral Theory, arXiv:1709.07767, 2017

\bibitem{AP} Astala,~K. and P\"aiv\"arinta,~L.  \emph{Calder{\'o}n's inverse conductivity problem in the plane}, Annals of Math., \textbf{163}, 265--299, 2006.


\bibitem{Be87} Belishev,~M.I. \emph{An approach to multidimensional inverse problems for the wave equation}, Dokl. Akad. Nauk SSSR297.3, 524--527, 1987.

\bibitem{BeKu92} Belishev,~M.I. and Kurylev,~Y. \emph{To the reconstruction of a Riemannian manifold via its spectral data (BC-method)}, Comm. PDE, \textbf{17}, 767--804, 1992.

\bibitem{BL76} Bergh,~J. and L\"ofstrom,~J. Interpolation spaces: an introduction, \emph{Springer-Verlag}, Berlin, 1976

\bibitem{BoKuLa17} Bosi,~R., Kurylev,~Y., and Lassas,~M. \emph{Reconstruction and stability in Gel'fand's inverse interior spectral problem}, arXiv preprint arXiv:1702.07937, 2017.


\bibitem{BK81}  Bukhgeim,~A. and Klibanov,~M. \emph{Uniqueness in the large of a class of multidimensional inverse problems}, Dokl. Akad. Nauk SSSR, \textbf{260}(2):269--272, 1981.

\bibitem{Ca80} Calder{\'o}n,~P. \emph{On  an  inverse  boundary  value  problem}, Seminar on Numerical Analysis and its Applications to Continuum Physics, Soc. Brasileira de Matem{\'a}tica, Rio de Janeiro, 65--73, 1980.

\bibitem{CaNaVa19} C\^arstea,~C.~I., Nakamura,~G., and Vashisth,~M. \emph{Reconstruction for the coefficients of a quasilinear elliptic partial differential equation}, App. Math. Letters,  \textbf{98}, 121--127, 2019.

\bibitem{Chen2019} Chen,~X., Lassas,~M., Oksanen,~L., and Paternain,~G. P. \emph{Detection of Hermitian connections in wave equations with cubic non-linearity}, arXiv:1902.05711, 2019.

\bibitem{CLOP2020}
Chen,~X., Lassas,~M., Oksanen,~L., and Paternain,~G. P. \emph{
Inverse problem for the Yang-Mills equations.} arXiv:2005.12578, 2020.

\bibitem{CB08} Choquet-Bruhat,~Y. General relativity and the Einstein equations. \emph{OUP Oxford}, 2008.

\bibitem{deHoop}
de~Hoop,~M, Kepley,~P.,  and Oksanen,~L.,
\emph{Recovery of a smooth metric via wave field and coordinate
  transformation reconstruction},
SIAM J. Appl. Math., \textbf{78}, 1931--1953, 2018.


\bibitem{dH2019}de Hoop,~M., Uhlmann,~G. and Wang,~Y. \emph{Nonlinear responses from the interaction of two progressing waves at an interface}. Annales de l'Institut Henri Poincar\'e C, Analyse non lineaire, 36(2), 347--363, 2019.

\bibitem{dH2020} de Hoop,~M., Uhlmann,~G. and Wang,~Y. \emph{Nonlinear  interaction  of  waves  in  elastodynamics  and  an  inverse problem}. Mathematische Annalen, \textbf{376}(1-2), 765--795, 2020. 


\bibitem{Esk} Eskin,~G. \emph{Inverse hyperbolic problems with time-dependent coefficients}, Commun. Partial Diff. Eqns., \textbf{32}(11), 1737--1758, 2007. 

\bibitem{FIKO} Feizmohammadi,~A., Ilmavirta,~J., Kian,~Y. and Oksanen,~L. \emph{Recovery of time dependent coefficients from boundary data for hyperbolic equations}, To appear in J. of Spectral Theory, 2020.

\bibitem{FeOk20} Feizmohammadi,~A. and Oksanen,~L. \emph{An  inverse  problem  for  a  semi-linear  elliptic  equation  in  Riemannian geometries}, J. Diff. Equations, \textbf{269}(6), 4683--4719, 2020.


\bibitem{FO}
Feizmohammadi,~A. and Oksanen, L. \emph{Recovery of zeroth order coefficients in non-linear wave equations}, To appear in J. Inst. Math. Jussieu, 2020.

\bibitem{GT13} Guillarmou,~C. and Tzou,~L. \emph{The Calder\'on inverse problem in two dimensions}, Math. Sci. Res. Inst. Publ., 60, Cambridge Univ. Press, Cambridge, 2013.

\bibitem{H99} Helgason,~S. The Radon Transform, \emph{Birkh\"auser}, Basel, 1999.

\bibitem{Helin}
Helin,~T., Lassas,~M., Oksanen,~L., and Saksala,~T. \emph{Correlation based passive imaging with a white noise source.} Journal de Mathematiques Pures et Appliquees \textbf{116}(9), 132--160, 2018.



\bibitem{HUZ20} Hintz,~P., Uhlmann,~G. and Zhai,~J. \emph{An inverse boundary value problem for a semilinear wave equation on Lorentzian manifolds}, arXiv preprint arXiv: 2005.10447, 2020.

\bibitem{IY01} Imanulov,~O. and Yamamoto,~M. \emph{Global uniqueness and stability in determining coefficients of wave equations}, Comm. PDE, \textbf{26}(7--8), 1409--1425, 2001.


\bibitem{IsSun} Isakov,~V. and Sun,~Z. \emph{Stability estimates for hyperbolic inverse problems with local boundary data. } Inverse Problems \textbf{8}(2), 193--206, 1992.

\bibitem{Isozaki} 
Isozaki,~H., Kurylev,~L., and Lassas,~M. \emph{Conic singularities, generalized scattering matrix, and inverse scattering on asymptotically hyperbolic surfaces.} Journal fur die reine und angewandte Mathematik \textbf{724}, 53--103, 2017. 

\bibitem{KKL01} Kachalov,~A.,  Kurylev,~Y., and Lassas,~M. Inverse boundary spectral problems. \emph{CRC Press}, 2001.

\bibitem{KaNa02} Kang,~H. and Nakamura,~G. \emph{Identification of nonlinearity in a conductivity equation via the Dirichlet-to-Neumann map}, Inverse Problems, \textbf{18}(4), 1079, 2002. 


\bibitem{KS14} Kenig,~C. and Salo,~M. \emph{Recent progress in the Cader\'on problem with partial data},  Contemp. Math., \textbf{615}, Amer. Math. Soc., Providence, RI, 2014.

\bibitem{KKLO}
Kian,~Y., Kurylev,~Y., Lassas,~M., and Oksanen,~L. \emph{Unique recovery of lower order coefficients for hyperbolic equations from data on disjoint sets.} J. Differential Equations \textbf{267}(4), 2210--2238, 2019.


\bibitem{KrKL}
Krupchyk,~K., Kurylev,~Y., and Lassas,~M. {\em Inverse spectral problems on a closed manifold.} Journal de Mathematique Pures et Appliquees \textbf{90}, 42--59, 2008.

\bibitem{KrUh19} Krupchyk,~K. and Uhlmann,~G. \emph{Partial  data  inverse  problems  for  semilinear  elliptic  equations  with  gradient nonlinearities}, to appear in Mathematical Research Letters, arXiv:1909.08122, 2019

\bibitem{KrUh20} Krupchyk,~K. and Uhlmann,~G. \emph{A  remark  on  partial  data  inverse  problems  for  semilinear  elliptic  equations}, Proc. Amer. Math. Soc. \textbf{148}(2), 681--685, 2020.

\bibitem{KLOU2014} Kurylev,~Y.,  Lassas,~M.,  Oksanen,~L.  and  Uhlmann,~G.  \emph{Inverse  problem  for  Einstein-scalar  field  equations}. arXiv:1406.4776, 2014

\bibitem{KLU18} Kurylev,~Y., Lassas,~M., and Uhlmann,~G.  \emph{Inverse problems for Lorentzian manifolds and non-linear hyperbolic equations.} Inventiones Mathematicae, \textbf{212}(3),781--857, 2018. \href{https://doi.org/10.1007/s00222-017-0780-y}{https://doi.org/10.1007/s00222-017-0780-y}


\bibitem{KOP}
Kurylev,~Y., Oksanen,~L., and Paternain,~G., \emph{Inverse problems for the connection Laplacian}, J. Differential Geom. \textbf{110}(3), 457--494, 2018. 

\bibitem{LaUhYa20} Lai,~R-Y., Uhlmann,~G., and Yang,~Y. \emph{ Reconstruction of the collision kernel in the nonlinear boltzmann equation}, arXiv preprint arXiv:2003.09549, 2020.

\bibitem{LLT86} Lasiecka,~I., Lions,~J-L., and Triggiani,~R. \emph{Non homogeneous boundary value problems for second order hyperbolic operators.} Journal de Mathématiques pures et Appliquées 65.2, 149--192, 1986.


\bibitem{Lassas}
Lassas,~M. \emph{Inverse problems for linear and non-linear hyperbolic equations.} Proc. Int. Congress of Math. ICM 2018, Rio de Janeiro, Brazil, Vol III, 3739--3760, 2018.

\bibitem{LLLS19a} Lassas,~M., Liimatainen,~T., Lin,~Y-H., and Salo,~M. \emph{Partial data inverse problems and simultaneous recovery of boundary and coefficients for semilinear elliptic equations}, arXiv preprint arXiv:1905.02764, 2019.

\bibitem{LLLS19b} Lassas,~M., Liimatainen,~T., Lin,~Y-H. and Salo,~M. \emph{Inverse problems for elliptic equations with power type nonlinearities}, arXiv preprint arXiv:1903.12562.

\bibitem{LO}
Lassas,~M. and Oksanen,~L., \emph{Inverse problem for the Riemannian wave equation with Dirichlet data and Neumann data on disjoint sets.} Duke Math. J. \textbf{163}, 1071--1103, 2014.
 
\bibitem{LUW17}  Lassas,~M., Uhlmann,~G.  and Wang,~Y.  \emph{Determination of vacuum space-times from the Einstein-Maxwell equations}, arXiv preprint arXiv:1703.10704, 2017.

\bibitem{LUW18} Lassas,~M., Uhlmann,~G.  and Wang,~Y. \emph{Inverse Problems for Semilinear Wave Equations on Lorentzian Manifolds}, Comm. in Math. Phys., \textbf{360}, 555--609, 2018.


\bibitem{Siltanen2}
Mueller J. and Siltanen S. {\em Linear and Nonlinear
  Inverse Problems with Practical Applications}, SIAM, Philadelphia, 2012.

\bibitem{N96} Nachman,~A. \emph{Global uniqueness for a two-dimensional inverse boundary value problem}, Ann. of Math. \textbf{143}(2), 71--96, 1996.  


\bibitem{NSU88} Nachman,~A., Sylvester,~J. and Uhlmann,~G. \emph{An n-dimensional Borg- Levinson theorem}, Comm. Math. Phys. \textbf{115}(4), 595--605, 1988.

\bibitem{Na} Natterer,~F. The Mathematics of Computerized Tomography, \emph{Society for Industrial and Applied Mathematics (SIAM)}, Philadelphia, 2001.

 \bibitem{OSSU}
Oksanen,~L., Salo,~M. Stefanov,~P., and Uhlmann,~G., \emph{Inverse problems for real principal type operators},arXiv preprint arXiv:2001.07599.

\bibitem{Siltanen1}
Siltanen,~S., Mueller,~J. and Isaacson,~D. \emph{An implementation of the
  reconstruction algorithm of {A}. {N}achman for the 2-{D} inverse conductivity
  problem}, Inverse Problems, \textbf{6}, 681--699, 2000.

\bibitem{Sogge} Sogge,~C.~D., Lectures on nonlinear wave equations, \emph{International Press Inc.}, 1995.

\bibitem{St}
Stefanov,~P. \emph{Inverse scattering problem for the wave equation with time-dependent potential}, J. Math. Anal. Appl., \textbf{140}, 351--362, 1989.

\bibitem{StUh} Stefanov,~P. and Uhlmann,~G. \emph{Stable Determination of Generic Simple Metrics from the Hyperbolic Dirichlet-to-Neumann Map}, Int. Math. Res. Not., \textbf{17}, 1047--1061, 2005.

\bibitem{SY}
Stefanov,~P. and Yang,~Y. \emph{The inverse problem for the Dirichlet-to-Neumann map on Lorentzian manifolds}. Anal. PDE., \textbf{11}(6), 1381--1414, 2018.

\bibitem{SunUh97} Sun,~Z. and Uhlmann,~G. \emph{Inverse problems in quasilinear anisotropic media}, American J. Math. \textbf{119}(4), 771--797, 1997.

\bibitem{SyUh87} Sylvester,~J. and Uhlmann,~G. \emph{A Global Uniqueness Theorem for an Inverse Boundary Value Problem}, Annals of Math. \textbf{125}(1), 153--169, 1987. 

\bibitem{Uh13} Uhlmann,~G. \emph{30 Years of Calder\'on's Problem},  S\'emin. \'Equ. D\'eriv. Partielles, \'Ecole Polytech., Palaiseau, 2014. 

\bibitem{UhWa18} Uhlmann,~G. and Wang,~Y. \emph{Determination of space-time structures from gravitational perturbations}, To appear in Comm. Pure App. Math. (2018).

\bibitem{WZ2019} Wang,~Y. and  Zhou,~T. \emph{Inverse problems for quadratic derivative nonlinear wave equations}. Comm. PDE, \textbf{44}(11), 1140--1158, 2019.

\bibitem{Ze95} Zeidler,~E. Applied functional analysis, \emph{Springer-Verlag}, New York, 1995.

\end{thebibliography}
\end{document}